\setlist[enumerate,1]{label=(\arabic*)}
\setlist[enumerate,3]{label=(\roman*)}
\renewcommand{\phi}{\varphi}
\newtheorem{thm}{Theorem}[section]
\newtheorem{lem}[thm]{Lemma}
\newtheorem{cor}[thm]{Corollary}
\theoremstyle{definition}
\newtheorem{defn}[thm]{Definition}
\newtheorem*{note}{Note}
\newtheorem*{nota}{Notation}
\newcommand{\restricted}{\upharpoonright}
\newcommand{\ang}[1]{\langle#1\rangle}
\renewcommand{\phi}{\varphi}
\newcommand{\Lat}{\mathscr{L}}
\newcommand{\forces}{\Vdash}
\renewcommand{\a}{\textbf{a}}
\renewcommand{\b}{\textbf{b}}
\renewcommand{\c}{\textbf{c}}
\renewcommand{\d}{\textbf{d}}
\newcommand{\e}{\textbf{e}}
\newcommand{\Deg}{\mathscr{D}}
\newcommand{\M}{\mathcal{M}}
\renewcommand{\L}{\mathcal{L}}
\renewcommand{\P}{\mathcal{P}}
\newcommand{\Q}{\mathcal{Q}}
\newcommand{\condition}[1]{(\Phi_{#1},\X_{#1})}
\newcommand{\btau}{\boldsymbol{\tau}}
\newcommand{\X}{\textbf{X}}
\newcommand{\0}{\underline{0}}
\newcommand{\1}{\underline{1}}
\newcommand{\2}{\underline{2}}
\newcommand{\N}{\mathcal{N}}
\newcommand{\D}{\mathcal{D}}
\newcommand{\generic}{\Phi_{\texttt{G}}}
\newcommand{\infinitaryL}{\L^r_{\omega_1,\omega}(C)}
\newcommand{\infinitaryLB}{\L^r_{\omega_1,\omega}(B\oplus C)}
\newcommand{\T}{T(\Phi_0,\psi,k)}
\newcommand{\U}{U(\Phi_0,\ang{e,\alpha},k)}
\newcommand{\TPC}{T_{\P,C}(\Phi_0,\psi,k)}
\newcommand{\UPC}{U_{\P,C}(\Phi_0,\ang{e,\alpha},k)}
\newcommand{\TQC}{T_{\Q,B\oplus C}(\Phi_0,\psi,k)}
\newcommand{\UQC}{U_{\Q,B\oplus C}(\Phi_0,\ang{e,\alpha},k)}
\renewcommand{\u}{\vec{u}}
\newcommand{\m}{\vec{m}}
\newcommand{\umtf}{use-monotone Turing functional }
\newcommand{\umtfs}{use-monotone Turing functionals }
\title{On the decidability of the $\Sigma_2$ theories of the arithmetic and hyperarithmetic degrees as uppersemilattices}
\author{James Barnes}
\date{}
\begin{document}

\maketitle

\begin{abstract}
	We establish the decidability of the $\Sigma_2$ theory of both the arithmetic and hyperarithmetic degrees in the language of uppersemilattices i.e. the language with $\leq, 0$ and $\sqcup$. This is achieved by using Kumabe-Slaman forcing - along with other known results - to show that given finite uppersemilattices $\M$ and $\N$, where $\M$ is a subuppersemilattice of $\N$, then for both degree structures, every embedding of $\M$ into the structure extends to one of $\N$ iff $\N$ is an end-extension of $\M$.
\end{abstract}

\section*{Introduction}

	Given a transitive notion $r$ of reducibility between subsets of $\omega$ (we write $X \leq_r Y$ to mean $X$ is $r$-reducible to $Y$), the $r$ degrees (denoted $\Deg_r$) is the quotient of $2^\omega$ under the equivalence relation $\equiv_r$ of mutual reducibility: $X \equiv_r Y$ iff $X \leq_r Y$ and $Y \leq_r X$. The relation $\leq_r$ induces a partial order on $\Deg_r$, and given $X\subseteq \omega$, we call its equivalence class in $\Deg_r$ its $r$ degree. The primary such notion is Turing reducibility: $X\leq_T Y$ if there is a Turing machine with oracle $Y$ that computes the characteristic function of $X$. 

	Two related notions are arithmetic and hyperarithmetic reducibility (denoted $\leq_a$ and $\leq_h$ respectively). We say $X$ is arithmetic in $Y$ if $X$ is Turing reducible to the $n$th jump of $Y$ for some $n\in\omega$ (the jump $Y'$ of $Y$ is the set $\{e : \{e\}^Y(e)\downarrow\}$, the $n$th jump is the $n$th iterate of the jump operator). Equivalently, $X$ is arithmetic in $Y$ if is definable in first-order arithmetic with a predicate for membership in $Y$. 	$X$ is hyperarithmetic in $Y$ if $X$ is Turing reducible to the $\alpha$th jump of $Y$ for some $Y$-recursive ordinal $\alpha$ (at limit levels one takes effective unions). Equivalently, $X$ is hyperarithmetic in $Y$ if $X$ is $\Delta_1^1$ in $Y$ in second-order arithmetic. 

	The three degree structures $\Deg_T,\Deg_a,\Deg_h$ all have a least element: the degree of the empty set. Furthermore, each has a natural join operator induced by the operation $\oplus$ on $2^\omega$, which is defined by	
		\begin{align*}
			X\oplus Y = \{2n : n\in X\}\cup \{2n+1 : n\in Y\}.
		\end{align*}
	It is easy to check that the degree of $X\oplus Y$ depends only on the degree of $X$ and $Y$, and that it is their least upperbound. The reducibility, the $0$ degree, and the join operator endow each of these classes of degrees with the structure of an uppersemilattice with $0$. (We abbreviate this as USL).

\section{Preliminaries}

	A major project in recursion theory has been to classify the dividing line between the decidable and undecidable fragments of the theories of degree structures in various languages. In this paper we extend the result of Jockusch and Slaman\cite{JockuschSlaman1993} that the $\Sigma_2$ theory of $\Deg_T$ as a USL is decidable, to $\Deg_a$ and $\Deg_h$. This is sharp, in the sense that the $\Sigma_3$ theory of all three structures is undecidable, even with only $\leq$ in the language. This follows for any USL $\Lat$ into which every finite USL embeds as an initial segment. The result is originally due to Schmerl for $\Deg_T$, see Lerman - Thm VII.4.5\cite{Lerman1983} for a proof. For the initial segment results in $\Deg_a$ and $\Deg_h$, see Simpson\cite{Simpson1985}, and Kjos-Hanssen and Shore\cite{KjosHanssenShore2010} respectively.
	
	Our approach follows that of Jockusch and Slaman, who showed that given a USL $\Lat$, the $\Sigma_2$ theory of $\Lat$ is decidable if the following three facts obtain:
		\begin{enumerate}
			\item Every finite USL embeds as an initial segment of $\Lat$.
			\item For every $x\in \Lat$ and $n>0$ there are $y_0,\ldots,y_n\in \Lat$ which are mutually incomparable over $x$ (i.e. for each $i=0,\ldots, n$ the points $y_i$ and $x\sqcup \bigsqcup_{j\neq i} y_j$ are incomparable in $\Lat$).
			\item For any $a,b,c_i,d_i,e_i\in \Lat$,
					\[
					\left[\bigwedge_{i\in\omega} d_i \nleq c_i \, \&\, (a\nleq c_i\text{ or } d_i \nleq c_i \sqcup b)\right] \rightarrow 
					(\exists g)\left[b \leq a\sqcup g \, \& \bigwedge_{i\in\omega} d_i \nleq c_i \sqcup g \, \&  \bigwedge_{i\in\omega} g\nleq e_i\right]
					\label{goal}
					\]
		\end{enumerate}

	We call (3) the extended Posner-Robinson theorem for $\Lat$ (for the decidability result, one actually only needs the above to hold for arbitrarily long finite conjunctions, which is slightly weaker than allowing countable conjunctions).
	
	The analogues of (1) and (2) both hold in $\Deg_a$ and $\Deg_h$. Citations for (1) were given above, (2) follows by taking the columns of a sufficiently generic Cohen real (relativied to a suitable degree). (See Odifreddi\cite{Odifreddi1983} for an exposition of arithmetical Cohen forcing, and Sacks IV.3 \cite{Sacks1990} for hyperarithmetical Cohen forcing). Thus, the question of the decidability of the $\Sigma_2$ theory of $\Deg_a$ and $\Deg_h$ can be resolved by establishing the extended Posner-Robinson theorem for these structures. The rest of this paper is devoted to showing that this is indeed the case.
	
	Our method is a forcing construction, however, is different from that of Jockush and Slaman. They devise a forcing, depending on representatives $A\in \a$ and $B\in \b$, coding $B(n)$ into the top element of the $n$th column of their generic $g$. This renders $B \leq_T g'$, and so $B$ is both arithmetic and hyperarithmetic in $g$. If there are (arithmetic or hyperarithmetic) degrees $\c_i,\d_i$ such that $\d_i \leq \c_i \sqcup \b$ and $\a\nleq \c_i$, then $ d_i \leq \c_i \sqcup g$, and so such a forcing does not suffice in our settings.
		
	Instead, we use Kumabe-Slaman forcing, in particular, the presentation given in Shore and Slaman\cite{ShoreSlaman1999}. In the hyperarithmetic setting we also have the added concern of preserving $\omega_1^{\c_i}$ for each $\c_i$.
	
\section{The notion and language of forcing}

	\begin{defn}[Turing functionals]
		For $x\in \omega, y\in\{0,1\}$, and $\sigma\in 2^{<\omega}$, we call the tuple $\ang{x,y,\sigma}$ an \textbf{axiom} or a \textbf{computation}. A \textbf{Turing functional} is a set $\Phi$ of computations such that, if $\sigma_1$ and $\sigma_2$ are compatible, and $\ang{x,y_1,\sigma_1},\ang{x,y_2,\sigma_2}\in \Phi$, then $y_1 = y_2$ and $\sigma_1 = \sigma_2$.
		\\
		Furthermore, $\Phi$ is \textbf{use-monotone} if,
			\begin{itemize}
				\item whenever $\sigma_1 \subsetneq \sigma_2$ and $\ang{x_1,y_1,\sigma_1},\ang{x_2,y_2,\sigma_2}\in \Phi$, then $x_1<x_2$, and
				\item whenever $x_1<x_2$ and $\ang{x_2,y_2,\sigma_2} \in \Phi$, there is a $y_1$ and a $\sigma_1\subsetneq \sigma_2$ such that $\ang{x_1,y_1,\sigma_1}\in \Phi$.
			\end{itemize}
		\label{deftftnl}
	\end{defn}
			
	\begin{nota}
		We write $\Phi(x,\sigma)=y$ to mean there is a $\tau\subseteq \sigma$ such that $\ang{x,y,\tau}\in \Phi$. If $X\subseteq \omega$, we write $\Phi(x,X)=y$ to mean there is an $n$ such that $\Phi(x,X\restricted n) = y$, and write $\Phi(X)$ for the (partial) function evaluated in this way. Note that $\Phi(X)$ is (uniformly) recursive in the join of $X$ and $\Phi$.
	\end{nota}
	
	\begin{defn}[Kumabe-Slaman forcing]
		\textbf{Kumabe-Slaman forcing} is the partial order  $\mathcal{P}$, with conditions $p = (\Phi_p,\X_p)$, where $\Phi_p$ is a finite use-monotone Turing functional and $\X_p$ is a finite set of subsets of $\omega$ (hereafter we refer to such $X\subseteq \omega$ as \textbf{reals}). For elements $p,q$ of $\mathcal{P}$, we say $q \leq_\P p$ (\textbf{$q$ extends $p$}) if
			\begin{itemize}
				\item $\Phi_p \subseteq \Phi_q$ and for all $(x_q,y_q,\sigma_q)\in \Phi_q\setminus \Phi_p$ and all $\ang{x_p, y_p, \sigma_p} \in \Phi_p$ the length of $\sigma_q$ is greater than the length of $\sigma_p$.
				\item $\X_p \subseteq \X_q$.
				\item For every $x,y,$ and $X\in\X_p$, if $\Phi_q(X)(x)=y$, then $\Phi_p(X)(x)=y$ i.e. $q$ can't add new axioms $\ang{x,y,\sigma}$ where $\sigma\subset X$ for any $X\in \X_p$.
			\end{itemize}
		\label{ksforcing}
	\end{defn}
		
	We think of a forcing condition $p=\condition{p}$ as approximating a (possibly nonrecursive) Turing functional $\Phi_G$. To a descending sequence of conditions $(p_n)_{n=1}^\infty$ we associate the object $\Phi_G = \bigcup_n \Phi_{p_n}$, a use-monotone Turing functional.
			
	On a formal level, we code each finite use-monotone Turing functional $\Phi$ as a natural number, which encodes each element of $\Phi$ and also the size of $\Phi$. For instance, given some recursive encoding of triples $\ang{x,y,\sigma}$ as positive integers, we could code $\Phi = \{\ang{x_1,y_1,\sigma_1},\ldots,\ang{x_n,y_n,\sigma_n}\}$ as the product $\prod_{i=1}^n P(i)^{\ang{x_i,y_i,\sigma_i}}$ where $P(i)$ is the $i$th prime. Consequently, a forcing condition is coded as a pair, the first coordinate is a natural number and the second a finite set of reals. We dub the first coordinate the \textbf{finite part}, and the second the \textbf{infinite part}. Given finite \umtfs $\Phi_p$ and $\Phi_q$, we will occasionally abuse notation and write $\Phi_p\leq_\P \Phi_q$, by which we mean $(\Phi_p,\emptyset)\leq_\P (\Phi_q,\emptyset)$.
	
	We will often have occasion to modify our notion of forcing somewhat, either by insisting that certain axioms don't belong to the finite part, or the reals can only come from a particular subclass of $2^\omega$. For the first modification we introduce some notation:
	\begin{nota}
		Given reals $A$ and $B$ (think of representatives from $\a$ and $\b$ respectively), we say that a Turing functional $\Phi$ \textbf{partially computes $B$ on input $A$} if whenever $\Phi(A)(n)$ is defined, it equals $B(n)$. More explicitly, if $\ang{x,y,\sigma}\in \Phi$ is an axiom such that $\sigma \subseteq A$ (we say this axiom \textbf{applies to $A$}), then $y = B(x)$.	
	\end{nota} 
	
	\begin{defn}[Restricted Kumabe-Slaman forcing]
		$\Q_{A,B}$ is the subset of $\P$ containing all those conditions $q=\condition{q}$ such that $\Phi_q$ partially computes $B$ on input $A$, and $\X_q$ does not contain $A$. Extension in $\Q_{A,B}$ (denoted $\leq_{\Q_{A,B}}$) is defined as for $\P$. The partial order $\Q_{A,B}$ is a suborder of $\P$, and will be abbreviated as $\Q$ if it is understood what $A$ and $B$ are. We employ a similar convention as for $\P$ and write $\Phi_q \leq_\Q \Phi_r$ to mean $(\Phi_q,\emptyset)\leq_\Q (\Phi_r,\emptyset)$.
		\label{restksforcing}
	\end{defn}

	Our main goal is to construct a sufficiently generic sequence of conditions into which we code $B$ via $A$. More precisely, for each $n\in\omega$, our generic will have an axiom $(n,B(n),\alpha)$ where $\alpha$ is an initial segment of $A$. This will ensure that $\Phi_G(A)$ is the characteristic function of $B$, and so $B\leq_T \Phi_G \oplus A$. We will need to show that we can construct a generic sequence without adding "incorrect" axioms applying to $A$, and without adding $A$ into the infinite part of any condition (which would prevent us from adding any axioms about $A$ later).
	
	The forcing $\Q_{A,B}$ has all this hard-wired in, however, it is at the expense of checking whether $\Phi$ can serve as the finite part of the condition being $A\oplus B$ recursive, instead of plain old recursive. For $\P$, we need to show that we can manually avoid adding in bad axioms or $A$ into the infinite part of a condition, while still constructing a generic sequence.
	
	We ensure that $\d_i \nleq \c_i \sqcup g$ via genericity. At a high level, our method is quite standard, we will define a forcing language that will be powerful enough to express all the possible pertinent reductions from $C_i\oplus \Phi_G$ to $D_i$ (for chosen representatives $C_i\in \c_i$ and $D_i\in \d_i$).  We will try to diagonalize against each of these reductions in turn. The result for the arithmetic degrees is a direct corollary of our methods for the hyperarithmetic degrees, so we need not treat them separately.
	
	For forcing in the arithmetic or Turing degrees, a forcing language based upon first-order arithmetic often suffices. However, for the hyperarithmetic setting something more expressive is required. We use recursive infinitary formulas as presented in Ash and Knight chapter 9 \cite{AshKnight2000} (over the language of arithmetic, see Shore\cite{Shore2015} for an exposition). Roughly speaking, at the ground level we have the quantifier-free finitary sentences in our language, and then, for each recursive ordinal $\alpha$, a $\Sigma_\alpha^r$ formula is an r.e. disjunction of the form
		\[
			\bigvee_i (\exists \vec{u}_i) \psi_i(\vec{u}_i,\vec{x})
		\]
	where each $\psi_i$ is a $\Pi_{\beta_i}^r$ formula for some $\beta_i<\alpha$. Similarly a $\Pi_\alpha^r$ formula is an r.e. conjunction of universally quantified formulas all of which are $\Sigma_{\beta_i}^r$ for some $\beta_i < \alpha$.
	
	\begin{defn}[The language of forcing]
		We start with $\L$, the set containing the following nonlogical symbols:
			\begin{itemize}
				\item Numerals: $\0,\1,\2,\ldots$.
				\item A 5-ary relation for the universal recursive predicate: $\phi(e,x,s,y,\sigma)$ (to be interpreted as, the $e$th Turing machine on input $x$ running for $s$ steps with oracle $\sigma$ halts, and outputs $y$, we will often write this as $\{e\}_s^\sigma(x)\downarrow = y$, and write $\{e\}^X(x)$ for the partial function evaluated this way).
				\item A predicate for a string being an initial segment of our generic: $\sigma\subset\generic$
			\end{itemize}
		From $\L$ we build up the recursive infinitary formulas as indicated in Ash and Knight, with the convention that the ground level formulas are in CNF, and denote this collection $\L^r_{\omega_1,\omega}$.
		
		To relativise to some $S\subseteq \omega$, we add a predicate (written $\sigma\subset \texttt{S}$) for a string being an initial segment of $S$, and build up the $S$-recursive infinitary formulas i.e. we allow $S$-r.e. infinitary conjunctions and disjunctions. We denote the collection of such formulas $\L^r_{\omega_1,\omega}(S)$ and rank the formulas as $\Sigma_\alpha^r(S)$ and $\Pi_\alpha^r(S)$ for $\alpha < \omega_1^S$. 
		\label{deflang}
	\end{defn}
	
	We think of numerals as having a dual role in our language: they represent both numbers (in the obvious way) and also strings via some fixed recursive bijection between $2^{<\omega}$ and $\omega$. We will suppress the technical apparatus needed, and will use lowercase Roman letters when we are thinking in terms of numbers, and lowercase Greek letters when we are thinking in terms of strings.
	
	We can now define the forcing relation $\forces_\P$ for $\P$, with some $S\subseteq\omega$ as our set parameter.
	
	\begin{defn}[The forcing relation]
		The relation $\forces_\P$ between elements of $\P$ and sentences of $\L_{\omega_1,\omega}^r(S)$ is defined by induction on the ordinal rank and complexity of the formula.
		\begin{enumerate}
			\item $p \forces_\P \underline{n} = \underline{m}$ iff $n = m$.
			\item $p\forces_\P \phi(\underline{e},\underline{x},\underline{s},\underline{y},\underline{\sigma})$ iff $\{e\}_s^\sigma(x)\downarrow=y$.
			\item $p\forces_\P \underline{\sigma} \subset \texttt{S}$ iff $\sigma\subset S$.
			\item $p\forces_\P \underline{\sigma} \subset \generic$ iff for all $n$ less than $|\sigma|$:
				\begin{enumerate}
					\item If $\sigma(n) = 1$, then $n \in \Phi_p$.
					\item If $\sigma(n) = 0$, then either $n$ is not of the form $\ang{x,y,\tau}$, or $n=\ang{x,y,\tau}$, $n\notin \Phi_p$ and
						\begin{enumerate}
							\item there is an $\ang{x_0,y_0,\tau_0}\in \Phi_p$ such that $|\sigma_0|$ is greater than $|\sigma|$, or, $x_0$ is greater than $x$ and $\sigma_0$ is compatible with $\sigma$, or
							\item $\sigma$ is an initial segment of one of the elements of $\X_p$.
						\end{enumerate}
				\end{enumerate}
			\item For an atomic sentence $\psi$, $p\forces_\P \neg \psi$ iff there is no $q\leq_\P p$ with $q\forces \psi$.
			\item For sentences $\psi_1,\ldots,\psi_n$ that are literals, $p\forces_\P \psi_1 \lor \cdots \lor\psi_n$ iff for all $q\leq_\P p$ there exists an $r\leq_\P q$ and an $i\in\{1,\ldots,n\}$ such that $r \forces \psi_i$.
			\item For sentences $\psi_1,\ldots,\psi_n$ that are finite disjunctions of literals, $p\forces_\P \psi_1 \land\cdots \land \psi_n$ iff $p\forces \psi_i$ for each $i\in\{1,\ldots,n\}$.
			\item $p \forces_\P \bigvee\limits_i (\exists \vec{u}_i) \psi_i(\vec{u_i})$ iff there is some $i$ and some $\vec{n}$ such that $p\forces_\P \psi_i(\underline{\vec{n}})$.
			\item $p \forces_\P \bigwedge\limits_i (\forall \vec{u}_i) \psi_i(\vec{u_i})$ iff for each $i,\vec{n}$ and $q\leq_\P p$ there is some $r\leq_\P q$ such that $r\forces \psi_i(\underline{\vec{n}})$.
		\end{enumerate}
	\end{defn}
	
	\begin{note}
		We note some abuses of the language of forcing which we will employ hereafter. Firstly, we will stop underlining numerals.  Secondly, although our language only has negations of atomic formulas, there is an $S$-recursive function neg that takes (an index for) a formula $\psi$ and returns (an index for) a formula that is logically equivalent to the negation of $\psi$. Also, neg preserves the ordinal rank of the formula i.e. if $\psi$ is $\Pi^r_\alpha(S)$, then neg$(\psi)$ is $\Sigma^r_\alpha(S)$, and visa versa. Consequently, we write $\neg\psi$ freely as if negation were a part of the language.
	\end{note}
	
	As expected, it can be shown that, for each $S$-recursive ordinal $\alpha$, the claim that the partial function $\{e\}^{(S\oplus \Phi_G)^{(\alpha)}}$ is defined on an input $x$ (and, optionally, has value $y$) can be expressed as a $\Sigma_{\alpha+1}^r$ sentence in our language, uniformly in $e,x$ and $y$.
	
	The definition of $\forces_\Q$ is analogous except in clause 4. In particular we must add a third possibility to (b), namely, that $\tau \subset A$ but $B(x)\neq y$.
	
	Standard lemmas include that extension preserves forcing, no condition forces a sentence and its (formal) negation, and given a sentence and a condition, we can find an extension of that condition deciding the sentence.
	
	The countability of $\L^r_{\omega_1,\omega}(S)$ implies that generic sequences exist. To a generic sequence $\{p_n\}$ we associate the generic object $\Phi_G= \bigcup \Phi_{p_n}$, which is the intended interpretation of $\generic$. With this, one proves the final standard lemma: truth equals forcing. Proofs of these standard lemmas can be found in chapter 10 of Ash and Knight.
	
	Now a technical lemma about the complexity of the forcing relation.
	
	\begin{lem}
		Given a finite \umtf $\Phi_0$, and a quantifier-free finitary sentence $\psi$ in $\L^r_{\omega_1,\omega}(S)$ (i.e. a $\Pi_0^r$ sentence), we can $S$-recursively decide whether there is an $\X$ such that $(\Phi_0,\X)\in \P$ forces $\psi$, uniformly in both the sentence and the functional. 
		
		Furthermore, we can also make this decision for $\Q_{A,B}$ (and $\forces_\Q$), however, the procedure is uniformly recursive in $A\oplus B \oplus S$.
		\label{quantifierfreeforcing}
	\end{lem}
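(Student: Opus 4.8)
The plan is to show that, for a quantifier-free $\psi$, whether $(\Phi_0,\X)$ forces $\psi$ depends on $\X$ only through a finite, computably determined amount of combinatorial data, so that ``$\exists\X\ (\Phi_0,\X)\forces_\P\psi$'' becomes a finite search. I would begin with a syntactic reduction. As $\psi$ is in CNF, write $\psi=\bigwedge_j D_j$ with each $D_j$ a disjunction of literals; by clause (7), $(\Phi_0,\X)\forces_\P\psi$ iff $(\Phi_0,\X)\forces_\P D_j$ for each $j$. The atoms $\underline n=\underline m$, $\phi(\cdots)$ and $\underline\sigma\subset\texttt{S}$ have a $\forces_\P$-value independent of the condition and decidable ($S$-decidable in the last case) by clauses (1)--(3), and likewise for their negations by clause (5). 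So one deletes every clause containing such a literal that is forced (that clause is then forced outright), deletes from the remaining clauses every such literal that is not forced (by clause (6) it can never help), returns ``no'' if a clause is thereby emptied, and returns ``yes'' with $\X=\emptyset$ if no clauses remain. This leaves clauses that are nonempty disjunctions of literals $\underline\sigma\subset\generic$ and $\neg(\underline\sigma\subset\generic)$.

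Next I would isolate the finite data. Let $\mathrm{Str}^*$ be the finite, computable set of all initial segments of the strings $\sigma$ occurring in type-4 atoms of $\psi$ together with all $\tau$ such that $\ang{x,y,\tau}<|\sigma|$ for some such $\sigma$ and some $x,y$ (finite, as $\ang{x,y,\tau}<|\sigma|$ bounds $\tau$), closed under initial segments, and let $L$ bound the lengths of its members and of the use-strings of $\Phi_0$. Define the \emph{state} of a condition $q$ to consist of: (i) the axioms of $\Phi_q$ whose use-string has length $\le L$ (by use-monotonicity a \umtf has at most one axiom with a given use-string, hence finitely many of use-length $\le L$); (ii) for each $\ell\le L$, the bit ``$\Phi_q$ has an axiom of use-length $>\ell$''; and (iii) the set of $\rho\in\mathrm{Str}^*$ having an extension in $\X_q$. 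There are finitely many states, computably. Clause (4) shows that for $\sigma$ occurring in $\psi$ whether $q\forces_\P\underline\sigma\subset\generic$ is a function of $\mathrm{state}(q)$: the codes $<|\sigma|$ that must lie in $\Phi_q$ and the ``dominating axioms'' of alternative 4(b)(i) are visible in (i)--(ii), and alternative 4(b)(ii) is visible in (iii). I would then verify that the set of states of conditions $r\le_\P q$ is computable from $\mathrm{state}(q)$ and that every such formally reachable state is realised by a genuine extension --- here one uses that whether a relevant code (with its finitely many forced companions) can be adjoined is governed entirely by (i)--(iii), the operative constraints being the length rule, Turing/use-monotone consistency with the visible axioms, and, for the third clause of $\le_\P$, precisely whether its use-string lies in the shadow (iii); and that a long fresh axiom can always be adjoined to flip the bits in (ii). Transcribing clauses (1)--(9) to states --- each quantifier over extensions becoming one over the finitely many formally reachable states --- yields a decidable relation $\forces^*$, and I would prove by induction on ordinal rank and complexity (it suffices to treat quantifier-free $\chi$, i.e.\ subformulas of $\psi$ and their negations) that $q\forces_\P\chi$ iff $\mathrm{state}(q)\forces^*\chi$.

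The procedure is then: the realisable shadows are exactly the initial-segment-closed subsets of $\mathrm{Str}^*$, finitely many; for each, form the state $s$ whose components (i),(ii) are read off $\Phi_0$ and whose (iii) is that shadow, and test $s\forces^*\psi$; there is an $\X$ with $(\Phi_0,\X)\forces_\P\psi$ iff one of these tests succeeds. This is $S$-recursive and uniform, the only non-recursive ingredient being the $S$-decidability of the type-3 atoms. For $\Q_{A,B}$ the same argument applies, with clause (4) for $\forces_\Q$ gaining the extra alternative in 4(b) that $\tau\subset A$ and $B(x)\ne y$; evaluating this, and checking that the axioms of a condition are admissible for $\Q$ (that $\Phi_q$ partially computes $B$ on $A$), requires $A\oplus B$, and one must also keep $A$ out of $\X_q$ --- harmless, since any initial-segment-closed shadow on $\mathrm{Str}^*$ is realised by some $\X$ avoiding $A$. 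Hence that procedure is recursive in $A\oplus B\oplus S$, uniformly.

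The main obstacle is the state construction: arranging the abstraction to be simultaneously finite and faithful, i.e.\ justifying that the universal quantifier over all extensions $q\le_\P p$ in clauses (5), (6), (9) may be replaced without loss by a quantifier over the finitely many formally reachable states. This hinges on the two boundedness facts used above --- that the codes relevant to $\psi$ have bounded use-strings, and that a \umtf has only boundedly many axioms with use-string below a fixed length --- which confine the ``$\psi$-relevant part'' of an arbitrary extension's finite part to a finite, computable set.
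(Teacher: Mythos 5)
You leave the pivotal step---replacing the universal quantifier over all extensions $q\leq_\P p$ in clauses (5) and (6) by a quantifier over ``formally reachable states,'' with every formally reachable state realized by a genuine extension---as a sketch, and that step is where essentially all of the content of the lemma lives. Worse, one of the concrete claims you do commit to is false: the realizable ``shadows'' are \emph{not} all of the initial-segment-closed subsets of $\mathrm{Str}^*$. If $\mathrm{Str}^*$ contains both one-bit extensions of some $\rho$ (which happens as soon as the codes below some $|\sigma|$ supply both $\rho^\frown 0$ and $\rho^\frown 1$ as use-strings), then every real extending $\rho$ meets one of them; so, for instance, when $\mathrm{Str}^*$ contains the empty string, $0$, $00$ and $01$, the initial-segment-closed set consisting of the empty string and $0$ alone is the shadow of no finite set of reals. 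Since your final test ranges over all initial-segment-closed subsets, it can answer ``yes'' on the strength of a state whose infinite part cannot exist, and the same phantom states infect the reachable-state computation and hence the induction ``$q\forces_\P\chi$ iff $\mathrm{state}(q)\forces^*\chi$'' at the negation and disjunction clauses, where they change the value of the universal quantifier. To repair this you need either a correct characterization of which shadows and shadow-increments finite sets of reals can realize, or a monotonicity argument (enlarging $\X$ preserves forcing, since $(\Phi_0,\X\cup\X')\leq_\P(\Phi_0,\X)$ and extension preserves forcing) showing the overshoot is harmless; neither appears. Likewise, the claim that adjoining a ``relevant code'' is governed by (i)--(iii) hides real combinatorics: use-monotonicity forces companion axioms for every smaller input, with use-strings properly initial in $\tau$, longer than every old use-string, and off the shadow, so for example $\ang{5,y,\tau}$ with $|\tau|=3$ can never be added to the empty functional. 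All of this is plausibly determined by your state and computable, but none of it is verified, and you yourself flag it as the main obstacle.

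By contrast, the paper resolves the existential over $\X$ with no state machinery at all. It characterizes, literal by literal, when some $\X$ makes $(\Phi_0,\X)$ force it: for $\sigma\subset\generic$, exactly when $\Phi_0$ (as a set of codes) agrees with $\sigma$ on its domain, the blocking reals being chosen to extend the use-strings of the offending codes; for $\neg(\sigma\subset\generic)$, exactly when they disagree somewhere. A disjunction is then handled by observing that if no disjunct is forceable over $\Phi_0$, the finitely many witnessing sets $\X_i$ forcing the opposite literals can be unioned onto \emph{any} $\X$, producing an extension of $(\Phi_0,\X)$ forcing the negation of every literal, so no $\X$ forces the disjunction; conjunctions are handled by taking unions of witnesses. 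These per-literal facts are exactly the monotonicity data your abstraction would need anyway, so the state apparatus buys nothing here while introducing the gaps above.
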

	
	\begin{proof}
		We begin with the claim for $\P$. By convention, finitary sentences are in conjunctive normal form. There are four types of atomic sentences
			\begin{enumerate}[label=(\arabic*)]
				\item $n= m$
				\item $\varphi(e,x,s,y,\sigma)$
				\item $\sigma \subset \texttt{S}$
				\item $\sigma \subset \Phi_\texttt{G}$
			\end{enumerate}
		then there are the negations of these atoms, finite disjunctions of such literals, and finite conjunctions of such disjunctions. 

		For the first three types of atoms, a condition forces it iff it is true, and we can recursively in $S$ check each of these facts. Similarly if one of these atoms is false, then no condition forces it to be true, and so every condition forces the negation. 

		The fourth type of atom is $\sigma \subset \generic$. We claim that there is an $\X$ so that $(\Phi_0,\X)\forces \sigma\subset \generic$ iff for each $n<|\sigma|$, we have $\Phi_0(n) = \sigma(n)$ (which is uniformly recursive to check). To see this, note that a condition $p$ forces $\sigma\subset\generic$ iff $\sigma(n) = \Phi_p(n)$ and for each $n$ such that $\sigma(n) = 0$, if $n = \ang{x,y,\tau}$, then $\ang{x,y,\tau}$ is prevented from entering any extension of $p$ (i.e. there is either an axiom $(x_0,y_0,\tau_0)\in \Phi_p$ with $|\tau_0| > |\tau|$, or $x_0 \geq x$ and $\tau_0$ and $\tau$ are compatible, or $\tau$ is an initial segment of some $X\in\X_p$).

		Therefore, for $p$ to force $\sigma\subset \generic$, it is necessary that $\Phi_p$ agrees with $\sigma$ everywhere that $\sigma$ is defined. If $\Phi_0$ and $\sigma$ do agree, then for each $n<|\sigma|$ with $\sigma(n)=0$ and $n$ of the form $\ang{x,y,\tau}$, we can pick a real $X$ extending $\tau$ and let $\X$ be the collection of such reals. Then $(\Phi_0,\X)\forces \sigma\subset \generic$ which completes the claim. 

		Now we turn to literals of the form $\neg(\sigma \subset \generic)$. We claim that we can pick an $\X$ to guarantee that $(\Phi_0,\X)\forces_\P \neg(\sigma\subset \generic)$ iff there is an $n<|\sigma|$ such that $\sigma(n)\neq \Phi_0(n)$. From the definition of forcing, a condition $p$ forces $\neg(\sigma\subset \generic)$ iff every extension of $p$ fails to force $\sigma \subset \generic$. Suppose $\Phi_0$ and $\sigma$ agree wherever $\sigma$ is defined, then, by the above, we can pick $\X_0$ so that $(\Phi_0,\X_0)\forces \sigma\subset \generic$. Therefore, for each $\X$ we have that $(\Phi_0,\X\cup \X_0)\leq_\P (\Phi_0,\X)$ and $(\Phi_0,\X\cup\X_0)\forces \sigma\subset\generic$. Consequently, if $\Phi_0$ and $\sigma$ agree wherever $\sigma$ is defined, then, for each choice of $\X$ there is an extension of $(\Phi_0,\X)$ forcing $\sigma\subset \generic$. By taking the contrapositive we have established one direction of the claim.

		For the other direction, suppose there is an $n$ so that $\sigma(n) \neq \Phi_0(n)$. For such an $n$, either $\sigma(n) = 0$ and $\Phi_0(n) = 1$, or, $\sigma(n) = 1$ and $\Phi_0(n) = 0$. In the first case, no extension of $(\Phi_0,\emptyset)$ can force $\sigma\subset \generic$ as $n$ is in every stronger condition. Hence, $\X = \emptyset$ suffices as the set of reals. In the second case, if $n\neq \ang{x,y,\tau}$, then $n$ is not an element of any Turing functional and so again $\X = \emptyset$ suffices. So suppose $n=\ang{x,y,\tau}$ and let $X$ be any real extending $\tau$. As $\ang{x,y,\tau}\notin \Phi_0$, it is also not in any extension of $(\Phi_0,\{X\})$, so no extension of this condition can force $\sigma\subset\generic$, and so, $\{X\}$ is our set of reals, which completes the proof of the claim.

		Now, suppose we have a sentence $\psi = l_1 \lor \cdots \lor l_n$, which is a finite disjunction of literals. From the definition of forcing, it is clear that if a condition forces one of the disjuncts, it forces the whole disjunction. So, using induction, ask whether there is a literal $l_i$ and a set of reals $\X$ so that $(\Phi_0,\X)\forces l_i$. If there is, we are done, as such an $\X$ suffices. If not,  we claim no $\X$ works. To see this, firstly, note that we get to ignore any of the literals with atomic parts of type (1), (2) or (3), as $(\emptyset,\emptyset)$ decides these sentences as false. So now assume $\psi$ is a disjunction of literals built from atoms of type (4). For each positive literal $\sigma_i \subset \generic$, because we can't pick an $\X$ to force it, then, $\Phi_0$ and $\sigma_i$ disagree somewhere on the domain of $\sigma_i$. Hence, for each $i$, there is a finite collection $\X_i$ so that $(\Phi_0,\X_i)\forces \neg(\sigma_i\subset \generic)$. Similarly, for each negative literal, as we can't pick an $\X$ to force it, then, $\Phi_0$ and $\sigma_i$ agree on the domain of $\sigma_i$, and so, we can pick $\X_i$ so that $(\Phi_0,\X_i)\forces \sigma_i\subset\generic$.
		
		So fix a set of reals $\X$, and define $\X_0 = \bigcup_{i=1}^n \X_i$. Then $(\Phi_0,\X_0\cup \X) \leq_\P (\Phi_0,\X)$ yet $(\Phi_0,\X_0\cup \X)$ forces the opposite of each literal in $\psi$, i.e. forces $\neg\psi$. Consequently, $(\Phi_0,\X)$ does not force $\psi$ for any $\X$.
		
		For conjunctions $\psi_1 \land \cdots \land \psi_m$, the definition of forcing is you force the conjunction iff you force each of the conjuncts. Inductively, ask if there are reals $\X_i$ such that $(\Phi_0,\X_i)\forces \psi_i$, if "yes" in each case, then, $(\Phi_0,\bigcup_{i=1}^m \X_i)$ forces each conjunct. If any of them return a "no", then no choice of reals can suffice for all conjuncts. This completes the decision procedure, uniformity is clear.
		
		For $\Q$, only minor changes are needed. Firstly, one must check whether $\Phi_0$ correctly computes $B$ on input $A$, which is recursive in $A\oplus B$. Secondly, although the definition of $\forces_\Q$ for formulas of the form $\sigma\subset \generic$ is slightly different i.e. an axiom $\ang{x,y,\sigma}$ where $\sigma\subset A$, yet $y\neq B(x)$ is not a member of any $\Q$ condition, this is, again $A\oplus B$ recursive to check. Finally, whenever we picked a real $X$ to prevent an axiom from entering any stronger condition, $X$ just had to extend some finite string, and so, need not be $A$. These are the only observations needed.
	\end{proof}
	
	Now we break the proof of the extended Posner-Robinson theorem into cases. For the moment, imagine we are trying to do a single instance of our goal:
		\[
			[\d\nleq_h \c \,\&\, (\a\nleq_h \c \text{ or } \d \nleq_h \b \sqcup \c )] \rightarrow (\exists g)[\b \leq_h \a \sqcup g \, \& \, \d\nleq_h \c\sqcup g]
		\]
	for some hyperarithmetic degrees $\a,\b,\c$ and $\d$. Suppose they satisfy the antecedent, then, either $\a\nleq_h \c$, or, $\a\leq_h \c$ and $\d\nleq_h \b \sqcup \c$. We call the former the easy case, and the latter the hard case. Pick representatives $A,B,C$ and $D$ from $\a,\b,\c$ and $\d$ respectively, furthermore, if we are in the hard case, ensure that $A \leq_T C$.
	
\section{The easy case}
	We now proceed with the easy case, for which we use the forcing $\P$. While constructing our generic, we will need to code $B$ into the join of $A$ and $\Phi_G$. Our coding strategy is to add axioms $\ang{n,B(n),\alpha}$ to our generic, with $\alpha \subset A$. To ensure that this is possible, we need to show that we can construct a generic sequence without adding $A$ to the infinite part of any condition, and, such that any axiom $\ang{x,y,\sigma}$ applying to $A$ that we add to the finite part satisfies $B(x) = y$.
	
	Additionally, we need to show that a sufficiently generic sequence ensures that $D \nleq_h C \oplus \Phi_G$. Towards this goal we use the forcing language $\infinitaryL$. As our language can only express reductions from some $C$-recursive jump of $C\oplus \Phi_G$ we must also show that $\Phi_G$ preserves $\omega_1^C$. Thus, we have three goals: coding $B$, diagonalizing against computing $D$, and preserving $\omega_1^C$. In this section we work towards the first two goals, leaving the last for Section 5.
	
	\subsection*{Coding $B$ into the join}
	
	\begin{defn}[$(\P,C)$-essential to $\neg\psi$ over $\Phi_0$] 
		Let $\Phi_0$ be a finite use-monotone Turing functional and 
			\[
				\psi = \bigwedge_i (\forall \vec{u}_i) \theta_i(\vec{u}_i)
			\]
		a $\Pi_\alpha^r(C)$ sentence in $\infinitaryL$ for some $\alpha > 0$. 
		
		For $\btau = (\tau_1, \ldots, \tau_n)$, a sequence of elements of $2^{<\omega}$ all of the same length, we say that $\btau$ is \textbf{$(\P,C)$-essential to $\neg\psi$ over $\Phi_0$}, if, for all $p\in\P$, all $i$, and all $\m$ of the correct length, if $p$ is a condition such that $p<_\P (\Phi_0,\emptyset)$, and $p\forces_\P \neg \theta_i(\m)$, then, $\Phi_p \setminus \Phi_0$ includes a triple $\ang{x,y,\sigma}$ such that $\sigma$ is compatible with at least one component of $\btau$.
	\end{defn}
	
	\begin{defn}[$T_{\P,C}(\Phi_0,\psi,k)$] 
		For each finite \umtf $\Phi_0$, each $\Pi^r_\alpha(C)$ sentence $\psi$ of our forcing language with $\alpha>0$, and each natural number $k$, let $\TPC$ be the set of length $k$ vectors of binary strings all of the same length that are $(\P,C)$-essential to $\neg\psi$ over $\Phi_0$.
	\end{defn}
	
	We drop the $(\P,C)$ prefix or subscript where confusion will not arise (as in the rest of this section, where we have fixed $C$ and are only dealing with $\P$).
	
	We order $\T$ by extension on all coordinates. Being essential is closed downward in this order. This endows $\T$ with the structure of a subtree of the length $k$ vectors of binary sequences of equal length, so $\T$ is a subtree of a recursively bounded recursive tree.
	
	\begin{lem}
		Suppose that $\Phi_0$ is a finite use-monotone Turing functional, $\psi$ is a $\Pi_\alpha^r(C)$ sentence in our forcing language with $\alpha > 0$, and $k$ is a natural number.
			\begin{enumerate}
				\item If there is a size $k$ set $\X$ of reals such that $(\Phi_0,\X)\forces \psi(\Phi_G)$, then $\T$ is infinite.
				\item If $\T$ is infinite then it has an infinite path $Y$. Further, each such $Y$ is naturally identified with a size $k$ set $\X(Y)$ of reals such that $(\Phi_0,\X(Y))\forces \psi(\Phi_G)$.
			\end{enumerate}
		\label{ess_sent_easy_1}
	\end{lem}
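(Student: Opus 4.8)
The plan is to obtain (1) from the length-$\ell$ truncations of the reals in $\X$, and (2) from an infinite branch of $\T$ produced by K\"onig's Lemma; throughout I will use the three ``standard lemmas'' quoted in the excerpt (extension preserves forcing, every condition has an extension deciding a given sentence, no condition forces a sentence and its formal negation) together with clause (9) of the forcing definition. Recall $\psi=\bigwedge_i(\forall\vec u_i)\theta_i(\vec u_i)$.

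For (1): enumerate $\X=\{X_1,\dots,X_k\}$ (if $|\X|<k$, pad with fresh reals; since $(\Phi_0,\X\cup\X')\leq_\P(\Phi_0,\X)$, the padded condition still forces $\psi$). I would show that for every $\ell$ the vector $\btau^\ell:=(X_1\restricted\ell,\dots,X_k\restricted\ell)$ is $(\P,C)$-essential to $\neg\psi$ over $\Phi_0$, so that $\btau^\ell\in\T$; then $\T$ has nodes of every length and is infinite. To prove essentiality of $\btau^\ell$, assume it fails: there are $p<_\P(\Phi_0,\emptyset)$, an index $i$, and a tuple $\m$ with $p\forces_\P\neg\theta_i(\m)$, but no axiom of $\Phi_p\setminus\Phi_0$ has a string compatible with any $X_j\restricted\ell$. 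Since every initial segment of $X_j$ is compatible with $X_j\restricted\ell$, this says $\Phi_p\setminus\Phi_0$ contains no axiom applying to a real of $\X$; together with the length constraint on $\Phi_p\setminus\Phi_0$ supplied by $p<_\P(\Phi_0,\emptyset)$, this makes $q:=(\Phi_p,\X_p\cup\X)$ a legitimate condition with $q\leq_\P(\Phi_0,\X)$ and also $q\leq_\P p$. Applying clause (9) to $(\Phi_0,\X)\forces_\P\psi$ with this $i$, $\m$ and $q$ yields $r\leq_\P q$ with $r\forces_\P\theta_i(\m)$; but $r\leq_\P p$, so $r\forces_\P\neg\theta_i(\m)$ as well (extension preserves forcing), contradicting that no condition forces a sentence and its formal negation. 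Hence $\btau^\ell\in\T$ for all $\ell$, and $\T$ is infinite.

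For (2): the ambient tree of length-$k$ vectors of equal-length binary strings is finitely branching ($2^k$ immediate successors at each node), so an infinite subtree has an infinite path by K\"onig's Lemma; thus if $\T$ is infinite it has an infinite path $Y$. Write $Y_j=\bigcup_\ell\btau^\ell_j$ for its $j$th coordinate and put $\X(Y)=\{Y_1,\dots,Y_k\}$ (pad to size $k$ with fresh reals if some coordinates collide). To show $(\Phi_0,\X(Y))\forces_\P\psi$ I would verify clause (9): fix $i$, $\m$, and $q\leq_\P(\Phi_0,\X(Y))$, pick $r\leq_\P q$ deciding $\theta_i(\m)$, and rule out $r\forces_\P\neg\theta_i(\m)$. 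In that case $r<_\P(\Phi_0,\emptyset)$ (its infinite part contains $\X(Y)\neq\emptyset$); now choose $\ell$ strictly larger than the length of every string occurring in the finite functional $\Phi_r$. The node $\btau^\ell$ lies on $Y$, hence is essential, so the definition of essential applied to $r$, $i$, $\m$ returns an axiom $\ang{x,y,\sigma}\in\Phi_r\setminus\Phi_0$ with $\sigma$ compatible with some $\btau^\ell_j=Y_j\restricted\ell$; as $|\sigma|<\ell$ this forces $\sigma\subseteq Y_j\restricted\ell$, i.e. $\sigma$ is an initial segment of $Y_j\in\X(Y)$ — contradicting the third clause of Definition \ref{ksforcing} for $r\leq_\P(\Phi_0,\X(Y))$. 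So $r\forces_\P\theta_i(\m)$, which is what clause (9) requires, and $(\Phi_0,\X(Y))\forces_\P\psi$.

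The one point of real care is coordinating ``$\sigma$ compatible with a length-$\ell$ truncation of $X_j$'' with ``$\sigma$ is an initial segment of $X_j$''. In direction (1) every truncation length works, because initial segments of $X_j$ are automatically compatible with $X_j\restricted\ell$; the delicate step is in (2), where $r$ must be fixed first and $\ell$ chosen only afterwards, large relative to $\Phi_r$, so that the axiom handed back by essentiality is short enough to be an initial segment of $Y_j$ and hence forbidden by the Kumabe--Slaman extension relation. The padding remarks dispose of the cosmetic possibility that $\X$ or $\X(Y)$ has fewer than $k$ distinct elements.
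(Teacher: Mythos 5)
Your proposal is correct and follows essentially the same route as the paper: truncations of the given $\X$ are shown essential for direction (1) (your explicit common extension $(\Phi_p,\X_p\cup\X)$ is just the contrapositive of the paper's incompatibility argument), and direction (2) uses K\"onig's Lemma, the componentwise union $\X(Y)$, and the observation that any $r<_\P(\Phi_0,\emptyset)$ forcing some $\neg\theta_i(\m)$ must contain a new axiom that is an initial segment of some $Y_j$ and hence cannot extend $(\Phi_0,\X(Y))$, after which clause (9) and the decidability lemma finish the proof. Your choice of the level $\ell$ after fixing $r$ cleanly handles the ``sufficiently large element of $Y$'' step that the paper leaves implicit, so no gap remains.
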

	
	\begin{proof}
		For the first claim, let $\X$ be such a set and let $(X_1,\ldots,X_k)$ be an enumeration of it, further, let $\btau_l$ denote $(X_1\restricted l,\ldots, X_k\restricted l)$. We show that each $\btau_l\in \T$, proving the tree is infinite.
		
		Suppose $(\forall \u_i)\theta_i(\u_i)$ is one of the conjuncts that makes up $\psi$, $\m$ is the same length as $\u_i$, and $p \leq_\P(\Phi_0,\emptyset)$ forces $\neg \theta_i(\m)$, so consequently, $p\forces \neg \psi$. As $p$ forces $\neg\psi$ and $(\Phi_0,\X)$ forces $\psi$, these two conditions are incompatible in $\P$, and as $p \leq_\P(\Phi_0,\emptyset)$ there must be an axiom $\ang{x,y,\sigma}\in \Phi_p$ which is forbidden from entering any extension of $(\Phi_0,\X)$. But then, $\sigma$ is an initial segment of some $X_i\in \X$ and so, $\sigma$ is compatible with the corresponding component of $\btau_l$, showing $\btau_l$ is essential.
		
		For the second claim, suppose $\T$ is infinite. As it is a subtree of a finitely branching tree, K\"onig's lemma provides an infinite path $Y$ through it. To each such $Y$, associate $\X(Y)$, the componentwise union of the coordinates of the members of $Y$. This will be a size $k$ set of reals. We claim $(\Phi_0,\X(Y))$ forces $\psi$ for each such $Y$.
		
		Suppose $p<(\Phi_0,\emptyset)$, and there is an $i$ and an $\m$ such that $p\forces \neg\theta_i(\m)$. Then there is an axiom $\ang{x,y,\sigma}\in\Phi_p\setminus \Phi_0$ where $\sigma$ is compatible with some component of each element of $Y$. In particular, for sufficiently large elements of $Y$, $\sigma$ is an initial segment of such a component and so is an initial segment of some $X\in \X(Y)$. So then, $p$ does not extend $(\Phi_0,\X(Y))$. Therefore, no extension of $(\Phi_0,\X(Y))$ forces $\neg\theta_i(\m)$ for any $i$ and $\m$ and as each extension of $(\Phi_0,\X(Y))$ has a further extension deciding $\theta_i(\m)$, by the definition of forcing, $(\Phi_0,\X(Y))\forces \psi$.
	\end{proof}
	
	\begin{nota}
		For a set $S\subset \omega$, we say a decision procedure or property of natural numbers is $\Pi_\alpha^0(S)$ for an $S$-recursive ordinal $\alpha>0$, if the set of solutions to the procedure or the property, respectively, is co-r.e. in $S^{(\alpha)}$ for $\alpha \geq \omega$, and co-r.e. in $S^{(\alpha-1)}$ for finite $\alpha$. We say it is $\Delta_\alpha^0(S)$ if it and its complement are both $\Pi_\alpha^0(S)$. For example a $\Pi^0_1(S)$ set is co-r.e. in $S$ but a $\Pi^0_{\omega+1}(S)$ set is co-r.e. in $S^{\omega+1}$. 
	\end{nota}
	
	\begin{lem}
		For each finite \umtf  $\Phi_0$, each $\Pi_\alpha^r(C)$ sentence $\psi$ with $\alpha > 0$, and each number $k$, $\T$ is $\Pi_{\alpha}^0(C)$ uniformly in $\Phi_0$, $\psi$ and $k$.
		\label{ess_sent_easy_2}
	\end{lem}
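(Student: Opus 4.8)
The plan is to rewrite "$\btau\in\TPC$" so that it no longer mentions the infinite parts of forcing conditions, thereby reducing the lemma to a statement about the complexity of a forcing predicate that depends only on the finite part of a condition, and then to establish that complexity bound by induction on ordinal rank with Lemma~\ref{quantifierfreeforcing} as the base case.

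First I would unwind the definitions. Write $\psi = \bigwedge_i(\forall\vec u_i)\theta_i(\vec u_i)$, so that each $\neg\theta_i(\m)$ is a $\Pi_{\beta_i}^r(C)$ sentence for some $\beta_i<\alpha$, with $\beta_i$ and an index for $\theta_i$ obtained $C$-recursively from an index for $\psi$. Directly from the definition, $\btau\notin\TPC$ iff there are a condition $p<_\P(\Phi_0,\emptyset)$, a conjunct index $i$, and an $\m$ of the right length such that $p\forces_\P\neg\theta_i(\m)$ while no axiom $\ang{x,y,\sigma}\in\Phi_p\setminus\Phi_0$ has $\sigma$ compatible with a component of $\btau$. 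Writing $p=(\Phi_1,\X)$, the requirements "$(\Phi_1,\emptyset)\leq_\P(\Phi_0,\emptyset)$" and "no axiom of $\Phi_1\setminus\Phi_0$ is compatible with $\btau$" depend only on $\Phi_1$ (given $\Phi_0$ and $\btau$) and are recursive, so
\[
  \btau\notin\TPC \iff \exists i\,\exists\m\,\exists\Phi_1\big[\,R_0(\Phi_0,\Phi_1,\btau)\ \wedge\ E(\Phi_1,\neg\theta_i(\m))\,\big],
\]
where $R_0$ is recursive and $E(\Phi_1,\chi)$ abbreviates "there is a finite set $\X$ of reals with $(\Phi_1,\X)\forces_\P\chi$". (In the degenerate case $\Phi_1=\Phi_0$ one also needs $\X\neq\emptyset$ for $p$ to be a strict extension, but by persistence of forcing under shrinking the infinite part this does not affect $E$.) Here $i$ ranges over the $C$-r.e.\ set of conjunct indices of $\psi$ and $\Phi_1,\m$ range over $\omega$, so the only ingredient that could raise the complexity is the existential over $\X$ hidden inside $E$.

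The heart of the argument is then the auxiliary claim: for a $\Pi_\beta^r(C)$ sentence $\chi$ the predicate $E(\Phi_1,\chi)$ is $\Pi_\beta^0(C)$, and for a $\Sigma_\beta^r(C)$ sentence it is $\Sigma_\beta^0(C)$, uniformly in $\Phi_1$ and an index for $\chi$; the two halves are proved by simultaneous induction on $\beta$. For $\beta=0$ this is exactly Lemma~\ref{quantifierfreeforcing} (for $\P$, with $S=C$). For the inductive step one follows the clauses defining $\forces_\P$: a $C$-r.e.\ disjunction (resp.\ conjunction) of sentences of smaller rank becomes, after the induction hypothesis and after commuting "$\exists\X$" past the number quantifiers, a $C$-r.e.\ union (resp.\ intersection) of predicates of smaller level, which stays in $\Sigma_\beta^0(C)$ (resp.\ $\Pi_\beta^0(C)$); the clause $p\forces_\P\bigvee_j(\exists\vec v_j)\eta_j$ is handled likewise, and the clause $p\forces_\P\bigwedge_j(\forall\vec v_j)\eta_j$ is handled as the analogous "$\forall q\,\exists r$" clause for finite disjunctions of literals in the proof of Lemma~\ref{quantifierfreeforcing}, using that the reals one must put into the infinite part to force (or to prevent the forcing of) an atom $\sigma\subset\generic$ may always be taken to extend explicit strings read off from the atom and from $\Phi_1$. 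Granting the claim, $E(\Phi_1,\neg\theta_i(\m))$ is $\Pi_{\beta_i}^0(C)$ uniformly in $\Phi_1,i,\m$ with $\beta_i<\alpha$; feeding this into the displayed equivalence, for each $i$ the statement $\exists\Phi_1\,\exists\m[\,R_0\wedge E(\Phi_1,\neg\theta_i(\m))\,]$ is a number-existential quantification of a recursive predicate conjoined with a $\Pi_{\beta_i}^0(C)$ one, hence lies in $\Sigma_{\beta_i+1}^0(C)\subseteq\Sigma_\alpha^0(C)$, and $\btau\notin\TPC$ is the $C$-r.e.\ union of these over $i$, hence is itself $\Sigma_\alpha^0(C)$ (treating $\alpha$ limit and successor alike). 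Thus $\TPC$ is $\Pi_\alpha^0(C)$, and uniformity in $\Phi_0,\psi,k$ is visible throughout, $k$ entering only through the length of $\btau$ and hence through $R_0$.

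I expect the auxiliary claim — and within its inductive step the $\bigwedge(\forall)$ clause — to be the only real difficulty: one must check that a finite set of reals sufficient to force a $\Pi_\beta^r(C)$ or $\Sigma_\beta^r(C)$ sentence over $\Phi_1$ can be chosen effectively and locally enough that the existential over $\X$ can be reconciled with the universal quantifier over extensions (which itself involves their infinite parts) without paying an extra jump. This is precisely the phenomenon already at work, at the ground level, in the proof of Lemma~\ref{quantifierfreeforcing}; the remaining steps are routine bookkeeping with the hierarchy over $C$.
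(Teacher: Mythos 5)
Your outer reduction is the same as the paper's: you rewrite membership in $\TPC$ so that conditions enter only through their finite parts together with the predicate ``there exists a finite $\X$ such that $(\Phi_1,\X)\forces_\P\chi$,'' and you then want that predicate to have the right level in the hierarchy over $C$, with Lemma~\ref{quantifierfreeforcing} as the base case. The gap is in the auxiliary claim's inductive step, exactly the $\bigwedge(\forall)$ clause you flag as the only difficulty: you assert it can be handled ``as at ground level,'' with the blocking reals taken to extend explicit strings read off from the atom and from $\Phi_1$. That works for quantifier-free sentences because each atom constrains only finitely much of $\generic$, so the infinite part only needs to forbid finitely many explicitly given axioms. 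For a $\Pi^r_\beta(C)$ sentence with $\beta\geq 1$, the clause reads ``for every $q\leq_\P(\Phi_1,\X)$ there is $r\leq_\P q$ forcing $\eta_i(\vec n)$,'' so inside your $\exists\X$ there is a genuine universal quantifier over conditions with arbitrary infinite parts, and the reals needed in $\X$ must block infinitely many potential extensions (all those that would force some $\neg\eta_i(\vec n)$); which reals do this is not determined by any finite amount of data read off from $\Phi_1$ and the sentence. Naive quantifier counting therefore leaves you with alternating quantifiers over reals, not a $\Pi^0_\beta(C)$ predicate, and ``commuting $\exists\X$ past the number quantifiers'' is precisely the step that has no justification.

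The paper closes exactly this gap with Lemma~\ref{ess_sent_easy_1}, which your proposal never invokes: the existence of a size-$j$ set $\X$ with $(\Phi_1,\X)\forces\chi$ is equivalent, via the tree of essential tuples and K\"onig's lemma, to the recursively bounded tree $T(\Phi_1,\chi,j)$ being infinite. This converts the real quantifier into an arithmetic statement about a tree whose complexity is controlled by the induction (the size $j$ is then quantified out into the universal prefix, so no extra jump is paid), and the successor and limit cases of the present lemma are just bookkeeping on top of that equivalence. Without this compactness device, or something equivalent to it, your auxiliary claim is unproved and the claimed $\Pi^0_\beta(C)$/$\Sigma^0_\beta(C)$ bounds do not follow; with it, your argument essentially becomes the paper's. (A small additional point: your $E(\Phi_1,\chi)$ quantifies over sets $\X$ of unbounded size, and even granting the size-$j$ version is $\Pi^0_\beta(C)$ uniformly in $j$, the unbounded version is only $\Sigma^0_{\beta+1}(C)$ on its face; this happens not to hurt your final count since $\beta_i<\alpha$ and you sit inside an existential prefix anyway, but the claim as you state it is not what you can prove.)
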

	\begin{proof}
		
		Fix a length $k$ vector $\btau$ of binary strings of the same length. Recall that $\btau$ is essential to $\neg\psi$ over $\Phi_0$ if for all $p = \condition{p}<_\P (\Phi_0,\emptyset)$, all conjuncts $(\forall \u)\theta_i(\u)$ which make up $\psi$, and all $\m$ of the correct length, if $p\forces \neg \theta_i(\m)$, then $\Phi_p \setminus \Phi_0$ contains an axiom $\ang{x,y,\sigma}$ where $\sigma$ is compatible with some component of $\btau$.
		
		We can express this, equivalently, as 
			\begin{quote}
				For all $\Phi_p \leq_\P\Phi_0$, all $\m$ of the correct length, and all conjuncts $(\forall \u)\theta_i(\u)$ which make up $\psi$, if there exists a set $\X$ such that $(\Phi_p,\X)\forces \neg \theta_i(\m)$, then $\Phi_p \setminus \Phi_0$ contains an axiom $\ang{x,y,\sigma}$ where $\sigma$ is compatible with some component of $\btau$.
			\end{quote} 
		Or more compactly as
			\begin{align*}
				(\forall \Phi_p)(\forall \m )(\forall i)& [ ((\Phi_p,\emptyset)\leq_\P (\Phi_0,\emptyset) \ \& \ (\exists\X)(\Phi_p,\X)\forces \neg\theta_i(\m))\\
				& \rightarrow ((\exists \ang{x,y,\sigma}\in \Phi_p\setminus \Phi_0) \text{ with $\sigma$ compatable with some $\tau\in\btau$})].
			\end{align*}
		We can also "quantify out" the size of the set $\X$, as
			\begin{align*}
				(\forall \Phi_p)(\forall \m )(\forall j)(\forall i)& [ ((\Phi_p,\emptyset)\leq_\P (\Phi_0,\emptyset) \ \& \ (\exists\X,|\X| = j)(\Phi_p,\X)\forces \neg\theta_i(\m))\\
				& \rightarrow ((\exists \ang{x,y,\sigma}\in \Phi_p\setminus \Phi_0) \text{ with $\sigma$ compatable with some $\tau\in\btau$})].
			\end{align*}
		We use both of these formulations in an inductive proof of this lemma, the first in the base case, and the second in the inductive steps. Note that in both cases, the $(\forall i)$ is a quantifier over a $C$-r.e. set: the conjuncts making up $\psi$. This is still equivalent to a single universal quantifier.
		
		Suppose $\alpha = 1$. In this case, for each $i$ and $\m$ of the correct length $\neg\theta_i(\m)$ is a quantifier-free sentence. So, by Lemma \ref{quantifierfreeforcing}, deciding whether there is an $\X$ such that $(\Phi_p,\X)\forces \neg\theta_i(\m)$ is uniformly $C$-recursive in $\Phi_p,\theta_i$ and $\m$. Also, checking whether $\Phi_p \leq_\P\Phi_0$ and whether there is an axiom $\ang{x,y,\sigma}\in \Phi_p\setminus \Phi_0$ compatible with some $\tau\in\btau$ is uniformly recursive in $\Phi_0,\Phi_p$ and $\btau$.
		
		Consequently, in the case $\alpha = 1$, $\btau$ being essential to $\neg\theta$ over $\Phi_0$ can be expressed as a property with two universal natural number quantifiers, then a universal quantifier over a $C$-r.e. set, then a matrix which is $C$-recursive uniformly in all parameters. Such an expression is co-r.e. in $C$, or $\Pi_1^0(C)$ as required.
		
		If $\alpha > 1$ is a successor, we can assume all its conjuncts are $\Sigma^r_{\alpha-1}(C)$ by Ash and Knight. By Lemma \ref{ess_sent_easy_1}, there being an $\X$ of size $j$ such that $(\Phi_p,\X)\forces \neg\theta_{i}(\m)$ is equivalent to $T(\Phi_p,\neg\theta_{i}(\m),j)$ being infinite. But $T(\Phi_p,\neg\theta_{i}(\m),j)$ is, by induction, a $\Pi_{\alpha}^0(C)$ subtree of a recursively bounded recursive tree. Hence, it being infinite is also a $\Pi_{\alpha}^0(C)$ fact, given uniformly in $\Phi_p,j,\theta_i$ and $\m$. (This bound follows because the tree being finite is equivalent to it being of bounded height and our trees are subtrees of recursive trees. Hence, saying our trees is finite is equivalent to saying there is a level of a recursive tree, from which our tree is disjoint i.e. $(\exists n)$[for all nodes $x$ in the recursive tree $T$ at level $n$, $x$ is not in our tree]. If our tree is $\Pi_\alpha^0(C)$ for some $C$ this formula is equivalent to a $\Sigma_\alpha^0(C)$ formula, and so its negation is $\Pi_\alpha^0(C)$ as required.)
		
		Thus, for successor $\alpha$, $\btau$ being essential to $\neg\psi$ over $\Phi_0$ is equivalent to a property which we can express with a block of universal natural number quantifiers, then a universal quantifier over a $C$-r.e. set, then a $\Sigma_{\alpha-1}^0(C)$ matrix uniformly in the parameters. This renders the property $\Pi_{\alpha}^0(C)$ as required.
		
		For the limit case the argument is similar. The same formulation of the definition of $\btau$ being essential works as for the successor case. However, here we have the full power of $C^{(\alpha)}$ which can decide whether recursively branching $C^{(\beta)}$-recursive trees are infinite, for $\beta < \alpha$, uniformly in an index for the tree, so, $C^{(\alpha)}$ can uniformly decide the matrix. The prefix is a block of universal quantifiers, then a universal quantifier over a $C$-r.e. set, with uniformly $C^{(\alpha)}$-recursive matrix, so $\btau$ being essential is uniformly co-r.e. in $C^{(\alpha)}$, or $\Pi_{\alpha}^0(C)$ as required.
	\end{proof}
	\begin{cor}
		Suppose that $S\subseteq \omega$ is not $\Delta_{\alpha}^0(C)$. Let $\Phi_0$ be a finite use-monotone Turing functional, $\psi$ a $\Pi_\alpha^r(C)$ sentence with $\alpha>0$, and $k\geq 1$. If there is a size $k$ set $\X$ of reals such that $(\Phi_0,\X)\forces \psi$, then there is such a set not containing $S$. Moreover, we can find such an $\X$ all of whose members are recursive in $C^{(\alpha+1)}$ uniformly in $\psi,k$ and $S\oplus C^{(\alpha+1)}$.
		\label{ess_sent_easy_3}
	\end{cor}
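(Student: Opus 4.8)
The plan is to extract the corollary from the tree $\T$ together with Lemmas~\ref{ess_sent_easy_1} and~\ref{ess_sent_easy_2}. Since by hypothesis some size $k$ set $\X$ has $(\Phi_0,\X)\forces\psi$, Lemma~\ref{ess_sent_easy_1}(1) makes $\T$ infinite, and Lemma~\ref{ess_sent_easy_2} presents it (uniformly in an index) as a recursively bounded $\Pi^0_1(Z)$ subtree of the recursive tree of length $k$ vectors of equal-length binary strings, where $Z$ is $C^{(\alpha)}$ for $\alpha\geq\omega$ and $C^{(\alpha-1)}$ for finite $\alpha$. Two facts about $Z$ are used: $S\not\leq_T Z$, which is exactly the hypothesis that $S$ is not $\Delta^0_\alpha(C)$; and $Z'\leq_T C^{(\alpha+1)}$, so that, for recursively bounded $\Pi^0_1(Z)$ trees, extendibility of a node is a $\Pi^0_1(Z)$ question and hence leftmost paths are computable from $C^{(\alpha+1)}$, uniformly in an index. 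Write $[\T]$ for the set of infinite paths of $\T$, and for $Y\in[\T]$ write $X_i(Y)$ for the $i$-th member of $\X(Y)$, i.e. the union of the $i$-th coordinates along $Y$.

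The combinatorial core is the claim: \emph{if every $Y\in[\T]$ satisfies $X_i(Y)=S$ for some $i$, then $S\leq_T Z$.} I would prove this by induction on $k$. If for some fixed $i$ one has $X_i(Y)=S$ along \emph{every} path, then the tree of $i$-th coordinates of nodes of $\T$ is a recursively bounded $\Pi^0_1(Z)$ (binary) tree whose only path is $S$ (recursive boundedness and K\"onig's lemma are used to see that the path set of this projected tree is the set of $i$-th coordinates of paths of $\T$); a recursively bounded $\Pi^0_1(Z)$ class with a unique member has that member recursive in $Z$, so $S\leq_T Z$. Otherwise, fix a path along which $X_i(Y)\neq S$, say they differ at argument $m$; a long enough initial node $\sigma$ of that path has $\{Y\in[\T]:\sigma\subset Y\}$ nonempty with every such $Y$ having $X_i(Y)(m)\neq S(m)$. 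Restricting $\T$ to the nodes compatible with $\sigma$ and deleting coordinate $i$ yields a nonempty recursively bounded $\Pi^0_1(Z)$ tree in $k-1$ coordinates, every path of which still equals $S$ in one of the surviving coordinates, and the inductive hypothesis applies.

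Granting the claim, $S\not\leq_T Z$ forces the existence of $Y\in[\T]$ with $X_i(Y)\neq S$ for all $i$, and by compactness there is a node $\sigma$ with $\{Y\in[\T]:\sigma\subset Y\}$ nonempty and every such $Y$ avoiding $S$ in all coordinates. Its leftmost path $Y^*$ is a path of $\T$, is recursive in $Z'\leq_T C^{(\alpha+1)}$, and has $S\notin\X(Y^*)$; by Lemma~\ref{ess_sent_easy_1}(2), $\X(Y^*)$ is a size $k$ set with $(\Phi_0,\X(Y^*))\forces\psi$, and each member, being recursive in $Y^*$, is recursive in $C^{(\alpha+1)}$. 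This proves the first assertion. For the "moreover", note that the leftmost path of $\{Y\in[\T]:\rho\subset Y\}$ (for those $\rho$ for which this set is nonempty) is computable from $Z'\leq_T C^{(\alpha+1)}$ uniformly in $\rho$ (and $\psi,k$), and that for such a fixed $\rho$ the statement "$S$ is not one of the $k$ members of $\X$ of that leftmost path" is $\Sigma^0_1(S\oplus C^{(\alpha+1)})$, hence confirmable from $S\oplus C^{(\alpha+1)}$. Dovetailing a search over all $\rho$ for such a confirmation and outputting the corresponding set $\X$ for the first $\rho$ found is then a procedure uniform in $\psi$, $k$ and $S\oplus C^{(\alpha+1)}$ producing a valid $\X$ all of whose members are recursive in $C^{(\alpha+1)}$; it succeeds because $\sigma$ above is one such $\rho$.

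The main obstacle is the combinatorial claim, specifically getting the conclusion $S\leq_T Z$ rather than merely $S\leq_T Z'$. (One could instead invoke the cone-avoidance basis theorem of Jockusch and Soane relativised to $Z$ to get a path of $\T$ not computing $S$, hence the first assertion, but that construction consults $S$ and so only yields members recursive in $S\oplus C^{(\alpha+1)}$.) The gain comes from passing to a single coordinate and using that the unique path of a recursively bounded $\Pi^0_1(Z)$ class is $Z$-recursive; the one delicate point is that projecting a recursively bounded $\Pi^0_1(Z)$ tree of vectors onto a single coordinate again gives a $\Pi^0_1(Z)$ tree whose path set is the projection of the original path set, which rests on recursive boundedness via K\"onig's lemma. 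The remaining estimates — $\Pi^0_1(Z)$ for node extendibility and $\Sigma^0_1(S\oplus C^{(\alpha+1)})$ for "$S$ is not a member" — are routine.
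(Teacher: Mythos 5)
Your proof is correct, and it reaches the statement by a genuinely different route at the one step where the paper appeals to outside machinery. The paper's proof of the first assertion regards the set of paths of $T(\Phi_0,\psi,k)$ as a nonempty $\Pi^0_\alpha(C)$ class and cites cone avoidance (the Jockusch--Soare basis theorem relativised to $Z$, where $Z=C^{(\alpha)}$ or $C^{(\alpha-1)}$ as in the Notation) to obtain a path $Y$ with $S$ not recursive in $Y$, hence $S\notin \X(Y)$; you instead prove, by induction on the number of coordinates, that if every path had some coordinate equal to $S$ then $S\leq_T Z$ -- via coordinate projections, K\"onig's lemma to identify the projected path set, and the fact that the unique path of a recursively bounded $\Pi^0_1(Z)$ tree is $Z$-recursive -- contradicting the hypothesis that $S$ is not $\Delta^0_\alpha(C)$. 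That gives a self-contained, elementary substitute for the basis theorem. For the ``moreover'' clause your procedure is essentially the paper's: use $S\oplus C^{(\alpha+1)}$ to locate a node all of whose coordinates disagree with $S$ and above which the tree is infinite, then build a path above it using $C^{(\alpha+1)}$ alone (you take leftmost paths via $Z'\leq_T C^{(\alpha+1)}$ where the paper searches for any path), so the members of $\X$ are $C^{(\alpha+1)}$-recursive while the search is uniform in $\psi$, $k$ and $S\oplus C^{(\alpha+1)}$. One correction to your closing parenthetical: the paper uses cone avoidance only to get \emph{existence} of an $S$-avoiding path and then applies this same node trick, so it too obtains members recursive in $C^{(\alpha+1)}$ rather than $S\oplus C^{(\alpha+1)}$; your combinatorial claim is therefore not forced, though it does make the corollary independent of the basis theorem. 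A minor presentational point: phrase the combinatorial claim for arbitrary nonempty recursively bounded $\Pi^0_1(Z)$ trees of tuples (nonemptiness is needed for the unique-path step and is preserved when you restrict below $\sigma$ and delete a coordinate; nonemptiness of $T(\Phi_0,\psi,k)$ itself comes from Lemma \ref{ess_sent_easy_1}(1) and K\"onig's lemma).
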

	\begin{proof}
		Suppose there is a size $k$ set $\X$ such that $(\Phi_0,\X)\forces \psi$. By Lemmas \ref{ess_sent_easy_1} and \ref{ess_sent_easy_2} the tree $\T$ is a $\Pi_{\alpha}^0(C)$ subtree of a recursively bounded recursive tree which has an infinite path. The paths form a nonempty $\Pi^0_{\alpha}(C)$ class and so there is a path $Y$ in which $S$ is not recursive. Then, certainly $S$ is not a member of $\X(Y)$. Further, Lemma \ref{ess_sent_easy_1} implies $(\Phi_0,\X(Y))\forces \psi$. This completes the proof of the first claim.
		
		For the second claim, we need to find an infinite path in $\T$ on which $S$ does not appear, recursively in $C^{(\alpha+1)}\oplus S$. We need $C^{(\alpha+1)}$ to decide which members of $\T$ have an infinite part of $\T$ above it, and $S$ to decide whether a given node could possibly have an extension on which $S$ appears. Consequently, there is an algorithm, recursive in $S\oplus C^{(\alpha + 1)}$, solving this problem which, searches through the tree of length $k$ sequences of binary strings of the same length, looking for one which disagrees with $S$ and has an infinite part of $\T$ above it. Once one is found, the algorithm switches to searching for any path in $\T$ through this node. Note the uniformity in $\psi$ and $k$ follows from the uniformity in the construction of $\T$.
		
		Of course, once we have found a node each of the components of which are not compatible with $S$, yet has a path through it, we never use $S$ again in the construction. Hence, we could externally hard code to only search for paths in $\T$ through some fixed node, and any such path $Y$ corresponds to reals $\X(Y)$ which are recursive in $C^{(\alpha+1)}$ (as $C^{(\alpha+1)}$ is constructing such a $Y$), hence, the members of the $\X$ we build are $C^{(\alpha+1)}$-recursive although we lose the uniformity by externally hard-coding in some information.
	\end{proof}
	
	\begin{cor}
		Suppose that $\alpha > 0$, $S$ is not $\Delta_{\alpha}^0(C)$ and $\psi$ is a $\Pi_\alpha^r(C)$ sentence. For any condition $p = (\Phi_p,\X_p)$ with $S\notin \X_p$, there is a stronger $q$ which we can find uniformly in $\Phi_p$ and $S\oplus C^{(\alpha+1)}\oplus \X_p$ such that
		\begin{enumerate}
			\item $S\notin \X_q$ and each $X\in \X_q \setminus \X_p$ is recursive in $C^{(\alpha+1)}$.
			\item For all $x$, if $\Phi_q(S)(x)$ is defined, then $\Phi_p(S)(x)$ is defined. That is, $q$ does not add any new computations which apply to $S$.
			\item	Either $q \forces \psi$ or there is a conjunct $\theta_i$ and an $\m$ such that $q\forces \neg\theta_i(\m)$ (and we can tell which formula we have forced).
		\end{enumerate}
		\label{ess_sent_easy_4}
	\end{cor}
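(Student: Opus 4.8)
The plan is to produce $q$ by a dovetailed search whose basic steps are decidable from $S\oplus C^{(\alpha+1)}\oplus\X_p$. Call a finite \umtf{} $\Phi$ \emph{admissible} if $\Phi_p\leq_\P\Phi$, no axiom of $\Phi\setminus\Phi_p$ has a string that is an initial segment of $S$, and no axiom of $\Phi\setminus\Phi_p$ applies to a member of $\X_p$; admissibility is uniformly decidable from $S\oplus\X_p$. Write $\psi=\bigwedge_i(\forall\u_i)\theta_i(\u_i)$ with each $\theta_i$ a $\Sigma^r_{\beta_i}(C)$ formula, so that each $\neg\theta_i$ is $\Pi^r_{\beta_i}(C)$ with $\beta_i<\alpha$. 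If $p$ itself forces $\psi$ or some $\neg\theta_i(\m)$ we simply output $q=p$, which meets (1)--(3) vacuously since $\X_q=\X_p$ and $\Phi_q=\Phi_p$; so assume otherwise. The dichotomy to be established is that either some admissible $\Phi$ and some $S$-free infinite part force $\psi$, or they force some $\neg\theta_i(\m)$.

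First I would run two searches in parallel. Search~$\Psi$ enumerates admissible $\Phi$ and $k\geq 1$ and, using $C^{(\alpha+1)}$, asks whether the tree $T_{\P,C}(\Phi,\psi,k)$ is infinite; by Lemmas~\ref{ess_sent_easy_1}--\ref{ess_sent_easy_2} this tree is a subtree of a recursively bounded recursive tree and is $\Pi^0_\alpha(C)$, so "infinite" is $\Pi^0_\alpha(C)$ and hence decidable from $C^{(\alpha+1)}$, uniformly. Search~$\Theta$ enumerates admissible $\Phi$, conjunct indices $i$ (a $C$-r.e.\ set), tuples $\m$, and $k\geq 1$, and asks whether some size-$k$ set $\X$ has $(\Phi,\X)\forces\neg\theta_i(\m)$: for $\beta_i>0$ this is, by Lemma~\ref{ess_sent_easy_1}, equivalent to $T_{\P,C}(\Phi,\neg\theta_i(\m),k)$ being infinite, which is $\Pi^0_{\beta_i}(C)$ and so decidable from $C^{(\beta_i+1)}\leq_T C^{(\alpha+1)}$; for $\beta_i=0$ it is decidable from $C$ by Lemma~\ref{quantifierfreeforcing}. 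When a search reports success I invoke Corollary~\ref{ess_sent_easy_3}---applied with $\alpha$, or with $\beta_i<\alpha$ in place of $\alpha$, which is legitimate since $S\notin\Delta^0_\alpha(C)$ implies $S\notin\Delta^0_{\beta_i}(C)$---or, when $\beta_i=0$, the proof of Lemma~\ref{quantifierfreeforcing} (whose witnessing reals are arbitrary extensions of fixed finite strings, hence may be chosen recursive in $C$ and distinct from $S$), to obtain a size-$k$ set $\X$ with $S\notin\X$, each member recursive in $C^{(\alpha+1)}$, and $(\Phi,\X)$ forcing $\psi$, respectively $\neg\theta_i(\m)$. I then output $q=(\Phi,\X\cup\X_p)$.

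This $q$ works. Since $\Phi$ is admissible, $\Phi_p\leq_\P\Phi$ and $\Phi$ adds no axioms about members of $\X_p$, so $q\leq_\P p$; moreover $(\Phi,\X\cup\X_p)\leq_\P(\Phi,\X)$, so, extension preserving forcing, $q$ forces whichever of $\psi$, $\neg\theta_i(\m)$ the condition $(\Phi,\X)$ did, giving (3), with "which formula was forced" recorded by which search halted. Item (1) holds since $S\notin\X$ and $S\notin\X_p$ while $\X_q\setminus\X_p\subseteq\X$ has $C^{(\alpha+1)}$-recursive members; item (2) holds since $\Phi_q=\Phi$ is admissible, so every axiom of $\Phi_q\setminus\Phi_p$ has string not an initial segment of $S$, whence any defined value of $\Phi_q(S)$ already comes from an axiom of $\Phi_p$. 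Uniformity in $\Phi_p$ and $S\oplus C^{(\alpha+1)}\oplus\X_p$ (and in $\psi$) is inherited from Lemma~\ref{quantifierfreeforcing}, Lemmas~\ref{ess_sent_easy_1}--\ref{ess_sent_easy_2}, and Corollary~\ref{ess_sent_easy_3}.

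The substance---and the step I expect to be the main obstacle---is that one of the two searches halts, i.e.\ the dichotomy above. Since $p\not\forces\psi$, the forcing clause for $\bigwedge\forall$ yields $i,\m$ and (passing to a further extension deciding $\theta_i(\m)$ if needed) $q_0\leq_\P p$ with $q_0\forces\neg\theta_i(\m)$. As $q_0\leq_\P p$, the functional $\Phi_{q_0}$ already adds no axioms about members of $\X_p$; the only obstructions to admissibility are axioms $\ang{x,y,\tau}$ in $\Phi_{q_0}\setminus\Phi_p$ with $\tau\subseteq S$, and the possibility $S\in\X_{q_0}$. The latter is innocuous---feeding $\Phi_{q_0}$ and $\neg\theta_i(\m)$ to Corollary~\ref{ess_sent_easy_3} (or Lemma~\ref{quantifierfreeforcing}) replaces $\X_{q_0}$ by an $S$-free infinite part still forcing $\neg\theta_i(\m)$---so the real task is to eliminate $S$-applying axioms from a finite part. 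The idea is that such an axiom is needed in forcing $\neg\theta_i(\m)$ only to (a) make the finite part agree with a string $\sigma$ occurring positively in a literal $\sigma\subset\generic$, or (b) block a triple from entering stronger conditions on behalf of a negative literal. A use of type (b) can always be rerouted: a triple is blocked either by an axiom of the finite part whose string is long enough---which may be taken incompatible with $S$, since the finite part is ours to choose---or by placing into the infinite part a real extending the relevant string but differing from $S$, which is possible as $S$ is a single real. A use of type (a) is the only genuine constraint, and it occurs exactly when the subformula driving the forcing positively asserts membership in $\generic$ of a triple with string $\subseteq S$; but then the matching conjunct $\theta_i$ (or subformula) asserts that non-membership, so forcing $\psi$ requires only blocking that triple, which as in (b) can be done admissibly---and then Search~$\Psi$ succeeds instead. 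Carrying this dichotomy through the recursion on the infinitary structure of $\psi$, and checking that the rerouted functionals remain use-monotone and $\leq_\P$-extend $\Phi_p$, is where the (largely bookkeeping) work lies.
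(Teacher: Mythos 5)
Your reduction of the corollary to the termination of the two searches is fine, and the second half of your argument (turning a successful search into $q=(\Phi,\X\cup\X_p)$ satisfying (1)--(3)) matches what the paper does. The genuine gap is exactly the step you flag as "the main obstacle" and then only sketch: proving the dichotomy that some \emph{admissible} finite part, together with an $S$-free infinite part, forces $\psi$ or some $\neg\theta_i(\m)$. Your proposed route --- take an arbitrary $q_0\leq_\P p$ with $q_0\forces\neg\theta_i(\m)$ and "reroute" its axioms that apply to $S$ --- is not a proof. The (a)/(b) analysis of how an axiom is "used" only makes sense at the level of quantifier-free matrices; for a general $\Pi^r_\beta(C)$ formula, $q_0\forces\neg\theta_i(\m)$ is a statement quantifying over all extensions of $q_0$, and there is no argument given that deleting or replacing an $S$-applying axiom of $\Phi_{q_0}$ preserves this (nor does the step "then the matching conjunct asserts non-membership, so forcing $\psi$ requires only blocking that triple" make sense for an arbitrary r.e.\ conjunction $\psi$). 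Deferring this to "carrying the dichotomy through the recursion on the infinitary structure of $\psi$" is deferring the entire content of the corollary, not bookkeeping.

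The missing idea is the paper's device of reserving an extra coordinate for $S$ in the essentiality tree. With $k=|\X_p|$, ask (via $C^{(\alpha+1)}$) whether $T(\Phi_p,\psi,k+1)$ is infinite. If it is, Corollary \ref{ess_sent_easy_3} yields an $S$-free path and hence an infinite part $\X(Y)$ with $(\Phi_p,\X_p\cup\X(Y))\forces\psi$, with the finite part untouched, so (2) is automatic. If it is finite, then $(X_1,\ldots,X_k,S)$ is not a path, so for some $l$ the tuple $(X_1\restricted l,\ldots,X_k\restricted l,S\restricted l)$ is not essential to $\neg\psi$ over $\Phi_p$; the witness to non-essentiality is precisely a condition extending $(\Phi_p,\emptyset)$ forcing some $\neg\theta_i(\m)$ all of whose new axioms are incompatible with every coordinate, in particular with $S\restricted l$ and each $X_i\restricted l$ --- i.e.\ an admissible $\Phi_q$ in your sense, found by a $S\oplus C^{(\alpha+1)}\oplus\X_p$-recursive search, to which Lemma \ref{ess_sent_easy_1} and Corollary \ref{ess_sent_easy_3} then attach an $S$-free, $C^{(\alpha+1)}$-recursive infinite part. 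That single tree question is what delivers, in one stroke, the dichotomy your parallel searches need; without it (or some substitute argument of comparable substance) your proof is incomplete.
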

	\begin{proof}
		Fix a condition $p = \condition{p}$, and let $k = |\X_p|$. We would like to know whether $T(\Phi_0,\psi,k+1)$ is infinite. It is a $\Pi^0_{\alpha}(C)$ subtree of a recursively bounded recursive tree, and hence, this is a $\Pi^0_{\alpha}(C)$ fact. If $T(\Phi_0,\psi,k+1)$ is infinite, then Corollary \ref{ess_sent_easy_3} provides a path $Y$ through it, not containing $S$, each member of which is recursive in $S\oplus C^{(\alpha+1)}$. Then, $(\Phi_p,\X_p \cup \X(Y))\leq_\P (\Phi_p,\X)$, $(\Phi_p,\X_p \cup \X(Y))\forces \psi$, and, clearly, (2) is satisfied, as we haven't changed the finite part.
		
		If $T(\Phi_0,\psi,k+1)$ is not infinite, then (enumerating $\X = \{X_1,\ldots,X_k\}$) $(X_1,\ldots,X_k,S)$ does not provide a path through it. Consequently, for some $l$, $(X_1\restricted l,\ldots,X_n\restricted l, S\restricted l)$ is not essential to $\neg\psi$ over $\Phi_p$, meaning there is a conjunct $\theta_i$, numerals $\m$ and a condition $q=\condition{q}$ extending $(\Phi_p,\emptyset)$ such that $q\forces_\P \neg\theta_i(\m)$ yet, every new axiom $\ang{x,y,\sigma}\in\Phi_q\setminus\Phi_p$ is incompatible with each coordinate of $(X_1\restricted l,\ldots,X_n\restricted l, S\restricted l)$. In particular, there are no new axioms applying to $S$, nor to any $X_i\in \X$, and by Lemma \ref{ess_sent_easy_1} there is a $j$ such that $T(\Phi_q,\neg\theta_i(\m),j)$ is infinite.
		
		We can find which $\Phi_q<\Phi_p$ add neither any new computations applying to $S$, nor to any member of $\X_p$, recursively in $S\oplus \X_p$, and then, whether $T(\Phi_q,\neg\theta_i(\m),j)$ is infinite recursively in $C^{(\alpha)}$. Consequently, we can find such a $\Phi_q<\Phi_p,\theta_i,\m$ and $j$ recursively in $S\oplus C^{(\alpha+1)}\oplus \X_p$, (we add the extra jump because for each $\Phi_q < \Phi_p$ we ask whether there exists a $\theta_i,\m$ and $j$ such that $T(\Phi_p,\neg\theta_i(\m),j)$ is infinite). Once we have found such a $\Phi_q,\theta_i,\m$ and $j$, applying Corollary \ref{ess_sent_easy_4} again, there is a size $j$ set $\X$ with $S\notin \X$ such that every $X\in\X$ is uniformly recursive in $S\oplus C^{(\alpha+1)}$ and $(\Phi_q,\X)\forces \neg\theta_i(\m)$, therefore, $(\Phi_q,\X\cup\X_p)$ is the condition we want.
	\end{proof}
	
	As $A$ is not $\Delta_\alpha^0(C)$ for any $C$-recursive $\alpha$, we can repeatedly apply Corollary \ref{ess_sent_easy_4} to construct an $\infinitaryL$-generic sequence in $\P$, without adding $A$ to the infinite part of any condition, and without adding any axioms applying to $A$. Hence, we can dovetail this construction with one which adds axioms $\ang{n,B(n),\alpha}$ for some sufficiently long $\alpha \subset A$ which effectively codes $B$ into the join of $A$ and $\Phi_G$. Now we proceed with showing that we can diagonalize against $C\oplus \Phi_G$ computing $D$.
	
	\subsection*{Diagonalizing against $D$}
	
	\begin{nota}
		For a real $S$, we say $\ang{e,\alpha}$ is an $S$-hyperarithmetic reduction, if $e$ is an index and $\alpha$ is an $S$-recursive ordinal. We write $\ang{e,\alpha}^S$ for the partial function $\{e\}^{S^{(\alpha)}}$.
	\end{nota}

	\begin{defn}[$(\P,C)$-Essential to splits]
		Let $\Phi_0$ be a finite \umtf and $\ang{e,\alpha}$ a $C$-hyperarithmetic reduction. For each $\btau$, a finite sequence of binary strings of the same length, we say $\btau$ is \textbf{$(\P,C)$-essential to $\ang{e,\alpha}$-splits below $\Phi_0$} if whenever $p,q<_\P (\Phi_0,\emptyset)$ form an $\ang{e,\alpha}$-split (i.e. there is an $x$ such that $p\forces \ang{e,\alpha}^{C\oplus \Phi_G}(x)\downarrow = k_1,q\forces \ang{e,\alpha}^{C\oplus\Phi_G}(x)\downarrow = k_2$ and $k_1\neq k_2$) there is some new axiom $\ang{x,y,\sigma}\in\Phi_p \cup \Phi_q$ but not in $\Phi_0$ such that $\sigma$ is compatible with some component of $\btau$.
	\end{defn}
	We let $\UPC$ be the set of $\btau$ of length $k$ which are $(\P,C)$-essential to $\ang{e,\alpha}$-splits below $\Phi_0$, and consider this as a tree as before. Also, as before, we drop the $\P$ and $C$ when confusion won't arise.
	
	\begin{lem}
		Let $\Phi_0$ be a Turing functional, $\ang{e,\alpha}$ a $C$-hyperarithmetic reduction, and $k$ a natural number.
		\begin{itemize}
			\item If $\X$ is a size $k$ set of reals such that no pair $p,q<_\P (\Phi_0,\X)$ form an $\ang{e,\alpha}$-split, then $\U$ is infinite.
			\item If $\U$ is infinite then it has an infinite path, and each such path $Y$ is identified with a size $k$ set $\X(Y)$ such that $(\Phi_0,\X(Y))$ has no $\ang{e,\alpha}$-splits below it.
		\end{itemize}
		\label{ess_splits_easy_1}
	\end{lem}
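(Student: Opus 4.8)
The plan is to follow the proof of Lemma~\ref{ess_sent_easy_1} almost verbatim, replacing "$p\forces\neg\psi$" throughout by "$p$ is one half of an $\ang{e,\alpha}$-split". Everything rests on the same observation about compatibility in $\P$ that is implicit in that earlier argument: if $p<_\P(\Phi_0,\emptyset)$ and $\X$ is a finite set of reals, then $p$ is incompatible with $(\Phi_0,\X)$ iff $\Phi_p\setminus\Phi_0$ contains an axiom $\ang{x,y,\sigma}$ with $\sigma$ an initial segment of some member of $\X$. Indeed, $(\Phi_p,\X_p\cup\X)$ is the canonical candidate for a common refinement, and it is one unless it adds a new computation applying to some $X\in\X$; conversely any common refinement $r$ has $\Phi_p\subseteq\Phi_r$ yet computes only $\Phi_0(X)$ on each $X\in\X$, which, since $\Phi_0\subseteq\Phi_r$ and $\Phi_r$ is a Turing functional, forbids exactly such axioms.

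For the first item, fix $\X=\{X_1,\ldots,X_k\}$ with no $\ang{e,\alpha}$-split below $(\Phi_0,\X)$ and set $\btau_l=(X_1\restricted l,\ldots,X_k\restricted l)$; I will show $\btau_l\in\U$ for all $l$, so that $\U$, containing nodes of unbounded length, is infinite. So suppose $p,q<_\P(\Phi_0,\emptyset)$ form an $\ang{e,\alpha}$-split, witnessed at input $x$. If $p$ and $q$ were both compatible with $(\Phi_0,\X)$, I could pass to common refinements $p'$ of $p,(\Phi_0,\X)$ and $q'$ of $q,(\Phi_0,\X)$; since extension preserves forcing, $p'$ and $q'$ would form an $\ang{e,\alpha}$-split below $(\Phi_0,\X)$, a contradiction. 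Hence one of $p,q$, say $p$, is incompatible with $(\Phi_0,\X)$, so by the observation $\Phi_p\setminus\Phi_0$ contains an axiom $\ang{x',y',\sigma}$ with $\sigma\subseteq X_i$ for some $i\leq k$; then $\sigma$ is compatible with the $i$th coordinate of $\btau_l$, and $\ang{x',y',\sigma}\in(\Phi_p\cup\Phi_q)\setminus\Phi_0$, so $\btau_l$ is essential.

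For the second item, $\U$ is a subtree of the recursively bounded recursive tree of length-$k$ vectors of equal-length binary strings, so, when it is infinite, K\"onig's lemma gives an infinite path $Y$; let $\X(Y)=\{X_1,\ldots,X_k\}$ be the componentwise unions along $Y$. Suppose toward a contradiction that $p,q<_\P(\Phi_0,\X(Y))$ form an $\ang{e,\alpha}$-split; then also $p,q<_\P(\Phi_0,\emptyset)$, so essentiality of each node $\btau_l$ of $Y$ yields, for every $l$, an axiom in the finite set $(\Phi_p\cup\Phi_q)\setminus\Phi_0$ compatible with some coordinate of $\btau_l$. Since there are finitely many such axioms and only $k$ coordinates, pigeonhole supplies one axiom $\ang{x',y',\sigma}$ and one index $i$ with $\sigma$ compatible with the $i$th coordinate of $\btau_l$ for arbitrarily large $l$; taking $l>|\sigma|$ forces $\sigma\subseteq X_i$. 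Say $\ang{x',y',\sigma}\in\Phi_p$; then $\Phi_p(X_i)(x')=y'$, and as $\ang{x',y',\sigma}\notin\Phi_0$ and $\Phi_p\supseteq\Phi_0$ is a Turing functional, $p$ adds a computation applying to $X_i\in\X(Y)$ absent from $\Phi_0$ --- contradicting $p<_\P(\Phi_0,\X(Y))$. Hence no $\ang{e,\alpha}$-split lies below $(\Phi_0,\X(Y))$.

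The one step that genuinely differs from the sentence version, and which I expect to be the crux, is the "at least one of $p,q$ is incompatible with $(\Phi_0,\X)$" reduction: a split involves two conditions, and neither need individually clash with $(\Phi_0,\X)$ --- the obstruction is only that they cannot both be pushed inside $(\Phi_0,\X)$ while still splitting. The refine-both argument above handles this; after it, the combinatorics of use-monotone functionals is exactly as in Lemma~\ref{ess_sent_easy_1}.
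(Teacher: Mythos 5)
Your proof is correct and follows essentially the same route as the paper: show each $\btau_l$ coming from $\X$ is essential via incompatibility with $(\Phi_0,\X)$, then apply K\"onig's lemma and the essentiality of the nodes of $Y$ to rule out splits below $(\Phi_0,\X(Y))$. The only difference is that you make explicit two steps the paper leaves implicit (the refine-both argument showing one of $p,q$ must be incompatible with $(\Phi_0,\X)$, and the pigeonhole step fixing a single axiom and coordinate along the path), which is a faithful filling-in rather than a new approach.
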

	
	\begin{proof}
		Let $\btau_l = (X_1\restricted l, \ldots, X_k\restricted l)$ where $X_1,\ldots,X_k$ enumerates $\X$. Suppose $p,q<_\P (\Phi_0,\emptyset)$ form an $\ang{e,\alpha}$-split. As $(\Phi_0,\X)$ has no splits below it at least one of $p$ and $q$ are not compatible with $(\Phi_0,\X)$. Suppose it is $p$ which is incompatible. As $p<_\P (\Phi_0,\emptyset)$, $p$ must have a new axiom $\ang{x,y,\sigma}$ which some $X_i$ forbids. Hence, $\sigma \subset X_i$ and, therefore, $\sigma$ is compatible with the $i$th component of $\btau_l$. Hence, every $\btau_l$ is essential to $\ang{e,\alpha}$-splits, and so $\U$ is infinite.
		
		For the second claim, suppose $\U$ is infinite. Then, Konig's lemma provides us a path $Y$, and $\X(Y)$ is the componentwise union of $Y$. Suppose $p,q\leq_\P(\Phi_0,\X(Y))$ form an $\ang{e,\alpha}$-split below $(\Phi_0,\X(Y))$, then they also form a split below $(\Phi_0,\emptyset)$. As the elements of $Y$ are essential, there is some new axiom $\ang{x,y,\sigma}$ in (WLOG) $\Phi_p$, but not in $\Phi_0$ compatible with some component of each element of $Y$. Hence, $\sigma$ is an initial segment of some component of $\X(Y)$, and so $p\nleq_\P (\Phi_0,\X(Y))$, a contradiction.
	\end{proof}
	
	\begin{lem}
		If $\Phi_0$ is a Turing functional, $\ang{e,\alpha}$ a $C$-hyperarithmetic reduction, and $k$ a natural number then, $\U$ is $\Pi^0_{\alpha+3}(C)$ uniformly in $\Phi_0$, $e$, and $k$.
		\label{ess_splits_easy_2}
	\end{lem}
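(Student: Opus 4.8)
The plan is to reproduce the quantifier-counting argument from the proof of Lemma~\ref{ess_sent_easy_2}, with the clause ``$p\forces\neg\theta_i(\m)$'' there replaced by ``$p$ is one half of an $\ang{e,\alpha}$-split.'' The structural observation driving this is that $\ang{e,\alpha}^{C\oplus\Phi_G}(x)\downarrow=k$ is, uniformly in $e,x,k$, a $\Sigma^r_{\alpha+1}(C)$ sentence, so by clause 8 of the definition of $\forces_\P$ a condition $p$ forces it iff $p$ forces some \emph{disjunct-instance}: a disjunct of that $\Sigma^r_{\alpha+1}(C)$ sentence with its existential quantifiers instantiated by numerals, which is a $\Pi^r_\beta(C)$ sentence for some $\beta\leq\alpha$ (quantifier-free when $\alpha=0$), such disjunct-instances being enumerated by a $C$-r.e.\ set. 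Consequently $p=(\Phi_p,\X_p)$ and $q=(\Phi_q,\X_q)$ form an $\ang{e,\alpha}$-split iff there are $x$, $k_1\neq k_2$, a disjunct-instance $\psi_1$ of $\ang{e,\alpha}^{C\oplus\Phi_G}(x)=k_1$, and a disjunct-instance $\psi_2$ of $\ang{e,\alpha}^{C\oplus\Phi_G}(x)=k_2$, with $p\forces\psi_1$ and $q\forces\psi_2$.

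With this in hand I would rewrite ``$\btau\in\UPC$'', exactly as in Lemma~\ref{ess_sent_easy_2}, as the assertion: for all finite use-monotone Turing functionals $\Phi_p,\Phi_q$ with $\Phi_p,\Phi_q\leq_\P\Phi_0$, all $x$, all $k_1\neq k_2$, all disjunct-instances $\psi_1,\psi_2$ as above, and all $j_1,j_2$, if there is a size-$j_1$ set $\X_1$ with $(\Phi_p,\X_1)\forces\psi_1$ and a size-$j_2$ set $\X_2$ with $(\Phi_q,\X_2)\forces\psi_2$, then $(\Phi_p\setminus\Phi_0)\cup(\Phi_q\setminus\Phi_0)$ contains an axiom whose string is compatible with some component of $\btau$. (The two directions are as in Lemma~\ref{ess_sent_easy_2}: from an actual split one reads off the finite parts and takes $\X_1=\X_p$, $\X_2=\X_q$; conversely $(\Phi_p,\X_1)$ and $(\Phi_q,\X_2)$ extend $(\Phi_0,\emptyset)$ and split.) The prefix is a block of number quantifiers together with a universal quantifier ranging over the $C$-r.e.\ set of disjunct-instances, which --- as in Lemma~\ref{ess_sent_easy_2} --- is still one universal quantifier. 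The hypothesis ``there is a size-$j_1$ set $\X_1$ with $(\Phi_p,\X_1)\forces\psi_1$'' is uniformly $C$-recursive by Lemma~\ref{quantifierfreeforcing} when $\psi_1$ is quantifier-free, and for $1\leq\beta\leq\alpha$ is, by Lemma~\ref{ess_sent_easy_1}, equivalent to $T(\Phi_p,\psi_1,j_1)$ being infinite --- a $\Pi^0_\beta(C)\subseteq\Pi^0_\alpha(C)$ fact, uniformly, as in the proof of Lemma~\ref{ess_sent_easy_2}; validity of a disjunct-index is $C$-r.e., and everything else is recursive. So the matrix behind the universal prefix is $\Sigma^0_{\alpha+1}(C)$ at worst, which makes $\UPC$ a $\Pi^0_{\alpha+1}(C)$ set --- in particular $\Pi^0_{\alpha+3}(C)$ --- uniformly in $\Phi_0,e,k$ (and merely $\Pi^0_1(C)$ when $\alpha=0$).

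I do not anticipate a single hard step; the content is bookkeeping, and the things to get right are (i) using clause 8 to reduce forcing the $\Sigma^r_{\alpha+1}(C)$ split-sentences to forcing a single $\Pi^r_\beta(C)$ disjunct-instance with $\beta\leq\alpha$, tracking the uniformity; (ii) absorbing the ``there exists a witnessing set $\X$'' quantifier into the universal prefix by passing to the trees $T(\Phi_p,\psi_i,j_i)$ via Lemmas~\ref{ess_sent_easy_1} and~\ref{ess_sent_easy_2}, with the $\beta=0$ case handled by Lemma~\ref{quantifierfreeforcing}; and (iii) checking, as in Lemma~\ref{ess_sent_easy_2}, that a universal quantifier over the $C$-r.e.\ family of disjunct-instances does not increase the complexity. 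Once these are set up, the count is the same as in the proof of Lemma~\ref{ess_sent_easy_2}.
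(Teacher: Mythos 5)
Your proposal is correct, and it establishes the lemma by the same overall skeleton as the paper (rewrite essentiality quantifying only over finite parts, then absorb the ``$\exists \X$'' quantifiers by passing to the trees $T$ and counting quantifiers), but it diverges at the one step that determines the final bound. The paper treats ``$\ang{e,\alpha}^{C\oplus\Phi_G}(m)\downarrow=m_1$'' as a $\Sigma^r_{\alpha+1}(C)$ sentence and, in order to apply Lemmas \ref{ess_sent_easy_1} and \ref{ess_sent_easy_2} (which are stated for $\Pi^r$ sentences), bumps it up one level to a $\Pi^r_{\alpha+2}(C)$ sentence; the associated trees are then $\Pi^0_{\alpha+2}(C)$, and the extra existential over the tree-size parameter plus the conditional yields the stated $\Pi^0_{\alpha+3}(C)$ bound. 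You instead invoke clause 8 of the definition of $\forces_\P$ to replace ``$p$ forces the $\Sigma^r_{\alpha+1}(C)$ split sentence'' by ``$p$ forces some disjunct-instance,'' each of which is $\Pi^r_\beta(C)$ with $\beta\leq\alpha$, and then apply the tree machinery at level $\beta$; the universal over the $C$-r.e.\ family of disjunct-instances costs nothing extra, exactly as in Lemma \ref{ess_sent_easy_2}. Your two-directional check of the rewritten definition is the right one and goes through, so you obtain $\UPC\in\Pi^0_{\alpha+1}(C)$, which is sharper than, and in particular implies, the claimed $\Pi^0_{\alpha+3}(C)$; the paper's route is coarser but avoids having to argue about disjunct-instances at varying levels $\beta\leq\alpha$ (where one quietly uses the standard uniform passage from $\Pi^0_\beta(C)$ indices to $C^{(\alpha)}$-answers). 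One small wrinkle: at the base level you cite Lemma \ref{quantifierfreeforcing} for ``there is a size-$j_1$ set $\X_1$ with $(\Phi_p,\X_1)\forces\psi_1$,'' but that lemma decides existence of \emph{some} $\X$, with no size constraint; either drop the size parameters for the quantifier-free disjunct-instances (as the paper does in the base case of Lemma \ref{ess_sent_easy_2}), or note that padding the infinite part with extra reals preserves forcing, so the size-$j$ version is monotone in $j$ and causes no harm to either the equivalence or the complexity count. This is presentational, not a gap.
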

	\begin{proof}
		Fix $\Phi_0,\ang{e,\alpha},k$, and $\btau$ of length $k$. Then $\btau \in \U$ if and only if $\btau$ is essential to splits below $\Phi_0$, which is equivalent to:
		\begin{align*}
		(\forall p,q<_\P (\Phi_0,\emptyset))(\forall m,m_1, m_2)
		& [ (p\forces \ang{e,\alpha}^{(C\oplus \Phi_G)}(m)\downarrow = m_1\\ 
		&\land \ q\forces \ang{e,\alpha}^{(C\oplus \Phi_G)}(m) \downarrow = m_2 \\
		&\land m_1\neq m_2) \\
		& \rightarrow (\exists\ang{x,y,\sigma})\text{ such that }\ang{x,y,\sigma}\in (\Phi_p\cup \Phi_q)\setminus \Phi_0 \\ 
		& \hspace{8mm} \text{yet $\sigma$ compatible with some }\tau\in\btau)]
		\end{align*}
		Using a similar trick as in Lemma \ref{ess_sent_easy_2}, we rewrite this as
		\begin{align*}
		(\forall \Phi_p,\Phi_q\leq_\P \Phi_0)(\forall m,m_1, m_2)
		& [ ((\exists \X_p)(\Phi_p,\X_p)\forces \ang{e,\alpha}^{(C\oplus \Phi_G)}(m)\downarrow = m_1\\ 
		&\land (\exists \X_q)(\Phi_q,\X_q) \forces \ang{e,\alpha}^{(C\oplus \Phi_G)}(m) \downarrow = m_2 \\ 
		&\land m_1\neq m_2) \\
		& \rightarrow (\exists\ang{x,y,\sigma})\text{ such that }\ang{x,y,\sigma}\in (\Phi_p\cup \Phi_q)\setminus \Phi_0 \\ 
		& \hspace{8mm} \text{yet $\sigma$ compatible with some }\tau\in\btau)]
		\end{align*}
		But, by Lemma \ref{ess_sent_easy_1}, we can replace
		\[
		(\exists \X_p)\left[(\Phi_p,\X_p)\forces \ang{e,\alpha}^{(C\oplus \Phi_G)}(m)\downarrow = m_1\right]
		\]
		with 
		\[
		(\exists j)\left[T(\Phi_p,\ang{e,\alpha}^{C\oplus \Phi_G}(m)\downarrow = m_1,j)\text{ is infinite}\right]
		\]
		and make similar replacements for $\Phi_q$ and $\X_q$.
		
		Now, "$ \ang{e,\alpha}^{(C\oplus \Phi_G)}(m)\downarrow = m_1$" can be expressed as a $\Sigma_{\alpha+1}^r(C)$ sentence in our forcing language, uniformly in $m,m_1, e,\alpha$. So then, we would like to know whether a $\Pi_{\alpha+2}^0(C)$ subtree of a recursively bounded recursive tree (have to go up one level one to get a $\Pi^r$ sentence) is infinite, which is a $\Pi_{\alpha+2}^0(C)$ condition, uniformly in $j,m,m_1,\Phi_p$. Hence, $\btau$ being essential to $\ang{e,\alpha}$-splits below $\Phi_0$ is equivalent to a sentence with an initial block of universals, with a conditional matrix, the antecedent of which is $\Pi^0_{\alpha+2}(C)$ and the consequent uniformly recursive. Hence, $\btau$ being essential to $\ang{e,\alpha}$-splits below $\Phi_0$ is a $\Pi_{\alpha+3}(C)$ property. Uniformity is clear from the analysis.
	\end{proof}
	
	\begin{cor}
		For each finite \umtf $\Phi_0$, each $C$-hyperarithmetic reduction $\ang{e,\alpha}$, and each natural number $k$, if $S$ is not $\Delta^0_{\alpha+3}(C)$ and if there is a size $k$ set $\X$ such that $(\Phi_0,\X)$ has no $\ang{e,\alpha}$-splits below it, then there is such an $\X$ not containing $S$. Further, we can find such an $\X$ all of whose members are recursive in $C^{(\alpha+4)}$ indeed, we can construct $\X$ uniformly in $e,k$ and $S\oplus C^{(\alpha+4)}$
		\label{ess_splits_easy_3}
	\end{cor}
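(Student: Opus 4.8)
The plan is to run the argument of Corollary~\ref{ess_sent_easy_3} essentially verbatim, with the trees $\U$ in place of the trees $\T$ and with Lemmas~\ref{ess_splits_easy_1} and~\ref{ess_splits_easy_2} replacing Lemmas~\ref{ess_sent_easy_1} and~\ref{ess_sent_easy_2}. First I would establish the existence statement: suppose some size $k$ set $\X$ has no $\ang{e,\alpha}$-splits below $(\Phi_0,\X)$. By Lemma~\ref{ess_splits_easy_1} the tree $\U$ is infinite, and by Lemma~\ref{ess_splits_easy_2} it is a $\Pi_{\alpha+3}^0(C)$ subtree of a recursively bounded recursive tree, so its infinite paths form a nonempty $\Pi_{\alpha+3}^0(C)$ class. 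Since $S$ is not $\Delta_{\alpha+3}^0(C)$, the same cone-avoidance argument used in the proof of Corollary~\ref{ess_sent_easy_3} yields an infinite path $Y$ through $\U$ with $S\not\leq_T Y$. As each component $X_i$ of $\X(Y)$ is computable from $Y$, this gives $S\notin\X(Y)$, and the second part of Lemma~\ref{ess_splits_easy_1} gives that $(\Phi_0,\X(Y))$ has no $\ang{e,\alpha}$-splits below it.

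For the effective version, the task is to build such a path $Y$ recursively in $S\oplus C^{(\alpha+4)}$. As in the proof of Lemma~\ref{ess_sent_easy_2}, "the subtree of $\U$ above a given node is infinite" is a $\Pi_{\alpha+3}^0(C)$ fact uniformly in the node (finiteness of a $\Pi_{\alpha+3}^0(C)$ subtree of a recursively bounded recursive tree is $\Sigma_{\alpha+3}^0(C)$), so $C^{(\alpha+4)}$ decides it. The algorithm then searches the recursively bounded recursive tree of length $k$ vectors of equal-length binary strings for a node $\btau=(\tau_1,\dots,\tau_k)$ such that some $\tau_i$ is not an initial segment of $S$ (checked using $S$) and $\U$ has an infinite subtree above $\btau$ (checked using $C^{(\alpha+4)}$). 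Such a node exists, since the path $Y$ from the first paragraph has $X_i\neq S$ for every component $X_i$ of $\X(Y)$, so for some $l$ and some $i$ the string $X_i\restricted l$ is not an initial segment of $S$ while $Y\restricted l$ still lies in $\U$ with $Y$ passing through it. Once $\btau$ is found, the algorithm discards $S$ and extends $\btau$ to a full path of $\U$ using $C^{(\alpha+4)}$ alone, at each step passing to an extension that has an infinite subtree of $\U$ above it (K\"onig's lemma for the recursively bounded tree $\U$ guarantees this never gets stuck). The resulting $Y$ yields the desired $\X(Y)$, and uniformity in $e$ and $k$ (and in $\Phi_0$) follows from the uniformity in the construction of $\U$ in Lemma~\ref{ess_splits_easy_2}.

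Finally, exactly as in Corollary~\ref{ess_sent_easy_3}, once the node $\btau$ has been located the oracle $S$ is never consulted again, so by externally hard-coding a suitable $\btau$ one obtains a construction of a path of $\U$ — and hence of $\X(Y)$ — from $C^{(\alpha+4)}$ alone, which makes every member of the resulting $\X$ recursive in $C^{(\alpha+4)}$ at the cost of the uniformity. I do not anticipate any real obstacle here: the corollary is a routine consequence of Lemmas~\ref{ess_splits_easy_1} and~\ref{ess_splits_easy_2} together with the basis-theorem machinery already deployed for the analogous statement about sentences. The one point genuinely requiring care is the bookkeeping of jump levels — tracking that $\U$ sits at level $\alpha+3$ (rather than $\alpha$, as the essential-sentence trees did), so that deciding infiniteness of its subtrees costs one further jump and lands at $\alpha+4$, which is why the final bound is $C^{(\alpha+4)}$ rather than $C^{(\alpha+1)}$.
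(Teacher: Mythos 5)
Your proposal takes essentially the same route as the paper: Lemma \ref{ess_splits_easy_1} plus the $\Pi^0_{\alpha+3}(C)$ bound of Lemma \ref{ess_splits_easy_2} feed into exactly the argument of Corollary \ref{ess_sent_easy_3}, with $C^{(\alpha+4)}$ deciding infiniteness above nodes of $\U$ and $S$ only used to locate a suitable node, after which it is discarded (which is also how the paper gets the members of $\X$ recursive in $C^{(\alpha+4)}$ at the cost of uniformity). One detail needs correcting: your algorithm searches for a node $\btau$ such that \emph{some} coordinate $\tau_i$ is not an initial segment of $S$, but that only guarantees $S\neq X_i$ for that single coordinate; since $S$ is never consulted again, the $C^{(\alpha+4)}$-extension of the remaining coordinates could still converge to $S$ (nothing rules out $S\leq_T C^{(\alpha+4)}$). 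As in the paper, you must search for a node \emph{all} of whose coordinates disagree with $S$ and above which $\U$ is infinite; such a node exists because the cone-avoiding path $Y$ of your first paragraph has every coordinate different from $S$, so some level $l$ witnesses disagreement in all $k$ coordinates simultaneously. With that quantifier fixed, the argument is correct and matches the paper's.
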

	
	\begin{proof}
		Suppose there is a size $k$ set $\X$ such that $(\Phi_0,\X)$ has no $\ang{e,\alpha}$-splits. Then, by Lemma \ref{ess_splits_easy_1}, $\U$ is infinite, and by Lemma $\ref{ess_splits_easy_2}$ it is uniformly $\Pi_{\alpha + 3}^0(C)$. By the same argument as for Corollary \ref{ess_sent_easy_3}, this tree has a path, $Y$, recursive in $C^{(\alpha + 4)}$, and yet, $S$ is not recursive in the members of the path, and consequently, $S$ is not on the path. Furthermore, by Lemma \ref{ess_splits_easy_1}, the reals $\X(Y)$ serve as the second coordinate of a condition $(\Phi_0,\X(Y))$, which has no $\ang{e,\alpha}$-splits below it.
		
		Now, for the uniform construction, we need to build a path through $\U$, a $\Pi^0_{\alpha + 3}(C)$ recursively bounded tree, and we need to make sure $S$ is not on the path. $C^{(\alpha + 4)}$ suffices to decide which members of the tree have an infinite part of the tree above them. Additionally $S$ can check whether a given node has coordinates all of which disagree with $S$, and as $\U$ is given uniformly in the mentioned parameters, we have the desired uniformity.
	\end{proof}
	
	\begin{cor}
		Suppose $p\in \P$ is a forcing condition such that $A\notin \X_p$, and $\ang{e,\alpha}$ a $C$-hyperarithmetic reduction. Then, there is an extension of $p$ which adds no new axioms applying to $A$ and does not have $A$ in the infinite part, which diagonalizes against $\ang{e,\alpha}^{C\oplus \Phi_G} = D$ i.e. either forces $\ang{e,\alpha}^{C\oplus\Phi_G}$ is not total or there is an $x$ such that it forces this computation on $x$ to be convergent, yet not equal to $D(x)$.
		\label{diagonalising_easy}
	\end{cor}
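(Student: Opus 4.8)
The plan is to run a splitting-tree dichotomy in the spirit of the usual Posner--Robinson / Sacks-splitting argument, while carrying the ``$A$-safety'' requirement throughout. Call a condition \emph{$A$-safe} if $A$ is not in its infinite part and it adjoins no axiom $\ang{x,y,\sigma}$ with $\sigma\subset A$. Recall the standing hypotheses of the easy case: $A$ is not $\Delta^0_\gamma(C)$ for any $C$-recursive $\gamma$, and $D\nleq_h C$. Writing $k=|\X_p|$, the dichotomy is on whether the tree $U_{\P,C}(\Phi_p,\ang{e,\alpha},k+1)$ is infinite; the extra coordinate is reserved for $A$, just as in Corollaries~\ref{ess_splits_easy_3} and~\ref{ess_sent_easy_4}.

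\emph{Infinite tree.} Using that $A\notin\Delta^0_{\alpha+3}(C)$, Corollary~\ref{ess_splits_easy_3} yields a set $\X$ of $k+1$ reals, all hyperarithmetic in $C$, with $A\notin\X$, such that $(\Phi_p,\X)$ has no $\ang{e,\alpha}$-splits below it; then $q_1:=(\Phi_p,\X_p\cup\X)$ is an $A$-safe extension of $p$ (it only enlarges the infinite part) with no $\ang{e,\alpha}$-splits below it. If some $x$ admits no extension of $q_1$ forcing $\ang{e,\alpha}^{C\oplus \Phi_G}(x)\downarrow$, then $q_1$ forces nontotality and we are done. Otherwise the no-split property assigns each $x$ a unique forced value $g(x)$, giving a total function $g$; chasing the complexity of the relevant forcing statements through Lemmas~\ref{ess_sent_easy_1} and~\ref{ess_sent_easy_2} (and using that the reals in $\X_p\cup\X$ are hyperarithmetic in $C$) shows $g\leq_h C$, so $g\neq D$. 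Fix $x$ with $g(x)\neq D(x)$ and apply Corollary~\ref{ess_sent_easy_4} to the $\Pi^r_{\alpha+1}(C)$-sentence $\neg[\ang{e,\alpha}^{C\oplus \Phi_G}(x)\downarrow=g(x)]$, starting from $q_1$: we get an $A$-safe $q\leq_\P q_1$ that either forces this sentence, or forces the negation of one of its conjuncts $(\forall\u_i)\neg\theta_i$ --- namely $\theta_i(\m)$ for some $\m$, which is a disjunct-instance of $\psi_0:=[\ang{e,\alpha}^{C\oplus \Phi_G}(x)\downarrow=g(x)]=\bigvee_i(\exists\u_i)\theta_i$, so clause~(8) of the forcing definition gives $q\forces\psi_0$ and $q$ diagonalizes. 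In the remaining case ($q\forces\neg\psi_0$): $q\leq_\P q_1$ still has no $\ang{e,\alpha}$-splits below it, so any extension of $q$ forcing convergence on $x$ must force value $g(x)$, hence force $\psi_0$ --- contradicting $q\forces\neg\psi_0$; and if no extension of $q$ forces convergence on $x$, then $q$ forces nontotality. So every sub-case succeeds.

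\emph{Finite tree.} Being a downward-closed, recursively-bounded subtree of a recursive tree, it is disjoint from some level $l$; hence the configuration $\btau=(X_1\restricted l,\dots,X_k\restricted l,\,A\restricted l)$ (for an enumeration $\X_p=\{X_1,\dots,X_k\}$) is not $(\P,C)$-essential to $\ang{e,\alpha}$-splits below $\Phi_p$. Unwinding this gives an $\ang{e,\alpha}$-split $r,s<_\P(\Phi_p,\emptyset)$, witnessed by some $x$ and distinct values $k_1,k_2$, none of whose new axioms has oracle-string compatible with any coordinate of $\btau$ --- so none applies to $A$ or to any $X_i\in\X_p$. Relabel so $k_1\neq D(x)$ and write $\psi_0:=[\ang{e,\alpha}^{C\oplus \Phi_G}(x)\downarrow=k_1]=\bigvee_i(\exists\u_i)\theta_i$, a $\Sigma^r_{\alpha+1}(C)$-sentence; by clause~(8), $r$ forces some $\theta_{i_0}(\m)$, a $\Pi^r_\beta(C)$-sentence with $\beta\leq\alpha$. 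Since $(\Phi_r,\X_r)\forces\theta_{i_0}(\m)$ and $A\notin\Delta^0_\beta(C)$, Corollary~\ref{ess_sent_easy_3} (for $\beta\geq1$; for $\beta=0$, directly by Lemma~\ref{quantifierfreeforcing}, whose witnessing reals may be taken off $A$) supplies a set $\X'$ of reals with $A\notin\X'$ and $(\Phi_r,\X')\forces\theta_{i_0}(\m)$. Set $q:=(\Phi_r,\X_p\cup\X')$: one checks $q\leq_\P p$ (using $\Phi_p\subseteq\Phi_r$ with the length clause of $\leq_\P$, that $\Phi_r$ adjoins no axiom with oracle-string below any $X_i\in\X_p$, and $\X_p\subseteq\X_p\cup\X'$), that $q$ is $A$-safe, and that $q\leq_\P(\Phi_r,\X')$, so $q\forces\theta_{i_0}(\m)$, hence $q\forces\psi_0$, i.e.\ $q$ forces $\ang{e,\alpha}^{C\oplus \Phi_G}(x)\downarrow=k_1\neq D(x)$.

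The infinite-tree case is a split-free variant of a standard argument; the substance is in the finite-tree case, and the main obstacle is producing an $A$-safe witness. The first point is that non-essentiality must be extracted at the right level \emph{for the particular configuration built from $\X_p$ together with $A$}, and it is this that forces the resulting split to avoid $A$. The second --- the crux --- is that one cannot simply delete $A$ (or other unwanted reals) from the infinite part of the raw split-witness $r$, since forcing of the existential sentence $\psi_0$ is not preserved under weakening; the remedy is to pass to the $\Pi^r_\beta(C)$-subformula $\theta_{i_0}(\m)$ that $r$ genuinely forces and re-realize it over the same finite part $\Phi_r$ by $A$-avoiding reals, which is legitimate precisely because $\theta_{i_0}(\m)$ has bounded complexity and $A\notin\Delta^0_\beta(C)$. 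A recurring bookkeeping obligation is that every real adjoined to an infinite part stays hyperarithmetic in $C$, so that the function $g$ of the infinite-tree case is $\leq_h C$ and the hypothesis $D\nleq_h C$ applies.
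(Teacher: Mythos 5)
Your overall architecture is sound and close to the paper's: the paper first uses Corollary \ref{ess_sent_easy_4} to decide totality $A$-safely and then splits into the split/no-split cases, whereas you run the dichotomy directly on $U_{\P,C}(\Phi_p,\ang{e,\alpha},k+1)$ with the extra coordinate reserved for $A$. Your finite-tree case is correct and, if anything, treats the $A$-safety of the split witness (discarding $\X_r$ and re-realizing the forced $\Pi^r_\beta(C)$ instance $\theta_{i_0}(\m)$ over the same finite part $\Phi_r$ by $A$-avoiding reals via Corollary \ref{ess_sent_easy_3}) more explicitly than the paper's corresponding sentence does.

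There is, however, a genuine gap in the infinite-tree case, at the step ``$g\leq_h C$''. You justify it ``using that the reals in $\X_p\cup\X$ are hyperarithmetic in $C$'', but $\X_p$ is the infinite part of the given condition $p$, about which the corollary assumes only $A\notin\X_p$; its members need not be hyperarithmetic in $C$ (and in the eventual master construction they generally are not, since reals from the $\omega$-models $\M_i$ and from other requirements get added along the way). Consequently a search over extensions of $q_1=(\Phi_p,\X_p\cup\X)$ is not a $C$-hyperarithmetic procedure: checking that a candidate finite part adds no axioms applying to members of $\X_p$ needs $\X_p$ as an oracle. The paper's proof avoids exactly this by discarding the arbitrary reals: it passes to the auxiliary split-free condition whose infinite part consists only of the $C^{(\alpha+4)}$-recursive reals supplied by Corollary \ref{ess_splits_easy_3}, searches $C^{(\alpha+4)}$-recursively for $\Phi_r$ extending the finite part, adding no axioms applying to those simple reals, with $T(\Phi_r,\ang{e,\alpha}^{C\oplus\Phi_G}(x)\downarrow=y,k)$ infinite, and then uses compatibility plus split-freeness to conclude that the value found is the determined one. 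The same repair works in your setup: compute the determined values below $(\Phi_p,\X)$ rather than below $q_1$; any witness $(\Phi_r,\X'')$ found there yields $(\Phi_r,\X''\cup\X)\leq_\P(\Phi_p,\X)$, and since $q_1\leq_\P(\Phi_p,\X)$ and $(\Phi_p,\X)$ has no $\ang{e,\alpha}$-splits below it, that value equals your $g(x)$. As written, though, the $C$-hyperarithmetic computability of $g$ --- the very fact that makes $g\neq D$ --- rests on a false premise, so this step needs the compatibility argument spelled out.
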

	
	\begin{proof}
		Fix $p$. The expression "$\ang{e,\alpha}^{C\oplus \Phi_G}$ is total" can be rendered in our forcing language as a $\Pi_{\alpha+2}^r(C)$ sentence. Then, by Corollary \ref{ess_sent_easy_4}, we can find a stronger condition $q$ neither adding $A$ to the infinite part nor any new axioms applying to $A$ to the finite part, deciding this sentence. If it forces it to be false (i.e. the reduction is not total), $q$ is the extension we want.
		
		Otherwise suppose $q$ forces totality. If $q$ has an $\ang{e,\alpha}^{C\oplus \Phi_G}$ split, then there is an $r\leq_\P q$ and an $x$ such that $r\forces \ang{e,\alpha}^{C\oplus \Phi_G}(x)\downarrow \neq D(x)$. As there is an $r\leq_\P q$ forcing this sentence, then by Corollary \ref{ess_sent_easy_3} there is such an $r$ which neither adds $A$ to the infinite part, nor adds any new computations applying to $A$ to the finite part. Such an $r$ is the extension we want.
		
		Now suppose $q$ has no such splits. Then, $\ang{e,\alpha}^{C\oplus \Phi_G}$ is determined by $q$. We claim we can hyperarithmetically in $C$ determine what set $q$ determines this to be, and so compute $\ang{e,\alpha}^{C\oplus \Phi_G}$ (as determined by $q$) hyperarithmetically in $C$, which implies it is not $D$.
		
		To see this, recall that as $q=\condition{q}$ has no splits, then there is a finite set of reals $\X$ (still not containing $A$) all of whose members are recursive in $C^{(\alpha+4)}$ such that $q'=(\Phi_q,\X)$ also has no splits, by Corollary \ref{ess_splits_easy_3}. Note $q'$ and $q$ are compatible.
		
		Now, for fixed $x$ and each $y$ and $k$, search for an $\Phi_r$ such that $(\Phi_r,\X) \leq_\P (\Phi_{q},\X)$, and $T(\Phi_r,\ang{e,\alpha}^{C\oplus \Phi_G}(x)\downarrow = y,k)$ is infinite. Checking the first condition is recursive in $C^{(\alpha+4)}$, and so too is the second, as, the tree is $\Pi_{\alpha+2}^0(C)$, uniformly in $\Phi_r,e,\alpha,x,y,k$, hence, this search is $C^{(\alpha+4)}$-recursive. Once we find such a $y,k$ we can output $y$ as the value of $\ang{e,\alpha}^{C\oplus \Phi_G}(x)$. As $q'$ and $q$ are compatible and $q'$ has no splits, $y$ must be the value that $q$ decides for $\ang{e,\alpha}^{C\oplus \Phi_G}(x)$. We know such $\Phi_r,y$, and $k$ exist, because if they didn't, then we could force nonconvergence of the computation at $x$, contrary to hypothesis.
			
		Thus, recursively in $C^{(\alpha+3)}$ we can compute what $q$ forces $\ang{e,\alpha}^{C\oplus \Phi_G}$ to be, and so, as $D$ is not hyperarithmetic in $C$, this computation does not compute $D$.
	\end{proof}
	
	So, we have shown that we can construct a generic sequence for the easy case, while still effecting the coding. We will postpone the proof of preservation of $\omega_1^{C}$ until we have proved results analogous to the above for the hard case.
	
\section{The hard case}
		In the previous section, we proved results which we will apply to $A$ and $C$, provided $A$ is not hyperarithmetic in $C$ i.e. if we are in the easy case. In the hard case, these lemmas no longer apply. This is the motivation for the forcing $\Q_{A,B}$ (which we abbreviate as $\Q$ in this section), where we build into the notion of forcing the lemmas that we can't push through for $\P$. Recall that, in the hard case, we choose representatives such that $A\leq_T C$. In this section when we say a condition forces a sentence, we mean $\forces_\Q$.
		
		Note that if we are successful in our goal of coding $B$ into the join of $A$ and $\Phi_G$, then in the hard case we will have
			\[
				C\oplus \Phi_G \geq_T A \oplus \Phi_G \geq_T B
			\]
		and, consequently, $C\oplus \Phi_G \geq_T B \oplus C$. Thus, we will attempt to preserve the fact that $B\oplus C\ngeq_h D$ in the hard case. This requires us to use the language $\infinitaryLB$ as our language of forcing for $\Q$.
		
		\begin{defn}[$(\Q,B\oplus C)$-essential to $\neg\psi$ over $\Phi_0$]
			If $\Phi_0$ is a finite \umtf which partially computes $B$ given $A$, $\psi$ is a $\Pi^r_\alpha(B\oplus C)$ sentence in $\infinitaryLB$, and $\btau$ a finite tuple of binary strings, all of the same length, we say $\btau$ is $(\Q,B\oplus C)$-essential to $\neg\psi$ over $\Phi_0$, if, whenever $q = \condition{q}\in \Q$ properly extends $(\Phi_0,\emptyset)$ and  $q\forces_\Q\neg\theta_i(\m)$ for one of the conjuncts $(\forall \u) \theta_i(\u)$ making up $\psi$ and some tuple $\m$ of the correct length, $\Phi_q\setminus\Phi_0$ contains an axiom $\ang{x,y,\sigma}$ such that $\sigma$ is compatible with some $\tau\in\btau$.
		\end{defn}
	We let $\TQC$ be the tree of essential tuples of length $k$, as before. We will abuse notation and omit the subscripts of $\T$. Again, $\T$ is a subtree of a recursively bounded recursive tree. Note that $\T$ could have branches which contain $A$ as the limit of one of the coordinates, meaning we can't use that branch as the set of reals for a $\Q$ condition. Given a path $Y$ in the tree $\T$ (or any of our trees) and a real $S$ we say \textbf{$S$ does not appear on $Y$} if $S$ is not the limit of any of the coordinates of $Y$.
	
	\begin{lem}
		Suppose that $\Phi_0$ is a finite \umtf which partially computes $B$ on input $A$, $\psi$ is a $\Pi_\alpha^r(B\oplus C)$ sentence with $\alpha > 0$ and $k$ is a natural number.
		\begin{enumerate}
			\item If there is a size $k$ set $\X$ of reals such that $(\Phi_0,\X)\in\Q$ and forces $\psi(\Phi_G)$, then $\T$ has an infinite path on which $A$ does not appear.
			\item If $\T$ has an infinite path on which $A$ does not appear, then each such path $Y$ is naturally identified with a size $k$ set $\X(Y)$ (not containing $A$) of reals such that $(\Phi_0,\X(Y))\in\Q$ and forces $\psi(\Phi_G)$.
		\end{enumerate}
		\label{ess_sent_hard_1}
	\end{lem}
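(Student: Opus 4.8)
The plan is to transcribe the proof of Lemma~\ref{ess_sent_easy_1} almost verbatim, replacing $\P$ by $\Q=\Q_{A,B}$, the forcing language $\infinitaryL$ by $\infinitaryLB$, and $\forces_\P$ by $\forces_\Q$, while tracking where the distinguished real $A$ can enter. Two points need extra care compared with the easy case: we cannot invoke K\"onig's lemma blindly in (1), since a K\"onig path through $\T$ might have $A$ as the limit of a coordinate; and the reals produced from paths must be checked to yield legitimate $\Q$-conditions, which is exactly where the hypothesis that $\Phi_0$ partially computes $B$ on input $A$ gets used. Everything else carries over because the relevant standard lemmas (extension preserves forcing, no condition forces a sentence and its formal negation, every condition has an extension deciding a given sentence) hold for $\forces_\Q$ over $\infinitaryLB$ by the analogous arguments.

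For (1), given $\X$ with $(\Phi_0,\X)\in\Q$ and $(\Phi_0,\X)\forces_\Q\psi$, enumerate $\X=\{X_1,\dots,X_k\}$ and set $\btau_l=(X_1\restricted l,\dots,X_k\restricted l)$. The sequence $Y_\X=(\btau_l)_{l\in\omega}$ is a path through the tree of equal-length $k$-tuples, and its $i$th coordinate has limit $X_i\neq A$ (as $(\Phi_0,\X)\in\Q$ forbids $A\in\X$), so $A$ does not appear on $Y_\X$. It remains to show each $\btau_l\in\T$. Suppose $q\in\Q$ properly extends $(\Phi_0,\emptyset)$ and $q\forces_\Q\neg\theta_i(\m)$ for a conjunct $(\forall\u)\theta_i(\u)$ of $\psi$; then $q\forces_\Q\neg\psi$, so $q$ and $(\Phi_0,\X)$ have no common $\Q$-extension (otherwise some condition would force both $\psi$ and $\neg\psi$). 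But $(\Phi_q,\X_q\cup\X)$ \emph{is} a $\Q$-condition — $\Phi_q$ still partially computes $B$ on $A$ since $q\in\Q$, and $A\notin\X_q\cup\X$ — and it extends $q$; hence it must fail to extend $(\Phi_0,\X)$, and of the three clauses of $\leq_\Q$ the only one that can fail is the third, so some $X_i\in\X$ carries a new axiom $\ang{x,y,\sigma}\in\Phi_q\setminus\Phi_0$ with $\sigma\subset X_i$, and this $\sigma$ is compatible with the $i$th coordinate $X_i\restricted l$ of $\btau_l$. Thus $\btau_l\in\T$ and $Y_\X$ witnesses (1).

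For (2), let $Y$ be an infinite path through $\T$ on which $A$ does not appear, and let $\X(Y)$ be the componentwise union of its coordinates. Since $A$ is not the limit of any coordinate, $A\notin\X(Y)$, and since $\Phi_0$ partially computes $B$ on $A$ this gives $(\Phi_0,\X(Y))\in\Q$. That $(\Phi_0,\X(Y))\forces_\Q\psi$ is now the same argument as in Lemma~\ref{ess_sent_easy_1}(2), with $\Q$-extensions throughout: if some $q\leq_\Q(\Phi_0,\X(Y))$ forced $\neg\theta_i(\m)$ (after, if necessary, a harmless proper extension adding a dummy axiom incompatible with every member of $\X(Y)$ and with $A$), essentiality of the nodes of $Y$ would produce an axiom in $\Phi_q\setminus\Phi_0$ compatible with a coordinate of every node of $Y$, hence, by the pigeonhole principle on the $k$ coordinates and passing to sufficiently deep nodes, an initial segment of some $X\in\X(Y)$, contradicting $q\leq_\Q(\Phi_0,\X(Y))$; so no $\Q$-extension of $(\Phi_0,\X(Y))$ forces $\neg\theta_i(\m)$, and as every $\Q$-extension has a further $\Q$-extension deciding $\theta_i(\m)$, the definition of $\forces_\Q$ yields $(\Phi_0,\X(Y))\forces_\Q\psi$.

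I do not expect a serious obstacle: this is a routine adaptation of Lemma~\ref{ess_sent_easy_1}. The only things demanding attention are the bookkeeping that keeps $A$ off the paths and out of the infinite parts (handled by producing $Y_\X$ explicitly in (1) and taken as a hypothesis in (2)) and the verification that the candidate common extension $(\Phi_q,\X_q\cup\X)$ in (1) and the witness $(\Phi_0,\X(Y))$ in (2) genuinely belong to $\Q$ — both of which reduce to the two defining restrictions of $\Q_{A,B}$, namely that the finite part partially computes $B$ on input $A$ and the infinite part omits $A$.
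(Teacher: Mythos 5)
Your proposal is correct and follows essentially the same route as the paper, whose own proof simply adapts Lemma \ref{ess_sent_easy_1} to $\Q$ with exactly the two checks you highlight (the finite part partially computes $B$ on input $A$, and $A$ stays out of the infinite part and off the path). Your extra bookkeeping --- verifying that $(\Phi_q,\X_q\cup\X)$ is a legitimate common $\Q$-extension in (1), and the dummy-axiom/pigeonhole remarks in (2) --- just spells out steps the paper leaves implicit.
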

	
	\begin{proof}
		The proof is similar to that of Lemma \ref{ess_sent_easy_1}. For the first claim, note that, for each $l$, $\btau_l=(X_1\restricted l,\ldots,X_k\restricted l)$ is essential (where $(X_1,\ldots,X_k)$ enumerates such an $\X$), as, any $q\in \Q$ extending $(\Phi_0,\emptyset)$ and forcing $\neg \theta_i(\m)$ for some conjunct $(\forall \u)\theta_i(\u)$ making up $\psi$ and some $\m$, must be incompatible with $(\Phi_0,\X)$. Consequently, there is a new axiom in $\Phi_q$ applying to some member of $\X$ and so compatible with the corresponding component of $\btau_l$. Clearly, $A$ does not appear on the path because $(\Phi_0,\X)\in \Q$.
		
		For the second claim, for any path $Y$ in $\T$ on which $A$ does not appear, $\X(Y)$ - the coordinatewise union of $Y$ - may serve as the infinite part of a $\Q$ condition. Then, as in Lemma \ref{ess_sent_easy_1}, if $q<_\Q(\Phi_0,\emptyset)$ forces $\neg\theta_i(\m)$ for a $\theta_i$ and an $\m$ as above, then $q$ must contain a new axiom compatible with some coordinate of each element of $Y$, and so, must apply to some $X\in \X(Y)$. Consequently, such a $q$ does not extend $(\Phi_0,\X)$ and so $(\Phi_0,\X)\forces_Q \psi$.
	\end{proof}
	
	\begin{lem}
		For each finite \umtf $\Phi_0$ that correctly computes $B$ on input $A$, each $\Pi_\alpha^r(B\oplus C)$ sentence $\psi$ in $\infinitaryLB$ with $\alpha > 0$, and each number $k$, $\T$ is $\Pi_{\alpha}^0(B\oplus C)$ uniformly in $\Phi_0$, $\psi$, and $k$.
		\label{ess_sent_hard_2}
	\end{lem}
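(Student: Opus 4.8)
The plan is to mimic the proof of Lemma \ref{ess_sent_easy_2}, with $B\oplus C$ in place of $C$; the one new feature is that Lemma \ref{ess_sent_hard_1} supplies, in place of "the essential tree is infinite," only "the essential tree has an infinite path on which $A$ does not appear," and the work is to see that this extra clause costs nothing. I would first rewrite membership in $\T$ exactly as in Lemma \ref{ess_sent_easy_2}: $\btau$ is $(\Q,B\oplus C)$-essential to $\neg\psi$ over $\Phi_0$ if and only if, for every finite use-monotone $\Phi_p\leq_\Q\Phi_0$ which partially computes $B$ on input $A$, every conjunct $(\forall\u)\theta_i(\u)$ of $\psi$, every tuple $\m$ of the correct length, and every $j$, if there is a size-$j$ set $\X$ with $A\notin\X$ and $(\Phi_p,\X)\in\Q$ forcing $\neg\theta_i(\m)$, then $\Phi_p\setminus\Phi_0$ contains an axiom $\ang{x,y,\sigma}$ with $\sigma$ compatible with some coordinate of $\btau$. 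The outer prefix is a block of number quantifiers (the quantifier over the $(B\oplus C)$-r.e.\ set of conjuncts of $\psi$ collapsing into it), the consequent is recursive, and checking $\Phi_p\leq_\Q\Phi_0$ and that $\Phi_p$ partially computes $B$ on $A$ is recursive in $A\oplus B$, hence in $B\oplus C$ since in the hard case $A\leq_T C$. So everything hinges on the antecedent "there is a size-$j$ set $\X$ with $A\notin\X$ and $(\Phi_p,\X)\forces_\Q\neg\theta_i(\m)$."

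The auxiliary observation I would isolate is this. For a subtree $T$ of the recursive, recursively bounded tree of $k$-tuples of equal-length binary strings, put, for $l\in\omega$,
\[
	T_l=\{\btau\in T:\ |\btau|<l\ \text{ or no coordinate of }\btau\restricted l\ \text{equals }A\restricted l\}.
\]
Since $A\leq_T C\leq_T B\oplus C$, each $T_l$ is again a subtree of the ambient recursive tree, and if $T$ is $\Pi^0_\beta(B\oplus C)$ then so is $T_l$, uniformly in $l$; moreover an infinite path of $T$ avoids $A$ exactly when it lies in $[T_l]$ for some $l$. Hence $T$ has an $A$-avoiding infinite path iff $(\exists l)(T_l$ is infinite$)$, and, $T_l$ being finitely branching, "$T_l$ is infinite" just asserts that $T_l$ meets every level of the ambient tree, which is of the same $\Pi^0_\beta(B\oplus C)$ complexity as $T$, uniformly in $l$. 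The key point is that when "$(\exists l)(T_l$ infinite$)$" occurs in the antecedent of the implication above, the "$(\exists l)$" moves out to an extra "$(\forall l)$" in the outer prefix, which does not raise the bound.

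With this I would run the induction on the rank $\alpha$ of $\psi$, over $B\oplus C$, exactly as in Lemma \ref{ess_sent_easy_2}. For $\alpha=1$ each $\neg\theta_i(\m)$ is quantifier-free, so by the $\Q$-clause of Lemma \ref{quantifierfreeforcing} the antecedent is decidable recursively in $A\oplus B\oplus(B\oplus C)=B\oplus C$ (and that proof lets us take the witnessing reals to avoid $A$), so being essential is $\Pi^0_1(B\oplus C)$. For successor $\alpha$, take the $\theta_i$ to be $\Sigma^r_{\alpha-1}(B\oplus C)$; by Lemma \ref{ess_sent_hard_1} the antecedent becomes "$T(\Phi_p,\neg\theta_i(\m),j)$ has an $A$-avoiding infinite path," and this tree is $\Pi^0_{\alpha-1}(B\oplus C)$ by the inductive hypothesis, so by the auxiliary observation the antecedent reads "$(\exists l)(\,T(\Phi_p,\neg\theta_i(\m),j)_l$ is infinite$)$" with each "$T(\cdots)_l$ infinite" a $\Pi^0_{\alpha-1}(B\oplus C)$ condition; pulling the "$(\exists l)$" into the outer universal block, being essential becomes a block of number quantifiers over a $\Sigma^0_{\alpha-1}(B\oplus C)$ matrix, i.e.\ $\Pi^0_\alpha(B\oplus C)$. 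For limit $\alpha$ the same reduction makes each "$T(\cdots)_l$ infinite" a $\Pi^0_{\beta_i}(B\oplus C)$ condition with $\beta_i<\alpha$ (conjuncts $\theta_i$ of rank $0$ being handled as in the base case), which $(B\oplus C)^{(\alpha)}$ decides uniformly, so being essential is co-r.e.\ in $(B\oplus C)^{(\alpha)}$, i.e.\ $\Pi^0_\alpha(B\oplus C)$. Uniformity in $\Phi_0$, $\psi$, and $k$ is inherited from Lemmas \ref{quantifierfreeforcing} and \ref{ess_sent_hard_1} and from the construction of the trees.

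The step I expect to need the most care is the auxiliary observation and the accompanying bookkeeping: that the restriction to paths avoiding $A$ is faithfully captured by the trees $T_l$ — this is the only place the hard-case hypothesis $A\leq_T C$ is used — and that the resulting extra quantifier lands in the antecedent of the implication, where it becomes a harmless universal, so that the final bound is $\Pi^0_\alpha(B\oplus C)$ and does not slip to $\Pi^0_{\alpha+1}(B\oplus C)$.
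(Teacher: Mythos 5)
Your proposal is correct and follows essentially the same route as the paper: the same two reformulations of essentiality, the same induction over $B\oplus C$ using Lemma \ref{quantifierfreeforcing} (with $A\leq_T C$ absorbing $A$) at the base and Lemma \ref{ess_sent_hard_1} at successors and limits, with the key point being that "has an infinite path avoiding $A$" costs only one extra number quantifier that lands harmlessly in the antecedent. Your $T_l$-trees are just a slightly more explicit bookkeeping of the paper's rendering of "some level beyond which a node avoiding $A$ exists at every level," so the bound $\Pi^0_\alpha(B\oplus C)$ comes out the same way.
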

	\begin{proof}
		We use the same strategy as for Lemma \ref{ess_sent_easy_2}. Fix $\btau$ of length $k$, $\Phi_0$ and $\psi$. Then, $\btau$ being essential to $\neg\psi$ over $\Phi_0$ is equivalent to both of the following
		\begin{align*}
		(\forall \Phi_q)(\forall \m )(\forall i)& [ ((\Phi_q,\emptyset)\leq_\Q (\Phi_0,\emptyset) \ \& \ (\exists\X)(\Phi_q,\X)\forces_\Q \neg\theta_i(\m))\\
		& \rightarrow (\exists \ang{x,y,\sigma})\text{ such that }\ang{x,y,\sigma}\in \Phi_q\setminus \Phi_0 \\
		&\hspace{8mm} \text{yet $\sigma$ is compatable with some $\tau\in\btau$})], 
		\end{align*}
		and
		\begin{align*}
		(\forall \Phi_q)(\forall \m )(\forall j)(\forall i)& [ ((\Phi_q,\emptyset)\leq_\Q (\Phi_0,\emptyset) \ \& \ (\exists\X,|\X| = j)(\Phi_q,\X)\forces_Q \neg\theta_i(\m))\\
		& \rightarrow (\exists \ang{x,y,\sigma})\text{ such that }\ang{x,y,\sigma}\in \Phi_q\setminus \Phi_0 \\
		& \hspace{8mm}\text{yet $\sigma$ is compatable with some $\tau\in\btau$})].
		\end{align*}
		(Again the $(\forall i)$ is a universal quantifier over a $(B\oplus C)$-r.e. set.) 
		
		Now suppose $\alpha = 1$, then by Lemma \ref{quantifierfreeforcing}, the antecedent in the first formulation is $A\oplus B\oplus C$-recursive uniformly in $\Phi_0,\Phi_q, \theta_i$ and $\m$. As $C\geq_T A$, we can ignore $A$, and then $\btau$ being essential can be rendered as a property with two universal natural number quantifiers, then a universal quantifier over a $(B\oplus C)$-r.e. set, then a $B\oplus C$-recursive matrix, uniformly in the parameters. Hence, this is a $\Pi_1^0(B\oplus C)$ property.
		
		For the inductive steps, firstly, suppose $\alpha > 1$ is a successor. We now consider the second equivalent definition of $\btau$ being essential. By Lemma \ref{ess_sent_hard_1}, "$(\exists \X,|\X|=j)(\Phi_q,\X)\forces \neg\theta_i(\m)$" is equivalent to $T(\Phi_q,\neg\theta_i(\m),j)$ having a path on which $A$ does not appear. We may render this as: Does there exists a length $l$ so that for every length $l'>l$ there exists a node on the tree at level $l'$ each coordinate of which is not an initial segment of $A$? Because $T(\Phi_q,\neg\theta_i(\m),j)$ is a subtree of a recursively bounded recursive tree, the second existential quantifier is actually a bounded existential quantifier. As, by induction, $T(\Phi_q,\neg\theta_i(\m),j)$ is $\Pi^0_{\alpha-1}(B\oplus C)$ and $A\leq_T C$ so the tree having a path on which $A$ does not appear is $\Sigma^0_{\alpha}(B\oplus C)$. 
		
		As this is the antecedent of a conditional, we flip to $\Pi^0_{\alpha}(B\oplus C)$ and then our property is expressed via an initial block of universal quantifiers, and then a $\Pi^0_{\alpha}(B\oplus C)$ matrix, for $\Pi^0_{\alpha}(B\oplus C)$ as required.
		
		Finally, suppose $\alpha>1$ is a limit. We use the second formulation of $\btau$ being essential. Now, by induction, $(B\oplus C)^{(\alpha)}$ can (recursively) decide whether $T(\Phi_p,\neg\theta_i(\m), j)$ has an infinite path on which $A$ does not appear, for any conjunct $(\forall \u)\theta_i(\u)$ of $\psi$. Consequently, when $\alpha$ is a limit, the matrix of the second formulation for $\btau$ being essential is recursive in $(B\oplus C)^{(\alpha)}$, and the prefix is a block of universals for $\Pi_{\alpha}^0(B\oplus C)$ as required.
	\end{proof}
	
	\begin{cor}
		Let $\Phi_0$ be a finite use-monotone Turing functional, $\psi$ a $\Pi^r_\alpha(B\oplus C)$ sentence, and $k$ a natural number. If there is a size $k$ set $\X$ such that $A\notin X$ and $(\Phi_0,\X)\forces_\Q \psi$, then we can - recursively in $(B\oplus C)^{(\alpha+1)}$ - find such an $\X$, uniformly in $\psi,k$ (of course, such a set does not contain $A$).
		\label{ess_sent_hard_3}
	\end{cor}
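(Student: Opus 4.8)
The plan is to run the argument of Corollary~\ref{ess_sent_easy_3}, now with Lemmas~\ref{ess_sent_hard_1} and \ref{ess_sent_hard_2} in place of Lemmas~\ref{ess_sent_easy_1} and \ref{ess_sent_easy_2}, and exploiting the standing hypothesis of the hard case that $A\leq_T C$. We may assume $\alpha>0$ (the quantifier-free case being covered by Lemma~\ref{quantifierfreeforcing}, whose procedure for $\Q$ is $A\oplus B\oplus C$-recursive, hence $(B\oplus C)$-recursive since $A\leq_T C$, and already avoids $A$). Since writing $(\Phi_0,\X)\forces_\Q\psi$ presupposes $(\Phi_0,\X)\in\Q$, and $A\notin\X$, Lemma~\ref{ess_sent_hard_1}(1) applies: the tree $\T=\TQC$ has an infinite path on which $A$ does not appear. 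By Lemma~\ref{ess_sent_hard_2}, $\T$ is $\Pi^0_\alpha(B\oplus C)$ uniformly in $\psi$ and $k$, and it is a subtree of a recursively bounded recursive tree. Hence $(B\oplus C)^{(\alpha+1)}$ decides, uniformly, membership in $\T$, and also the predicate ``$v$ has an infinite part of $\T$ above it'': by recursive bounding this is ``$(\forall n)$ ($v$ has a length-$n$ extension in $\T$)'', a universal number quantifier over a finite disjunction of $\Pi^0_\alpha(B\oplus C)$ facts, hence itself $\Pi^0_\alpha(B\oplus C)$.

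What makes the hard case cleaner than Corollary~\ref{ess_sent_easy_3} is that we need only keep $A$ off the path as a \emph{limit}, not merely non-computable from it, and moreover $A\leq_T C\leq_T(B\oplus C)^{(\alpha+1)}$, so $A$ itself is available to the oracle $(B\oplus C)^{(\alpha+1)}$ throughout. The construction is then: search $(B\oplus C)^{(\alpha+1)}$-recursively through the tree of length-$k$ vectors of equal-length binary strings for a node $v$ with $v\in\T$, with $v$ having an infinite part of $\T$ above it, and with every coordinate of $v$ incompatible with $A$. Such a $v$ exists --- take the path $Y_0$ given by Lemma~\ref{ess_sent_hard_1}(1) and let $l$ be large enough that all $k$ coordinates of $Y_0$ have branched off $A$ by level $l$, and put $v=Y_0\restricted l$. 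Having found $v$, extend it $(B\oplus C)^{(\alpha+1)}$-recursively to an infinite path $Y$ of $\T$ (at each step moving to a node still having an infinite part of $\T$ above it). Then $A$ does not appear on $Y$, so by Lemma~\ref{ess_sent_hard_1}(2) the coordinatewise union $\X(Y)$ is a size $k$ set with $A\notin\X(Y)$, $(\Phi_0,\X(Y))\in\Q$ and $(\Phi_0,\X(Y))\forces_\Q\psi$; and each member of $\X(Y)$ is recursive in $(B\oplus C)^{(\alpha+1)}$, uniformly in $\psi$ and $k$, since that oracle built $Y$ by a procedure uniform in the $\Pi^0_\alpha(B\oplus C)$ index for $\T$ furnished by Lemma~\ref{ess_sent_hard_2}.

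I expect no genuine obstacle here, only bookkeeping: the one point needing care is verifying that ``$v$ extends to an infinite path of $\T$'' is $\Pi^0_\alpha(B\oplus C)$ --- hence decidable from the $(\alpha+1)$st jump --- which is exactly the recursive-bounding observation already used in Corollaries~\ref{ess_sent_easy_3} and \ref{ess_splits_easy_3}. It is worth flagging that, unlike in Corollary~\ref{ess_sent_easy_3}, no uniformity is lost: there one had to stop consulting the unwanted oracle $S$ once past the branching node, whereas here $A$ is a permitted oracle for $(B\oplus C)^{(\alpha+1)}$, so it may be used at every stage and the whole construction stays uniform.
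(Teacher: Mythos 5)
Your proposal is correct and follows essentially the same route as the paper: apply Lemma \ref{ess_sent_hard_1} to get a path avoiding $A$, use the $\Pi^0_\alpha(B\oplus C)$ bound of Lemma \ref{ess_sent_hard_2} so that $(B\oplus C)^{(\alpha+1)}$ decides membership, extendibility in $\T$, and incompatibility with $A$ (which is available since $A\leq_T C$), and then build the path, invoking Lemma \ref{ess_sent_hard_1} again to get $(\Phi_0,\X(Y))\forces_\Q\psi$. Your added remarks on the quantifier-free case and on why no uniformity is lost are fine elaborations of the same argument.
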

	
	\begin{proof}
		If there is such a set $\X$, then by Lemma \ref{ess_sent_hard_1}, $\T$ has an infinite path on which $A$ does not appear. By Lemma \ref{ess_sent_hard_2}, $\T$ is a $\Pi^0_{\alpha}(B\oplus C)$ subtree of a recursively bounded recursive tree. Consequently, $(B\oplus C)^{(\alpha+1)}$ can decide membership in $\T$, can decide whether the tree is infinite above any given node, and can decide whether any node is incompatible with $A$. Thus, $(B\oplus C)^{(\alpha+1)}$ can construct a path $Y$ in $\T$ on which $A$ does not appear, and then, $(\Phi_0,\X(Y))\forces_\Q \psi$. 
	\end{proof}
	
	\begin{cor}
		Let $\psi$ be a $\Pi^r_\alpha(B\oplus C)$ sentence and $k$ a natural number. For any condition $q\in \Q$ there is a stronger $r\leq_\Q q$ which we can find uniformly in $\Phi_q$ and $(B\oplus C)^{(\alpha+2)}\oplus \X_q$ such that every new $X\in \X_r$ is recursive in $(B\oplus C)^{(\alpha+1)}$, and $r$ decides $\psi$ (and we know whether $r\forces_\Q \psi$ or $r\forces_\Q \neg \psi$).
		\label{ess_sent_hard_4}
	\end{cor}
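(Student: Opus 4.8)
The plan is to run the $\Q$-analogue of the proof of Corollary~\ref{ess_sent_easy_4}. The key simplification is that in $\Q$ every condition already computes $B$ correctly from $A$ on the axioms it carries, so no analogue of clause~(2) of that corollary is needed and one can work directly with the tree of essential $k$-tuples rather than passing to $(k+1)$-tuples as in the easy case. Fix $q=\condition{q}\in\Q$, let $k=|\X_q|$ and enumerate $\X_q=\{X_1,\dots,X_k\}$. The cases $\alpha\leq 1$ are straightforward via Lemma~\ref{quantifierfreeforcing}, so assume $\alpha>1$ and (by Ash and Knight, padding if necessary) that each conjunct $(\forall\u_i)\theta_i(\u_i)$ of $\psi$ has $\theta_i$ a $\Sigma^r_{\beta_i}(B\oplus C)$ sentence with $0<\beta_i<\alpha$. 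The first observation is that, by Lemma~\ref{ess_sent_hard_1}, $q\forces_\Q\psi$ if and only if $(X_1,\dots,X_k)$ is an infinite path of the tree $T_\Q(\Phi_q,\psi,k)$ of length-$k$ tuples essential to $\neg\psi$ over $\Phi_q$: the forward direction is the essentiality argument of part~(1) with $\X=\X_q$, and the converse is part~(2) applied to this path, on which $A$ does not appear (as $A\notin\X_q$) and whose associated set of reals is $\X_q$ itself. Since that tree is $\Pi^0_\alpha(B\oplus C)$ uniformly (Lemma~\ref{ess_sent_hard_2}), deciding whether $(X_1\restricted\ell,\dots,X_k\restricted\ell)$ belongs to it for every $\ell$ can be done from $(B\oplus C)^{(\alpha+2)}\oplus\X_q$, uniformly in $\Phi_q$, exactly as in Corollary~\ref{ess_sent_easy_4}; if it does, set $r=q$ and we are done.

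Otherwise some $(X_1\restricted\ell,\dots,X_k\restricted\ell)$ fails to be essential, so by the definition of essentiality there are a conjunct $\theta$ of $\psi$, a tuple $\m$, and a condition $q'\leq_\Q(\Phi_q,\emptyset)$ with $q'\forces_\Q\neg\theta(\m)$ such that no axiom of $\Phi_{q'}\setminus\Phi_q$ is compatible with any of $X_1\restricted\ell,\dots,X_k\restricted\ell$; in particular no axiom of $\Phi_{q'}\setminus\Phi_q$ applies to any $X_j\in\X_q$, so $(\Phi_{q'},\X_q)\leq_\Q q$, and it does no harm that $\Phi_{q'}$ may add axioms applying to $A$, since $\Q$ forces those correct. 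Writing $\beta<\alpha$ for the rank of $\theta$, and using $q'\in\Q$, Lemma~\ref{ess_sent_hard_1}(1) applied to the $\Pi^r_\beta(B\oplus C)$ sentence $\neg\theta(\m)$ yields a $j$ with $T_\Q(\Phi_{q'},\neg\theta(\m),j)$ having an infinite path on which $A$ does not appear. I would then search, uniformly in $\Phi_q$ and recursively in $(B\oplus C)^{(\alpha+2)}\oplus\X_q$, for a quadruple $(\Phi',\theta,\m,j)$ with $\theta$ a conjunct of $\psi$, $\Phi'\leq_\Q\Phi_q$, no axiom of $\Phi'\setminus\Phi_q$ applying to a member of $\X_q$, and $T_\Q(\Phi',\neg\theta(\m),j)$ having an infinite path on which $A$ does not appear: the first three conditions are $B\oplus C\oplus\X_q$-recursive to check (using $A\leq_T C$), and the last is, by Lemma~\ref{ess_sent_hard_2} and the analysis of paths avoiding $A$ carried out in its proof, uniformly a $\Sigma^0_{\beta+1}(B\oplus C)$ condition with $\beta<\alpha$ the rank of $\theta$, so the whole search stays within $(B\oplus C)^{(\alpha+2)}\oplus\X_q$. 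Such a quadruple exists by the previous sentence, so the search halts; feeding $(\Phi',\neg\theta(\m),j)$ into Corollary~\ref{ess_sent_hard_3} then produces, uniformly and recursively in $(B\oplus C)^{(\beta+1)}$, a finite set $\Y$ with $A\notin\Y$ whose members are recursive in $(B\oplus C)^{(\alpha+1)}$ and with $(\Phi',\Y)\forces_\Q\neg\theta(\m)$. Put $r=(\Phi',\X_q\cup\Y)$: one checks $r\in\Q$ (as $\Phi'$ computes $B$ correctly from $A$ and $A\notin\X_q\cup\Y$), $r\leq_\Q q$, every real in $\X_r\setminus\X_q$ is recursive in $(B\oplus C)^{(\alpha+1)}$, and $r\leq_\Q(\Phi',\Y)$, so $r\forces_\Q\neg\theta(\m)$ and hence $r\forces_\Q\neg\psi$ (the negation of $\psi$ being an r.e. disjunction of existential formulas, one of whose instances $r$ forces), with the witnessing conjunct and $\m$ known.

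The step I expect to be the main obstacle is the first dichotomy. In the easy case one asks whether the essentiality tree is infinite, but here that tree may have infinite paths along which $A$ appears, and Corollary~\ref{ess_sent_easy_3}'s device of threading through a set that is not $\Delta^0_\alpha(C)$ is unavailable since $A\leq_T C$. The resolution is that in $\Q$ infiniteness of the tree is the wrong question; the right question is whether $q$ itself forces $\psi$, equivalently whether the particular path $(X_1,\dots,X_k)$ read off from $q$ lies in the tree. Granting that, the remainder is a faithful transcription of Corollary~\ref{ess_sent_easy_4}, with $C$ replaced by $B\oplus C$ and infiniteness of a tree replaced by existence of a path on which $A$ does not appear, the book-keeping — the extracted finite part avoiding $\X_q$, the reals supplied by Corollary~\ref{ess_sent_hard_3} avoiding $A$ and being $(B\oplus C)^{(\alpha+1)}$-recursive, and all searches staying below $(B\oplus C)^{(\alpha+2)}$ — running exactly in parallel.
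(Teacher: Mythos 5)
There is a genuine gap, and it sits exactly at the step you flag as the ``main obstacle.'' You reduce the corollary to deciding whether $q$ itself forces $\psi$, i.e.\ whether every truncation $(X_1\restricted\ell,\ldots,X_k\restricted\ell)$ lies in $T_\Q(\Phi_q,\psi,k)$, and you claim this is decidable from $(B\oplus C)^{(\alpha+2)}\oplus\X_q$ ``exactly as in Corollary~\ref{ess_sent_easy_4}.'' It is not, and that corollary does nothing of the sort: there one only decides whether a tree determined by $\Phi_0,\psi,k$ is infinite, a question about $C$ alone. Your question quantifies over $\ell$ with a matrix computed from the reals in $\X_q$: for each fixed $\ell$, membership of $\btau_\ell$ in the tree is decidable in $(B\oplus C)^{(\alpha+1)}$, so ``for all $\ell$, $\btau_\ell\in T_\Q(\Phi_q,\psi,k)$'' is a $\Pi^0_1$ question relative to $(B\oplus C)^{(\alpha+1)}\oplus\X_q$, and deciding it uniformly requires the jump of that join. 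Since the members of $\X_q$ are arbitrary reals, this is not computable from $(B\oplus C)^{(\alpha+2)}\oplus\X_q$: for instance, if some $X\in\X_q$ computes $(B\oplus C)^{(\alpha+2)}$, the permitted oracle is Turing-equivalent to the join of $\X_q$, while your question can be as hard as its jump. Nor can you dodge the decision by launching the quadruple search and waiting: when $q\forces_\Q\psi$ that search provably never halts, since a successful quadruple would yield an extension of $q$ forcing $\neg\theta_i(\m)$, contradicting that extensions preserve forcing and no condition forces a sentence and its negation.

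The paper avoids this by never deciding whether $q$ forces $\psi$. Its case distinction concerns $T_\Q(\Phi_q,\psi,k)$ alone: is it infinite (decidable in $(B\oplus C)^{(\alpha+1)}$), and if so, does it have a path on which $A$ does not appear (decidable in $(B\oplus C)^{(\alpha+2)}$, using $A\leq_T C$) --- questions in which $\X_q$ never enters a jump. In the affirmative case the output is not $q$ but $r=(\Phi_q,\X_q\cup\X(Y))$ for a constructed path $Y$, which forces $\psi$; in the remaining cases (tree finite, or infinite with every path containing $A$) one argues that $q$ cannot force $\psi$, so a search of the kind you describe is guaranteed to succeed, and your second branch --- which is essentially the paper's Cases 2 and 3 merged, and is otherwise fine --- goes through with oracle $(B\oplus C)^{(\alpha+1)}\oplus\X_q$. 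So the repair is to replace ``is $\X_q$ a path through the tree'' by ``does the tree have a path avoiding $A$,'' accepting that $r$ may properly extend $q$ with new $(B\oplus C)^{(\alpha+1)}$-recursive reals even when $q$ already forces $\psi$; with that change your argument becomes the paper's proof.
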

	\begin{proof}
		
		Fix a $q\in \Q$ and enumerate $\X_q$ as $X_1,\ldots,X_k$. There are three cases to consider: Either $T(\Phi_q,\psi,k)$ has an infinite path on which $A$ does not appear, or $T(\Phi_q,\psi,k)$ is finite, or $T(\Phi_q,\psi,k)$ is infinite (and so has a path) but every such path contains $A$.
		
		$T(\Phi_q,\psi,k)$ is a $\Pi_{\alpha}^0(B\oplus C)$ subtree of a recursively bounded recursive tree by given uniformly in the parameters, by Lemma \ref{ess_sent_hard_2}. Hence, the question of $T(\Phi_q,\psi,k)$ being infinite is uniformly $\Pi^0_{\alpha}(B\oplus C)$ and so is decidable in $(B\oplus C)^{(\alpha +1)}$. Furthermore, if it is infinite, $(B\oplus C)^{(\alpha+2)}$ can decide whether there is a path on which $A$ does not appear, because $A$ is recursive in $C$. Therefore, $(B\oplus C)^{(\alpha+2)}$ can uniformly determine which case we are in.
		
		\emph{Case 1:} If we find $T(\Phi_q,\psi,k)$ has an infinite path on which $A$ does not appear $(B\oplus C)^{(\alpha+1)}$ suffices to construct such a path $Y$ as in Corollary \ref{ess_sent_hard_3}. By Lemma \ref{ess_sent_hard_1} $(\Phi_q,\X(Y))\forces_\Q \psi$ and so $r = (\Phi_q, \X_q\cup \X(Y))$ suffices and was found uniformly. 
		
		\emph{Case 2:} In this case $T(\Phi_q,\psi,k)$ is finite and so $(X_1,\ldots,X_k)$ does not form a path through it. Therefore, for some $l$, $\btau_l = (X_1\restricted l,\ldots,X_k\restricted l)$ is not essential to $\neg\psi$ over $\Phi_q$. Then there is some $p <_\Q (\Phi_q,\emptyset)$, some conjunct $\theta_i(\u)$ of $\psi$ and an $\m$ of the correct length such that, $p\forces_\Q \neg \theta_i(\m)$ yet no $\ang{x,y,\sigma}\in \Phi_p\setminus \Phi_q$ is compatible with any component of $\btau_l$. As $p\forces \neg \theta_i(\m)$ there is a $j$ such that $T(\Phi_p,\neg\theta_i(\m),j)$ has an infinite path on which $A$ does not appear by Lemma \ref{ess_sent_hard_1}.
		
		$(B\oplus C\oplus \X_q)$ can recursively find all $\Phi_p$ such that $\Phi_p\leq_\Q \Phi_q$ and adds no computations applying to any $X\in\X_q$. Also for fixed $\theta_i,\m,j$ the question of whether $T(\Phi_p,\neg\theta_i(\m),j)$ has an infinite path on which $A$ does not appear can be decided by $(B\oplus C)^{(\alpha+1)}$ (as $T(\Phi_p,\neg\theta_i(\m),j)$ at least one level of complexity lower than $T(\Phi_p,\psi,j)$) and so $(B\oplus C)^{(\alpha+1)}\oplus \X_q$ suffices to find such a $\Phi_p,\theta_i,\m$ and $j$ and to construct a path $Y$ in the tree, and so $r = (\Phi_p,\X(Y)\cup \X_q)$ is the condition we want.
		
		\emph{Case 3:} In the final case $T(\Phi_q,\psi,k)$ is infinite, but every path contains $A$. We claim that $q$ does not force $\psi$, because if $q\forces_\Q \psi$, then by Lemma \ref{ess_sent_hard_1}, $\X_q$ forms a branch of $T(\Phi_q,\psi,k)$ not containing $A$. As $\psi$ is a universal sentence and $q$ does not force it, there is some $p\leq_\Q q$, some conjunct $\theta_i(\u)$ and some tuple $\m$ such that for every $p'\leq_\Q p$, $p'$ does not force $\theta_i(\m)$. Therefore, $p\forces \neg \theta_i(\m)$.
		
		For such an $p$ there is a $j$ such that $T(\Phi_p,\neg\theta_i(\m),j)$ has a path $Y$ on which $A$ does not appear, and so by Corollary \ref{ess_sent_hard_3} $(B\oplus C)^{(\alpha+1)}$ can find such a $\Phi_p,\theta_i,\m$ and then construct a path $Y$, so then, the condition we want is $r =( \Phi_p,\X_q\cup \X(Y))$. 
	\end{proof}
	
	The reader may be wondering why we are proving theorems which are the analogues of theorems which prevent us from adding $A$ to the infinite part of a $\P$ condition, when all $\Q$ conditions have that built in. It is, in fact, the bounds on the complexity of the forcing relation that we want. These bounds will allow us to diagonalize against computing $D$.
	
	\begin{defn}[$(\Q,B\oplus C)$-Essential to splits]
		Let $\Phi_0$ be a finite \umtf which correctly computes $B$ on input $A$, and $\ang{e,\alpha}$ a $(B\oplus C)$-hyperarithmetic reduction. For each $\btau$, a finite sequence of binary strings of the same length, we say $\btau$ is \textbf{$(\Q,B\oplus C)$-essential to $\ang{e,\alpha}$-splits below $\Phi_0$} if whenever $q,r\leq_\Q (\Phi_0,\emptyset)$ form an $\ang{e,\alpha}$-split (i.e. there is an $x$ such that $q\forces_\Q \ang{e,\alpha}^{B\oplus C\oplus \Phi_G}(x)\downarrow = k_1,r\forces_\Q \ang{e,\alpha}^{B\oplus C\oplus\Phi_G}(x)\downarrow = k_2$ and $k_1\neq k_2$) there is some new axiom $\ang{x,y,\sigma}\in\Phi_q \cup \Phi_r$ but not in $\Phi_0$ such that $\sigma$ is compatible with some component of $\btau$.
	\end{defn}
	
	We let $\UQC$ denote the set of length $k$ vectors $\btau$ which are $(\Q,B\oplus C)$-essential to $\ang{e,\alpha}$-splits below $\Phi_0$. As with $\UPC$, this set can be considered as a subtree of a recursively bounded recursive tree. We drop the $\Q$ and $B\oplus C$ decorations wherever possible.
	
		\begin{lem}
			Let $\Phi_0$ be a Turing functional which partially computes $B$ on input $A$, $\ang{e,\alpha}$ a $B\oplus C$-hyperarithmetic reduction, and $k$ a natural number.
			\begin{itemize}
				\item If $\X$ is a size $k$ set such that $(\Phi_0,\X)\in \Q$, and no pair $q,r\leq_\Q (\Phi_0,\X)$ form an $\ang{e,\alpha}$-split, then $\U$ has an infinite path on which $A$ does not appear.
				\item If $\U$ has an infinite path on which $A$ does not appear, then each such path $Y$ is identified with a size $k$ set $\X(Y)$ such that $(\Phi_0,\X(Y))\in\Q$ has no $\ang{e,\alpha}$-splits below it.
			\end{itemize}
			\label{ess_splits_hard_1}
		\end{lem}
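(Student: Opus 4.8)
The plan is to transcribe the proof of Lemma~\ref{ess_splits_easy_1} almost verbatim, inserting in each direction the bookkeeping that tracks where $A$ may occur among the coordinates of a path --- the same adaptation already used for the $\Q$-versions of the earlier ``essential-to-$\neg\psi$'' lemmas. For the first bullet, I would enumerate $\X=\{X_1,\dots,X_k\}$ (here $A\notin\X$ because $(\Phi_0,\X)\in\Q$) and put $\btau_l=(X_1\restricted l,\dots,X_k\restricted l)$. It suffices to show each $\btau_l\in\U$: the $\btau_l$ being nested, they then form an infinite path $Y$ of $\U$, and since the $i$-th coordinate of $\btau_l$ converges to $X_i$ and no $X_i$ is $A$, the real $A$ does not appear on $Y$. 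To see $\btau_l\in\U$, let $q,r\leq_\Q(\Phi_0,\emptyset)$ form an $\ang{e,\alpha}$-split. Not both of $q,r$ can be compatible with $(\Phi_0,\X)$, since a common extension of $q$ with $(\Phi_0,\X)$ and one of $r$ with $(\Phi_0,\X)$ would, as extension preserves forcing, constitute an $\ang{e,\alpha}$-split below $(\Phi_0,\X)$, contrary to hypothesis. Say $q$ is incompatible with $(\Phi_0,\X)$; then $\Phi_q\setminus\Phi_0$ must contain an axiom $\ang{x,y,\sigma}$ with $\sigma\subset X_i$ for some $i$ --- otherwise $(\Phi_q,\X_q\cup\X)$ would be a condition of $\Q$ (its finite part still partially computes $B$ on input $A$, and $A\notin\X_q\cup\X$) extending both $q$ and $(\Phi_0,\X)$. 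Such a $\sigma$ is compatible with the $i$-th coordinate of $\btau_l$, so $\btau_l$ is essential to $\ang{e,\alpha}$-splits below $\Phi_0$, as wanted.

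For the second bullet, given an infinite path $Y$ of $\U$ on which $A$ does not appear, I would take $\X(Y)$ to be the coordinatewise union of $Y$; then $|\X(Y)|=k$, $A\notin\X(Y)$, and hence $(\Phi_0,\X(Y))\in\Q$. Suppose $q,r\leq_\Q(\Phi_0,\X(Y))$ form an $\ang{e,\alpha}$-split; then they also split below $(\Phi_0,\emptyset)$. Essentiality of each node $\btau_l$ of $Y$ yields, for every $l$, an axiom of $(\Phi_q\cup\Phi_r)\setminus\Phi_0$ compatible with some coordinate of $\btau_l$; since $\Phi_q\cup\Phi_r$ is finite and there are only $k$ coordinates, pigeonhole produces a single axiom $\ang{x,y,\sigma}$ --- say, without loss of generality, in $\Phi_q$ --- compatible with one fixed coordinate $i$ of $\btau_l$ for cofinitely many $l$, whence $\sigma\subset X_i$ for the $i$-th member $X_i$ of $\X(Y)$. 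But then $q$ adds a new axiom applying to $X_i\in\X(Y)$, so $q\nleq_\Q(\Phi_0,\X(Y))$, a contradiction; hence $(\Phi_0,\X(Y))$ has no $\ang{e,\alpha}$-splits below it.

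I do not expect a genuine obstacle, as this is the exact $\Q$-analogue of Lemma~\ref{ess_splits_easy_1} and the proof is a routine transcription. The two points to watch are: checking that the auxiliary conditions obtained by enlarging an infinite part really do stay inside $\Q$ (which is precisely where the defining feature $A\notin\X$ of $\Q$-conditions is used, and where the hypothesis that $\Phi_0$ partially computes $B$ on input $A$ matters), and the pigeonhole step that converts the $l$-dependent witnessing axiom of a split into a single axiom lying below one fixed coordinate --- the latter being already implicit in the proof of Lemma~\ref{ess_splits_easy_1}.
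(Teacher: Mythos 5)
Your proposal is correct and follows exactly the route the paper intends: the paper's own proof of this lemma is just the remark that one adapts Lemma \ref{ess_splits_easy_1} with the same modifications used to turn Lemma \ref{ess_sent_easy_1} into Lemma \ref{ess_sent_hard_1}, which is precisely what you carry out (tracking that $A$ never appears among the coordinates and that the auxiliary conditions remain in $\Q$). Your explicit pigeonhole step in the second bullet is a detail the paper leaves implicit, but it is the same argument.
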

		
		\begin{proof}
			The strategy is similar to that of Lemma \ref{ess_splits_easy_1}, with the same modifications needed to turn a proof of Lemma \ref{ess_sent_easy_1} into one of Lemma \ref{ess_sent_hard_1}.
		\end{proof}
		
		\begin{lem}
			If $\Phi_0$ is a Turing functional which partially computes $B$ on input $A$, $\ang{e,\alpha}$ a $(B\oplus C)$-hyperarithmetic reduction, and $k$ is a natural number, then $\U$ is $\Pi_{\alpha+3}(B\oplus C)$, uniformly in the defining parameters. Furthermore if there exists a size $k$ set $\X$, such that $(\Phi_0,\X)\in \Q$ has no $\ang{e,\alpha}$-splits below it, then we can find such an $\X$ not containing $A$ all of whose members are recursive in $(B\oplus C)^{(\alpha+4)}$ (it is also uniform, although this is not needed).
			\label{ess_splits_hard_2}
		\end{lem}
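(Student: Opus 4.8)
The plan is to run the argument in close parallel to Lemmas \ref{ess_splits_easy_1} and \ref{ess_splits_easy_2}, applying exactly the modifications that convert the proof of Lemma \ref{ess_sent_easy_2} into that of Lemma \ref{ess_sent_hard_2}: replace $\P$ by $\Q$, replace the oracle $C$ by $B\oplus C$, replace each appeal to Lemma \ref{ess_sent_easy_1} by one to Lemma \ref{ess_sent_hard_1} (so that ``there is an $\X$ with $(\Phi_0,\X)\in\Q$ forcing $\chi$'' becomes ``the essential tree for $\chi$ has an infinite path on which $A$ does not appear''), and use $A\leq_T C$ freely to absorb $A$ into the oracle. The two assertions of the lemma are then the analogues, respectively, of Lemma \ref{ess_splits_easy_2} and of the path-construction part of Corollary \ref{ess_splits_easy_3}.

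For the complexity bound I would unwind $\btau\in\U$ as: for all $\Phi_q,\Phi_r\leq_\Q\Phi_0$ and all $m,m_1,m_2$ with $m_1\neq m_2$, if there are $\X_q,\X_r$ with $(\Phi_q,\X_q),(\Phi_r,\X_r)\in\Q$ forcing $\ang{e,\alpha}^{B\oplus C\oplus\generic}(m)\downarrow = m_1$ and $\downarrow = m_2$ respectively, then $(\Phi_q\cup\Phi_r)\setminus\Phi_0$ contains an axiom compatible with some component of $\btau$. The consequent, and the side condition $\Phi_q\leq_\Q\Phi_0$ (which already encodes ``$\Phi_q$ partially computes $B$ on $A$''), are recursive in $A\oplus B\oplus C$, hence in $B\oplus C$ since $A\leq_T C$. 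For the antecedent, the convergence sentence $\ang{e,\alpha}^{B\oplus C\oplus\generic}(m)\downarrow = m_1$ is $\Sigma^r_{\alpha+1}(B\oplus C)$ in our language, and, viewing it as a $\Pi^r_{\alpha+2}(B\oplus C)$ sentence, Lemma \ref{ess_sent_hard_1} shows that the existence of $\X_q$ with $(\Phi_q,\X_q)\in\Q$ forcing it is equivalent to the existence of a $j$ for which $T(\Phi_q,\ang{e,\alpha}^{B\oplus C\oplus\generic}(m)\downarrow = m_1,j)$ has an infinite path on which $A$ does not appear. By Lemma \ref{ess_sent_hard_2} that tree is a $\Pi^0_{\alpha+2}(B\oplus C)$ subtree of a recursively bounded recursive tree uniformly in the parameters, and --- exactly as in the proof of Lemma \ref{ess_sent_hard_2}, using $A\leq_T C$ to check ``a given node avoids $A$'' recursively --- its having an infinite path avoiding $A$ is a $\Sigma^0_{\alpha+3}(B\oplus C)$ condition; prefixing $\exists j$ keeps it $\Sigma^0_{\alpha+3}(B\oplus C)$. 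Doing the same for $\X_r$, the antecedent of the implication is $\Sigma^0_{\alpha+3}(B\oplus C)$, so the implication is $\Pi^0_{\alpha+3}(B\oplus C)$, and the outer universal quantifiers over $\Phi_q,\Phi_r,m,m_1,m_2$ preserve this. Uniformity is inherited from Lemmas \ref{ess_sent_hard_1} and \ref{ess_sent_hard_2}.

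For the second claim, Lemma \ref{ess_splits_hard_1} turns the hypothesis into: $\U$ has an infinite path on which $A$ does not appear. By the first part, $\U$ is a $\Pi^0_{\alpha+3}(B\oplus C)$ subtree of a recursively bounded recursive tree, so $(B\oplus C)^{(\alpha+4)}$ decides membership in $\U$ and, for every node $v$, decides whether $\U$ has an infinite path through $v$ avoiding $A$ --- the latter being $\Sigma^0_{\alpha+4}(B\oplus C)$ by the same computation (one extra jump, since $\U$ already sits one level above the trees used in its own definition), again using $A\leq_T C$. Hence $(B\oplus C)^{(\alpha+4)}$ builds such a path $Y$ node by node, at each step passing to an immediate successor in $\U$ above which an $A$-avoiding infinite path persists (one exists at every step because the root does, by Lemma \ref{ess_splits_hard_1}). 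Then $\X(Y)$ consists of reals recursive in $(B\oplus C)^{(\alpha+4)}$, omits $A$, and by the second clause of Lemma \ref{ess_splits_hard_1} yields $(\Phi_0,\X(Y))\in\Q$ with no $\ang{e,\alpha}$-splits below it; uniformity in $\Phi_0,e,\alpha,k$ is clear. The one step that needs care --- and where I expect the only real risk of error --- is the bookkeeping that produces the exponents $\alpha+3$ and $\alpha+4$: the ``go up one level'' passage from the $\Sigma^r_{\alpha+1}$ convergence sentence to a $\Pi^r_{\alpha+2}$ one, the extra level contributed by ``has an infinite path avoiding $A$'' over the $\Pi^0$-complexity of the tree itself, and the further jump in the second part because $\U$ lies one level above its own defining trees.
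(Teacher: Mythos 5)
Your first claim (the $\Pi^0_{\alpha+3}(B\oplus C)$ bound on $\U$) is argued exactly as in the paper: the same rewriting of essentiality with the infinite parts quantified out, the same replacement of ``$(\exists\X_q)(\Phi_q,\X_q)\forces\cdots$'' by ``some $T(\Phi_q,\cdot,j)$ has an infinite path on which $A$ does not appear'' via Lemma \ref{ess_sent_hard_1}, the same bump from the $\Sigma^r_{\alpha+1}$ convergence sentence to a $\Pi^r_{\alpha+2}$ one, and the same count giving a $\Sigma^0_{\alpha+3}$ antecedent, hence a $\Pi^0_{\alpha+3}$ matrix under the universal prefix. That part is fine.

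In the second claim there is a genuine slip, precisely at the level-counting you flagged. You have $(B\oplus C)^{(\alpha+4)}$ ``decide, for every node $v$, whether $\U$ has an infinite path through $v$ avoiding $A$,'' justifying this by the observation that this predicate is $\Sigma^0_{\alpha+4}(B\oplus C)$. Under the paper's convention a $\Sigma^0_{\alpha+4}(B\oplus C)$ predicate is only r.e.\ in $(B\oplus C)^{(\alpha+4)}$ once $\alpha$ is infinite (the shift by one only helps at finite levels), so it need not be decidable by that oracle, and your node-by-node greedy construction really does need a decision at each node: a successor above which $\U$ is infinite may still have the property that every infinite path above it has $A$ as the limit of some coordinate. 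Since the infinite-$\alpha$ case is the whole point of the lemma, the procedure as described overshoots to $(B\oplus C)^{(\alpha+5)}$. The paper avoids this with the two-phase search already used in Corollaries \ref{ess_sent_easy_3} and \ref{ess_sent_hard_3}: using only the $\Pi^0_{\alpha+3}$ questions ``is $\btau\in\U$ and is $\U$ infinite above $\btau$'' (decidable in $(B\oplus C)^{(\alpha+4)}$) together with the recursive-in-$C$ comparison of coordinates with $A$, search for a single node all of whose coordinates already disagree with $A$ and above which $\U$ is infinite; the search halts because an $A$-avoiding path exists, and from that node on one extends greedily using only ``infinite above,'' every resulting path automatically avoiding $A$. (Alternatively, your construction can be repaired by noting that at each node it suffices to \emph{semi}-decide, among the finitely many immediate successors, which one admits an $A$-avoiding path -- a positive answer is r.e.\ in $(B\oplus C)^{(\alpha+4)}$ and is guaranteed to appear -- but as written the appeal to outright decidability of a $\Sigma^0_{\alpha+4}$ predicate by $(B\oplus C)^{(\alpha+4)}$ is not valid.)
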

		
		\begin{proof}
			The strategy is similar to that of Lemmas \ref{ess_splits_easy_2} and \ref{ess_splits_easy_3}. Fix $\Phi_0,k,\ang{e,\alpha}$ and $\btau$. Then $\btau \in \U$ iff
		\begin{align*}
		(\forall \Phi_q,\Phi_r\leq_\Q \Phi_0)(\forall m,m_1, m_2)
		& [ (\exists \X_q,A\notin \X_q)((\Phi_q,\X_q)\forces \ang{e,\alpha}^{(B\oplus C \oplus \Phi_G)}(m)\downarrow =m_1)\\ 
		&\land (\exists \X_r,A\notin \X_r)((\Phi_r,\X_r) \forces \ang{e,\alpha}^{(B\oplus C\oplus \Phi_G)}(m) \downarrow = m_2) \\ 
		&\land m_1\neq m_2) \\
		& \rightarrow (\exists\ang{x,y,\sigma}) \text{ such that }\ang{x,y,\sigma}\in (\Phi_q\cup \Phi_r)\setminus \Phi_0\\
		&\hspace{8mm}\text{yet $\sigma$ is compatible with some }\tau\in\btau)]
		\end{align*}

	By Lemma \ref{ess_splits_hard_1} we can replace the existential quantifiers over reals by 
		\[
			(\exists j)U(\Phi_q,\ang{e,\alpha}^{(B\oplus C \oplus \Phi_G)}(m)\downarrow = m_1,j) \text{ has a path on which $A$ does not appear.}
		\]
	 (making the analogous changes for $\Phi_r$). Now, the formula "$\ang{e,\alpha}^{(B\oplus C \oplus \Phi_G)}(m)\downarrow = m_1$" can be rendered as a $\Sigma_{\alpha+1}^r(B\oplus C)$ formula in our forcing language so, increasing its level by $1$ to get a $\Pi^r(B\oplus C)$ formula, and applying Lemma \ref{ess_sent_hard_2}, the existential quantifier over reals is equivalent to a (uniformly given) $\Pi^0_{\alpha+2}(B\oplus C)$ tree having a path on which $A$ does not appear.
	
	We can decided whether the tree has such a path by answering the question: Does there exist a $j$ and a node on a uniformly given $\Pi_{\alpha+2}^0(B\oplus C)$ subtree of a recursively bounded tree, the coordinates of which all disagree with $A$, above which the tree is infinite? Which is a uniformly $\Sigma_{\alpha+3}^0(B\oplus C)$ question.
	
	This means we can express "$\btau$ is essential" as a formula with a prefix of universal quantifiers, then a uniformly given $\Pi_{\alpha+3}^0(B\oplus C)$ matrix, and so this property is $\Pi_{\alpha+3}^0(B\oplus C)$, as required.
	
	So, to find a "simple" $\X$, provided there is one, we need to be able to construct the tree $\U$ and find paths on it, on which $A$ does not appear. As $\U$ is a $\Pi^0_{\alpha+3}(B\oplus C)$ subtree of a recursively bounded recursive tree $(B\oplus C)^{(\alpha+4)}$ can decide whether a given $\btau$ is on the tree and if so whether the tree is infinite above that node. It can also decide whether the node has a coordinate which is an initial segment of $A$, and so can construct a the desired path provided one exists.
\end{proof} 

\begin{cor}
	Let $q\in \Q$ be a forcing condition and $\ang{e,\alpha}$ a $(B\oplus C)$-hyperarithmetic reduction. Then we can find an extension $r\in \Q$ of $q$ which diagonalizes against $\ang{e,\alpha}^{B\oplus C\oplus \Phi_G} = D$ i.e. $p$ either forces $\ang{e,\alpha}^{B\oplus C\oplus\Phi_G}$ is not total, or, there is an $x$ such that it forces this computation on $x$ to be convergent, yet not equal to $D(x)$.
	\label{diagonalising_hard}
\end{cor}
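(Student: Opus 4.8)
The proof will follow that of Corollary \ref{diagonalising_easy} almost verbatim, with $\P$ replaced by $\Q=\Q_{A,B}$, the parameter $C$ replaced by $B\oplus C$, the language $\infinitaryL$ replaced by $\infinitaryLB$, and the $\P$-lemmas replaced by their $\Q$-counterparts (Corollaries \ref{ess_sent_hard_4}, \ref{ess_sent_hard_3} and Lemmas \ref{ess_sent_hard_1}, \ref{ess_sent_hard_2}, \ref{ess_splits_hard_1}, \ref{ess_splits_hard_2}). The one real simplification is that a $\Q$-condition already keeps $A$ out of its infinite part and contains no axiom that applies to $A$ and miscomputes $B$, so the side-conditions that cluttered the easy case evaporate: we need only exhibit \emph{some} extension of $q$ in $\Q$.

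First, the sentence ``$\ang{e,\alpha}^{B\oplus C\oplus\Phi_G}$ is total'' is a $\Pi^r_{\alpha+2}(B\oplus C)$ sentence of $\infinitaryLB$, being $\forall x$ applied to the $\Sigma^r_{\alpha+1}(B\oplus C)$ sentence ``$\ang{e,\alpha}^{B\oplus C\oplus\Phi_G}(x)\downarrow$''. By Corollary \ref{ess_sent_hard_4} there is an $r\leq_\Q q$ deciding it. If $r$ forces non-totality, we are done. Otherwise $r$ forces totality. If some pair $s,t\leq_\Q r$ forms an $\ang{e,\alpha}$-split at an input $x$, then, the two disagreeing values lying in $\{0,1\}$, one of them differs from $D(x)$, and the corresponding one of $s,t$ is a $\Q$-extension of $q$ forcing the computation at $x$ to converge to a value $\neq D(x)$; it is the extension we want. (For the effectiveness needed when assembling the generic, one feeds a $\Pi^r_\beta(B\oplus C)$ witness of the forced $\Sigma$-sentence to Corollary \ref{ess_sent_hard_3}, replacing $s$ or $t$ by a condition whose new reals are recursive in a fixed jump of $B\oplus C$.)

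The remaining case -- $r$ forces totality but has no $\ang{e,\alpha}$-splits below it -- is the heart of the matter. As in Corollary \ref{diagonalising_easy}, $r$ then \emph{determines} $\ang{e,\alpha}^{B\oplus C\oplus\Phi_G}$: every extension of $r$ forcing a value at $x$ forces the same value $f(x)$, and forced totality makes $f$ total. I would show $f\leq_h B\oplus C$. Writing $r=(\Phi_r,\X_r)$, Lemma \ref{ess_splits_hard_2} gives a size-$|\X_r|$ set $\X'$ with $A\notin\X'$, all of whose members are recursive in $(B\oplus C)^{(\alpha+4)}$, such that $(\Phi_r,\X')\in\Q$ again has no $\ang{e,\alpha}$-splits below it; note $(\Phi_r,\X')$ is compatible with $r$. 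For each $x$, one searches over $y\in\{0,1\}$, $j\in\omega$, and finite \umtfs $\Phi_s$ with $(\Phi_s,\X')\leq_\Q(\Phi_r,\X')$ for a triple such that $T(\Phi_s,\ang{e,\alpha}^{B\oplus C\oplus\Phi_G}(x)\downarrow=y,j)$ -- a uniformly $\Pi^0_{\alpha+2}(B\oplus C)$ subtree of a recursively bounded recursive tree, by Lemma \ref{ess_sent_hard_2} -- has an infinite path on which $A$ does not appear, and outputs that $y$. By Lemma \ref{ess_sent_hard_1} such a path yields a $\Q$-condition $(\Phi_s,\X'')$ forcing the computation at $x$ to equal $y$; a common extension of $(\Phi_s,\X'')$ with $(\Phi_r,\X')$ forces the same, and since $(\Phi_r,\X')$ is split-free and compatible with $r$ it determines the same value as $r$, so $y=f(x)$. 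Forced totality (applied to the extension $(\Phi_r,\X_r\cup\X')$ of $r$) guarantees the search halts. Checking $(\Phi_s,\X')\leq_\Q(\Phi_r,\X')$ is recursive in $(B\oplus C)^{(\alpha+4)}$, and, since $A\leq_T B\oplus C$, whether the $\Pi^0_{\alpha+2}(B\oplus C)$ tree has an $A$-avoiding path is uniformly $\Sigma^0_{\alpha+3}(B\oplus C)$; so the search runs recursively in $(B\oplus C)^{(\alpha+4)}$, whence $f\leq_h B\oplus C$. We are in the hard case, so $D\nleq_h B\oplus C$ and $f\neq D$; choosing $x_0$ with $f(x_0)\neq D(x_0)$ and (by forced totality) an $r'\leq_\Q r$ forcing convergence at $x_0$ -- necessarily to $f(x_0)$, as $r$ has no splits -- gives the desired diagonalizing extension.

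The main obstacle is precisely this last case: it is where the complexity bounds for $\forces_\Q$ proved in Lemmas \ref{ess_sent_hard_2} and \ref{ess_splits_hard_2} get cashed in, and it is the reason the $\Q$-analogues of the $\P$-lemmas were worth establishing even though $\Q$ builds in the $A$-avoidance. The only bookkeeping genuinely new relative to the easy case is that ``infinite path'' must everywhere become ``infinite path on which $A$ does not appear'', costing one quantifier alternation; but since $A\leq_T B\oplus C$ in the hard case this extra quantifier is over a $(B\oplus C)$-recursive condition and is absorbed, so the determined function still lands at a fixed jump of $B\oplus C$ -- far below $\mathrm{hyp}(B\oplus C)$ -- which is all that is needed against $D\nleq_h B\oplus C$.
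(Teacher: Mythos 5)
Your proposal is correct and takes essentially the same route as the paper's proof, which likewise decides totality via Corollary \ref{ess_sent_hard_4}, handles a split directly, and in the split-free case passes to a condition with the same finite part whose reals are recursive in $(B\oplus C)^{(\alpha+4)}$ (Lemma \ref{ess_splits_hard_2}) and then searches for extensions forcing convergence, using the complexity bounds on $\forces_\Q$ to conclude the determined function is hyperarithmetic in $B\oplus C$ and hence not $D$. (One cosmetic caveat: the split values and the searched values $y$ need not lie in $\{0,1\}$ --- search over all $y\in\omega$ as in Corollary \ref{diagonalising_easy} --- but this changes nothing.)
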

\begin{proof}
	The idea is the same as in Corollary \ref{diagonalising_easy}. We assume there is a condition $q$ which forces totality and has no $\ang{e,\alpha}$-splits. We then move to the related condition $q'$ which has the same finite part as $q$, but the real parts are all recursive in $(B\oplus C)^{(\alpha+4)}$ as guaranteed by Lemma \ref{ess_splits_hard_2}. We then search for extensions of $q'$ forcing $\ang{e,\alpha}^{B\oplus C\oplus \Phi_G}(x)\downarrow = y$ and output $y$ if we find it. We only need to search for conditions with new reals recursive in $(B\oplus C)^{(\alpha + 1)}$ by Corollary \ref{ess_sent_hard_4}. This makes the search hyperarithmetic in $B\oplus C$, and so, if it computes $D$, then $B\oplus C$ computes $D$, contrary to our hypothesis.
\end{proof}
\begin{cor}
	The $\Sigma_2$ theory of the arithmetic degrees in $\leq, \sqcup$ and $0$ is decidable.
\end{cor}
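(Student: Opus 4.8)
The plan is to invoke the Jockusch--Slaman criterion recalled in Section~1: for any USL $\Lat$, the $\Sigma_2$ theory of $\Lat$ in the language $\leq,\sqcup,0$ is decidable as soon as facts~(1),~(2) and~(3) of Section~1 hold of $\Lat$. Specializing to $\Lat=\Deg_a$, fact~(1) is the initial-segment theorem of Simpson~\cite{Simpson1985}, and fact~(2) follows by taking the columns of a sufficiently generic Cohen real relativized to a suitable degree (cf.\ Odifreddi~\cite{Odifreddi1983}); both were noted in Section~1. So the entire content of the corollary is the extended Posner--Robinson theorem~(3) for the arithmetic degrees --- the analogue for $\Deg_h$ being exactly what Sections~3--5 establish --- and that is what I would prove.

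To obtain (3) for $\Deg_a$ I would re-run the forcing constructions of Sections~3--4 essentially verbatim, making the single systematic change of replacing every ``$C$-recursive (resp.\ $(B\oplus C)$-recursive) ordinal $\alpha$'' by ``finite ordinal $n$'' and every ``$\leq_h$'' by ``$\leq_a$''. This is justified because $d\leq_a c\sqcup g$ unwinds to $D\leq_T(C\oplus\Phi_G)^{(n)}$ for some $n<\omega$, so the reductions one must defeat are precisely the $\ang{e,n}$ with $n$ finite; at finite levels the forcing language $\infinitaryL$ degenerates to ordinary finitary arithmetic, so no genuinely infinitary apparatus is needed. In the easy case ($\a\nleq_a\c_i$) one works in $\P$ as in Section~3: here $\a\nleq_a\c_i$ says exactly that $A$ is not $\Delta^0_n(C_i)$ for any finite $n$, so Corollaries~\ref{ess_sent_easy_4} and~\ref{diagonalising_easy} apply at every finite level (the latter exploiting $\d_i\nleq_a\c_i$), and dovetailing their applications with the coding axioms $\ang{n,B(n),\alpha}$, $\alpha\subset A$, yields $B\leq_T A\oplus\Phi_G$, i.e.\ $\b\leq_a\a\sqcup g$. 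In the hard case ($\a\leq_a\c_i$, which by the hypothesis forces $\d_i\nleq_a\b\sqcup\c_i$) one picks a representative $C_i$ of $\c_i$ with $A\leq_T C_i$ --- possible since $C_i^{(n)}$ has the same arithmetic degree as $C_i$ --- and works in $\Q_{A,B}$ as in Section~4; Corollary~\ref{diagonalising_hard} defeats each $\ang{e,n}$, and because the search it carries out now lives at a \emph{finite} jump of $B\oplus C_i$ the function it pins down is arithmetic in $B\oplus C_i$, hence cannot equal $D_i$ since $\d_i\nleq_a\b\sqcup\c_i$. The clauses $g\nleq_a e_i$ come for free from genericity, the diagonalization that forces $\Phi_G\ne\{e\}^{E_i^{(n)}}$ being dense for each $e,n,i$. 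A pleasant bonus of the finite-ordinal specialization is that Section~5 becomes vacuous: its sole purpose was to keep the meaning of ``$C$-recursive ordinal'' stable under forcing by arranging that $\Phi_G$ preserve $\omega_1^C$, but every finite ordinal is $C$-recursive no matter what, so there is nothing to preserve.

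The step I expect to demand the most care is not any individual lemma --- each survives the substitution of ``finite ordinal'' for ``recursive ordinal'' intact, all the $\Pi^0_\alpha(\cdot)$ complexity bounds merely degenerating to finite-level bounds --- but the orchestration: assembling a single generic sequence that meets the requirement appropriate to each instance in an arbitrarily long finite list (which suffices for decidability, as remarked after the statement of (3)), when some instances are ``easy'' and naturally live in $\P$ while others are ``hard'' and naturally live in the suborder $\Q_{A,B}\subseteq\P$, all the while keeping the coding of $B$ through $A$ intact. Once this bookkeeping is in hand, the decidability of the $\Sigma_2$ theory of $\Deg_a$ falls out of the Jockusch--Slaman criterion.
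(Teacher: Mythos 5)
Your proposal is correct and follows essentially the same route as the paper: reduce to the Jockusch--Slaman criterion, note (1) and (2) are known for $\Deg_a$, and establish the extended Posner--Robinson theorem by running the Sections 3--4 machinery at finite levels only (easy cases in $\P$, hard cases in $\Q_{A,B}$ with $A\leq_T C_i$, coding $B$ via $A$ throughout, and observing that the diagonalization procedures are arithmetic in the parameters when $\alpha$ is finite, so that $\omega_1$-preservation is unnecessary). The only divergence is the clause $g\nleq_a e_i$, which you handle by an explicit dense diagonalization while the paper appeals to the existence of continuum many generics; both work, and the paper itself remarks that the forcing argument is an alternative.
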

\begin{proof}
	By Jockusch and Slaman, it suffices to establish the extended Posner-Robinson theorem. Choose representatives for each of the arithmetic degrees $\a,\b,\c_i,\d_i$ such that if $i$ is a hard case then $A\leq_T C_i$. 
	
	One constructs a sequence (beginning with $(\emptyset,\emptyset)$) of conditions $(\Phi_i,\X_i)$ maintaining that $\Phi_i$ partially computes $B$ given $A$ and that $A\notin \X_i$. Hence, our sequence will live in both $\P$ and $\Q$. We dovetail together all the $\Pi^r_n$ formulas for $n<\omega$, and relevant arithmetic reductions against which we will diagonalize, and successively extend our sequence in the corresponding notion of forcing ($\P$ for easy cases, $\Q$ for hard) deciding the next sentence, and diagonalizing against the reductions. This ensures that forcing equals truth for all arithmetic sentences of our forcing languages. By Corollary \ref{ess_sent_easy_4} and \ref{ess_splits_easy_3} we can do this for the easy cases without adding $A$ to the infinite part or incorrect axioms applying to $A$ to the finite part, as in the easy case $A$ is not arithmetic in $C$.
	
	Between these stages we add the axiom $(n,B(n),\alpha)$ for the least $n$ so that there is no axiom applying to $A$ of this form, and $\alpha$ is the shortest initial segment of $A$ permitted to enter. As we are maintaining $A\notin \X_i$ this is possible, and we never add any incorrect axioms applying to $A$. This ensures that $A \oplus \Phi_G \geq_a B$ as required.
	
	As we have diagonalized against each of the listed arithmetic reduction and forcing equals truth for each arithmetic formula, we know for each index $e$ and natural number $n$ that $\ang{e,n}^{C_i\oplus \Phi_G}\neq D_i$ in the easy case, and $\ang{e,n}^{B\oplus C_j\oplus \Phi_G}\neq D_j$ in the hard case (this relies on the observation that our algorithms in Corollaries \ref{diagonalising_easy} and \ref{diagonalising_hard} are arithmetic in the parameters when $\alpha$ is finite). For avoiding the ideals below $\e_i$, we could do more forcing arguments. For brevity though, we appeal to the fact that there are continuum many generics, even with our coding of $B$.
\end{proof}
\section{Preserving $\omega_1^{CK}$}

	To move the previous result to the hyperarithmetic setting we need to show that we can also preserve $\omega_1^{CK}$ relativised to $C$ in the easy case, and $B\oplus C$ in the hard case. We use a method suggested by Slaman: forcing over nonstandard models of ZFC.

\begin{defn}
	Let $\M$ be an $\omega$-model of ZFC. Then $\P_\M$ is the collection of all Kumabe-Slaman conditions $p=\condition{p}\in \P$ such that every real $X\in \X_p$ is in $\M$. Because $\M$ is an $\omega$-model this definition is meaningful. Similarly $\Q_{\M}$ is the restriction of the forcing $\Q = \Q_{A,B}$ to $\M$. Extension is defined as for $\P$ and $\Q$ respectively. Note that $\P_\M\in \M$ for any such $\M$, and $\Q_{\M}\in \M$ if $A$ and $B$ are also members of $\M$.
\end{defn}

\begin{nota}
	Given a Kumabe-Slaman condition $p=\condition{p}\in \P$ we call $(\Phi_p,\X_p \cap (2^\omega)^{\M})$ the restriction of $p$ to $\M$ and denote it $p\restricted_\M$.  
\end{nota}

\subsection{Easy case}

Suppose $C\in\c$ corresponds to an easy case and $\M$ is a countable $\omega$-model of ZFC that omits $\omega_1^{C}$ and $A$ but contains $C$. The existence of such a model is shown in \cite{ShoreSlaman2016}.

\begin{lem}
	If $(p_i)_{i=0}^\infty$ is an $\M$-generic sequence (i.e. the sequence meets all of the dense subsets of $\P_\M$ that appear in $\M$), then $\Phi_G$ preserves $\omega_1^{C}$, that is to say, $\omega_1^{C\oplus \Phi_G} = \omega_1^{C}$.
	\label{preserve_omega1_easya}
\end{lem}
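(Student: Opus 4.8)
The plan is to treat $\Phi_G$ as the object adjoined by forcing over the $\omega$-model $\M$ and to leverage the fact that $\M$ omits $\omega_1^{C}$. The inequality $\omega_1^{C}\le\omega_1^{C\oplus\Phi_G}$ is immediate from $C\le_T C\oplus\Phi_G$, so the work is to show that every ordinal recursive in $C\oplus\Phi_G$ is already below $\omega_1^{C}$; taking suprema then yields $\omega_1^{C\oplus\Phi_G}\le\omega_1^{C}$ and hence equality.

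First I would pass to the forcing extension. Since $\P_\M\in\M$ and, by hypothesis, the descending sequence $(p_i)$ meets every dense subset of $\P_\M$ lying in $\M$, the filter $G=\{p\in\P_\M : p\ge_\P p_i \text{ for some } i\}$ is $\M$-generic; because every real named in the infinite part of a condition of $\P_\M$ belongs to $\M$, the filter $G$ and the functional $\Phi_G=\bigcup_i\Phi_{p_i}$ are interdefinable over $\M$, so $\M[G]=\M[\Phi_G]$. The forcing theorem, relativised to $\M$, gives $\M[\Phi_G]\models$ ZFC; forcing adds no ordinals, so $\M[\Phi_G]$ has the same ordinals — and therefore the same well-founded part, of the same ordinal height — as $\M$, and since its $\omega$ is still the standard $\omega$ it is again an $\omega$-model. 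Moreover $C\in\M\subseteq\M[\Phi_G]$ and $\Phi_G\in\M[\Phi_G]$, so $C\oplus\Phi_G\in\M[\Phi_G]$. That $\M$ omits $\omega_1^{C}$ means precisely that the ordinal height of its well-founded part is $\le\omega_1^{C}$ (and $=\omega_1^{C}$ once $C\in\M$).

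Next I would establish the absoluteness step being invoked: if $N$ is any $\omega$-model of ZFC and $X\in N$, then every genuine $X$-recursive ordinal $\alpha$ is $<\mathrm{ht}(\mathrm{WF}(N))$. Indeed $\alpha$ is the order type of some $X$-recursive linear order $L$ which is genuinely a well-order; $L$ is coded by a real of $N$, and since $L$ has no genuine infinite descending sequence it has none inside $N$, so $N\models$ ``$L$ is a well-order'' and $N$ furnishes an ordinal $\beta\in\mathrm{Ord}^{N}$ together with an order-isomorphism $g\colon L\to\beta$ with $g\in N$. If $\beta$ lay in the ill-founded part of $N$, an $\in$-descending chain below $\beta$ — which is a genuine $\in$-descending chain — would pull back through $g$, which is genuinely order preserving (an arithmetic fact about the reals $g$ and $L$), to a genuine $<_L$-descending chain, contradicting well-foundedness of $L$. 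So $\beta\in\mathrm{WF}(N)$, hence $\beta<\mathrm{ht}(\mathrm{WF}(N))$, and since $g$ is a genuine isomorphism $\beta=\alpha$. Applying this with $N=\M[\Phi_G]$ and $X=C\oplus\Phi_G$ shows that every $(C\oplus\Phi_G)$-recursive ordinal is $<\mathrm{ht}(\mathrm{WF}(\M[\Phi_G]))=\mathrm{ht}(\mathrm{WF}(\M))\le\omega_1^{C}$, which completes the argument.

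I expect the main obstacle to be the interface with the ill-founded model $\M$: checking that meeting the dense sets of $\M$ does produce a genuinely $\M$-generic filter, that the forcing theorem and the ``no new ordinals'' lemma go through verbatim over $\M$ for the set forcing $\P_\M\in\M$ (Kumabe--Slaman forcing is not separative, but this is irrelevant here), and the bookkeeping identifying $\M[G]$ with $\M[\Phi_G]$. One should also note that the argument uses nothing about the sequence $(p_i)$ beyond $\M$-genericity, so it will dovetail cleanly with the earlier requirements of coding $B$ into the join and diagonalising against $D$. The remaining ingredients — arithmetic absoluteness between $\omega$-models and $V$, and the pull-back-a-descending-chain trick — are routine.
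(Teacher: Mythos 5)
Your proposal is correct and follows essentially the same route as the paper: pass to the forcing extension $\M[\Phi_G]$, an $\omega$-model of ZFC with the same ordinals as $\M$ (hence the same externally well-founded part, whose height is $\omega_1^{C}$ because $\M$ contains $C$ but omits $\omega_1^{C}$), and note that any well-order recursive in $C\oplus\Phi_G$ lies in this model, is seen there to be isomorphic to one of its ordinals, and that ordinal must be externally well-founded, hence below $\omega_1^{C}$. Your write-up merely makes explicit the forcing-theorem bookkeeping over the ill-founded $\M$ and the pull-back-of-descending-chains absoluteness argument that the paper leaves implicit.
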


\begin{proof}
	Suppose $<$ is a well-order recursive in $C \oplus \Phi_G$. As $\M$ is an $\omega$-model and the ordinals of $\M[\Phi_G]$ are the same as that of $\M$, then $\M[\Phi_G]$ is an $\omega$-model containing $C \oplus \Phi_G$ and is closed under Turing reduction. Consequently, $\M[\Phi_G]$ contains an isomorphic copy of $<$, and as $<$ is externally well-founded, it is well-founded in $\M[\Phi_G]$.
	
	Hence, $\M[\Phi_G]$ thinks $<$ is isomorphic to an ordinal $\alpha$, and as $<$ is externally well-founded, so too is $\alpha$. However, the only externally well-founded ordinals in $\M[\Phi_G]$ are those that are also externally well-founded ordinals in $\M$, and by hypothesis, those are precisely the ordinals less than $\omega_1^C$. Therefore, $<$ has height less than $\omega_1^C$ and so $\omega_1^{C\oplus \Phi_G} = \omega_1^{C}$ as required. 
\end{proof}

\begin{lem}
	Suppose $\mathcal{D} \subseteq\P_{\M}$ is dense, and also an element of $\M$. Further suppose $p=\condition{p}\in \M$ is a forcing condition. Then we can find an extension $q\in \P_{\M}$ of $p$ meeting $\mathcal{D}$ such that $q$ adds no new computations applying to $A$, and $A$ is not in the infinite part of $q$ (as $A\notin \M$).
	\label{preserve_omega1_easyb}
\end{lem}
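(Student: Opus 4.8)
The plan is to reduce the lemma to finding a single initial segment $\tau\subset A$ that is ``safe'', and to extract such a $\tau$ from the hypothesis $A\notin\M$ by an argument carried out inside $\M$.

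Working in $\M$, call a string $\tau\in 2^{<\omega}$ \emph{generous} if there is a condition $q\in\P_\M$ with $q\leq_\P p$, $q\in\mathcal{D}$, and such that every axiom $\ang{x,y,\sigma}\in\Phi_q\setminus\Phi_p$ has $\sigma$ incompatible with $\tau$; when $\tau$ is generous, fix (by choice in $\M$) such a witness $q_\tau$. Let $\mathcal{B}$ be the set of non-generous strings. Since any string incompatible with $\tau'$ is incompatible with every $\tau\supseteq\tau'$, generosity passes to extensions, so non-generosity passes to initial segments; thus $\mathcal{B}$ is a subtree of $2^{<\omega}$, and $\mathcal{B}$ (together with the assignment $\tau\mapsto q_\tau$) is an element of $\M$. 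The reduction is: it is enough to find some $\tau\subset A$ with $\tau\notin\mathcal{B}$. Given such a $\tau$, put $q=q_\tau$; then $q\in\P_\M$ so $A\notin\X_q$ (as $A\notin\M$), and every new axiom $\ang{x,y,\sigma}\in\Phi_q\setminus\Phi_p$ has $\sigma$ incompatible with $\tau\subset A$ and hence incompatible with $A$, so $\sigma\not\subseteq A$ and $q$ adds no computation applying to $A$. Since also $q\leq_\P p$ and $q\in\mathcal{D}$, such a $q$ is exactly what the lemma asks for.

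So I must show $A$ is not a branch of $\mathcal{B}$. Suppose it were, i.e. $A\restricted n\in\mathcal{B}$ for all $n$. Each $A\restricted n$ is a finite string, hence an element of $\M$, and ``$\tau\in\mathcal{B}$'' is absolute between $\M$ and $V$: the defining existential quantifier ranges over conditions of $\P_\M$ (the same objects in $\M$ and in $V$), and the matrix is membership in $\mathcal{D}\in\M$ together with arithmetic conditions on codes and on the finitely many reals of the condition, all absolute for the $\omega$-model $\M$. Consequently $\M$ sees that $\mathcal{B}$ has nodes of every length, so $\M\models{}$``$\mathcal{B}$ is an infinite, finitely-branching tree''. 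By K\"onig's lemma, a theorem of ZFC, $\M$ then contains an infinite branch $f$ of $\mathcal{B}$; since $\M$ is an $\omega$-model, $f$ really is a function $\omega\to 2$ with $f\restricted n\in\mathcal{B}$ for every $n$. Now form $p^{*}=(\Phi_p,\X_p\cup\{f\})$: since $f\in\M$ we have $p^{*}\in\P_\M$, and $p^{*}\leq_\P p$ (the finite part is unchanged, so the only clause of $\leq_\P$ with content, $\X_p\subseteq\X_{p^{*}}$, clearly holds). By density of $\mathcal{D}$ in $\P_\M$ pick $q^{*}\in\mathcal{D}$ with $q^{*}\leq_\P p^{*}$. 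The third clause of $\leq_\P$ forbids $q^{*}$ from having any axiom in $\Phi_{q^{*}}\setminus\Phi_p$ whose string is an initial segment of $f$; as these finitely many strings are then incompatible with $f$, each disagrees with $f$ at some position, so all of them are incompatible with $f\restricted N$ once $N$ exceeds those finitely many positions. But then $q^{*}$ witnesses that $f\restricted N$ is generous, contradicting $f\restricted N\in\mathcal{B}$. Hence $A$ is not a branch of $\mathcal{B}$, which by the reduction completes the proof.

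I expect the main obstacle to be the passage ``$\M$ believes $\mathcal{B}$ is an infinite tree'' $\Rightarrow$ ``$\M$ contains a genuine branch of $\mathcal{B}$'': the model $\M$ is ill-founded (its well-founded part has height $\omega_1^{C}$), so one cannot argue through external well-foundedness of $\mathcal{B}$, nor simply invoke Mostowski absoluteness. The resolution is to apply K\"onig's lemma \emph{internally} to $\M$ and to use that, $\M$ being an $\omega$-model, the branch it produces and the predicate ``$\tau\in\mathcal{B}$'' are computed correctly; the branch can then legitimately be placed into the infinite part of a condition, which is the move that drives the contradiction.
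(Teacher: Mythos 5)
Your proof is correct and is essentially the paper's argument in a lightly repackaged form: your tree $\mathcal{B}$ of non-generous strings is the paper's tree $V(\Phi_p,\mathcal{D},k)$ of essential tuples with the $\X_p$-coordinates absorbed by requiring witnesses to extend $p$ itself, and the key moves — applying K\"onig's lemma inside $\M$, placing the resulting branch into the infinite part of a condition, and using density of $\mathcal{D}$ to contradict essentiality/non-generosity — coincide with the paper's. The absoluteness points you spell out are exactly what justifies the paper's appeal to ``K\"onig's Lemma inside $\M$,'' so there is no gap.
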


\begin{proof}
	Suppose we can't meet $\mathcal{D}$ below $p$ without adding new computations to $A$. We will show that $\mathcal{D}$ is not dense. Let $\btau$ be a finite sequence of finite binary strings all of the same length. We say $\btau$ is essential to meeting $\mathcal{D}$ over $\Phi_p$ if for every condition $r=\condition{r}\in \mathcal{D}$ with $r <(\Phi_p,\emptyset)$, there is an axiom $\ang{x,y,\sigma}\in \Phi_r\setminus \Phi_p$ with $\sigma$ compatible with some component of $\btau$.
	
	Let $V(\Phi_p,\mathcal{D},k)$ denote the set of $\btau$ of length $k$ which are essential to meeting $\mathcal{D}$ over $\Phi_p$ and note $V(\Phi_p,\mathcal{D},k)\in \M$ for all $k$. Let $k = |\X_p|$ and enumerate $\X_p = (X_1,\ldots,X_k)$. We claim $\btau_l = (X_1\restricted l, \ldots, X_k\restricted l, A\restricted l) \in V(\Phi_p,\mathcal{D},k+1)$ for each $l$. To see this, suppose $r=\condition{r}\in \P_\M$ properly extends $(\Phi_p,\emptyset)$ and meets $D$. If $r$ has no new axioms applying to any of $X_1,\ldots,X_k,A$, then $r$ extends $p$, contradicting our hypothesis that we can't meet $D$ below $p$ without adding axioms about $A$. Hence, such an $r$ has a new axiom $\ang{x,y,\sigma}$ applying to one of $X_1,\ldots,X_k,A$ and so $\sigma$ is compatible with the corresponding component of $\btau_l$, showing $\btau_l$ is essential.
	
	Consequently, $V(\Phi_p,\mathcal{D},k+1)$ is an infinite finitely-branching tree that is an element of $\M$. As $\M$ is an $\omega$-model it sees that $V(\Phi_p,\mathcal{D},k+1)$ is infinite and finitely-branching, so by K\"onig's Lemma inside $\M$, $V(\Phi_p,\mathcal{D},k+1)$ has a branch $Y$ in $\M$. This branch corresponds to reals $\X(Y)$ which are also in $\M$ and so $q = (\Phi_p,\X_p\cup \X(Y))\in\P_\M$ and extends $p$.	

	As $\mathcal{D}$ is dense there is some extension $r=\condition{r}$ of $q$ in $\mathcal{D}$. Such an $r$ extends $(\Phi_p,\emptyset)$ so by the definition of essentiality, there is some axiom $\ang{x,y,\sigma}\in \Phi_r\setminus \Phi_p$ with $\sigma$ compatible with some component of each element of $Y$. Therefore, $\sigma$ is an initial segment of some $X\in \X(Y)$, but the axiom $\ang{x,y,\sigma}$ can only be in an extension of $q = (\Phi_p,\X_p \cup \X(Y))$ if it was already in $\Phi_p$, a contradiction. Therefore, $q$ has no extensions in $\mathcal{D}$ and so $\mathcal{D}$ is not dense. 
\end{proof}

\subsection{Hard case}
	If $A\leq_h C$, then we can't omit $A$ from $\omega$-models of ZFC containing $C$. Consequently, we can't rely on the fact that $A\notin \M$ to prevent us from adding $A$ to the infinite part of a condition. Indeed, if $\M$ contains $A$, then $\mathcal{D} = \{p \in \P : A\in \X_p\}$ is a dense subset of $\P_\M$ which is also a member of $\M$. Consequently, we have to use a different forcing.
	
	Of course given what has come before, we want to use $\Q_\M$. Any $\omega$-model which contains $C$, must contain $A$ in the hard case, and consequently, $\Q_\M$ is an element of such a model iff $B$ is. Therefore, in the hard case, we pick an $\omega$-model $\M$ which contains $C$ and $B$, yet omits $\omega_1^{B\oplus C}$, again this follows from\cite{ShoreSlaman2016}.
	
	Once one has such a model, the argument for any set forcing preserving the least omitted ordinal (in this case $\omega_1^{B\oplus C}$) is the same as above. By definition of the forcing we need not worry about interfering with our coding procedure.
	
\subsection{The compatibility lemma}

	For a condition $p=\condition{p}$ in either of the restricted forcings $\P_\M$ and $\Q_\M$, it is still a fact that no extension of $p$ (in the relevant forcing) can add new axioms about members of $\X_p$. However, as the class of reals from which we can pick members of $\X_p$ from is highly restricted, we want to know we can meet dense sets without adding axioms applying to reals not appearing in the models (to ensure compatibility with the unrestricted forcings).
	
	\begin{lem}	
		Suppose $C$ is an easy case, then for every countable $\omega$-model of ZFC $\M$ omitting $\omega_1^C$ and $A$ yet containing $C$, every finite set of reals $\X$, every condition $p=\condition{p}\in \P_{\M}$, and every dense $\mathcal{D}\subseteq \P_\M$ which is also an element of $\M$, there is an extension $r\in\P_\M$ of $p$ meeting $\mathcal{D}$ which adds no new computations to any member of $\X$. 
			
		On the other hand if $C$ is a hard case, then for every countable $\omega$-model of ZFC $\M$ omitting $\omega_1^{B\oplus C}$ yet containing $B$ and $C$ (and, consequently, $A$), every finite set of reals $\X$ not containing $A$, every condition $q\in \Q_\M$, and every dense $\mathcal{D}\subseteq \Q_\M$ which is also an element of $\M$, there is an extension $r\in\Q_\M$ of $q$ meeting $\mathcal{D}$ which adds no new computations to any member of $\X$.
		\label{compataibilty_lemma}
	\end{lem}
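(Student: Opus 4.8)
The plan is to adapt the essential‑tuple argument of Lemma~\ref{preserve_omega1_easyb}, now carrying along a finite set of reals that we wish to leave undisturbed, and to run the easy and hard cases in parallel. So fix an easy case $C$, a countable $\omega$-model $\M\models$ ZFC omitting $\omega_1^C$ and $A$ but containing $C$, a finite set of reals $\X=\{Z_1,\ldots,Z_n\}$, a condition $p=\condition{p}\in\P_\M$, and a dense $\D\subseteq\P_\M$ with $\D\in\M$ (note $p,\D\in\M$ since $\P_\M\in\M$). Suppose for contradiction that every $r\in\D$ with $r\leq_\P p$ adds a new computation applying to some member of $\X$. I would then call a tuple $\btau$ of binary strings of equal length \emph{essential to meeting $\D$ below $p$} if every $r=\condition{r}\in\D$ with $r\leq_\P p$ contains an axiom $\ang{x,y,\sigma}\in\Phi_r\setminus\Phi_p$ with $\sigma$ compatible with some component of $\btau$, and let $V(\Phi_p,\D,k)$ collect such $\btau$ of length $k$; as in the body of the paper being essential is downward closed, so $V(\Phi_p,\D,k)$ is a subtree of a recursively bounded recursive tree. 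Since $V(\Phi_p,\D,k)$ is defined from the parameters $\Phi_p,\D,p\in\M$ by a formula whose only unbounded quantifier ranges over $\D$ (all of whose members lie in $\M$) with an arithmetic matrix, it is an element of $\M$ and, because $\M$ is an $\omega$-model and that formula is absolute, coincides with its externally computed version.

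The first key step is to check that $\btau_\ell:=(Z_1\restricted\ell,\ldots,Z_n\restricted\ell)\in V(\Phi_p,\D,n)$ for every $\ell$: given $r\in\D$ with $r\leq_\P p$, the contradiction hypothesis yields some $i$ and $x$ with $\Phi_r(Z_i)(x)$ defined and $\Phi_p(Z_i)(x)$ undefined, and by use-monotonicity together with the consistency condition on Turing functionals this forces an axiom $\ang{x,y,\sigma}\in\Phi_r\setminus\Phi_p$ with $\sigma\subset Z_i$, hence compatible with the $i$-th component of $\btau_\ell$. So $V(\Phi_p,\D,n)$ is infinite, whence $\M$ proves by K\"onig's lemma that it has a branch $Y\in\M$; writing $\X(Y)=\{W_1,\ldots,W_n\}\subseteq\M$ for the componentwise union, $q:=(\Phi_p,\X_p\cup\X(Y))$ lies in $\P_\M$ and extends $p$, so by density there is an $r\in\D$ with $r\leq_\P q\leq_\P p$. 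Since every level of $Y$ is essential, for each $\ell$ some axiom of $\Phi_r\setminus\Phi_p$ is compatible with a component of $Y(\ell)$; as $\Phi_r\setminus\Phi_p$ is finite, pigeonholing over axioms and coordinates produces a fixed axiom $\ang{x_0,y_0,\sigma_0}\in\Phi_r\setminus\Phi_p$ and an $i$ with $\sigma_0\subset W_i$. But $W_i\in\X_q$ and $r\leq_\P q$, so $\Phi_p(W_i)(x_0)=\Phi_q(W_i)(x_0)=y_0$ is witnessed by some $\ang{x_0,y_0,\tau}\in\Phi_p$; as $\tau,\sigma_0$ are both initial segments of $W_i$ they are compatible, so the consistency condition (applied inside $\Phi_r\supseteq\Phi_p$) gives $\sigma_0=\tau$, whence $\ang{x_0,y_0,\sigma_0}\in\Phi_p$, a contradiction. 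Thus $q$ has no extension in $\D$, contradicting density of $\D$; this settles the easy case.

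For the hard case I would run exactly the same argument with $\P,\P_\M$ replaced by $\Q,\Q_\M$ (so $\M$ also contains $B$, hence $A$, omits $\omega_1^{B\oplus C}$, and $\X$ does not contain $A$), with one extra wrinkle: the branch $Y$ that $\M$ produces must avoid $A$, for otherwise $A\in\X(Y)$ and the auxiliary condition $(\Phi_q,\X_q\cup\X(Y))$ is not a $\Q_\M$-condition. This is handled by noting that, since each $Z_i\neq A$, there is an $\ell_0$ past which every coordinate of $\btau_{\ell_0}$ is incompatible with $A$, and $\btau_\ell$ for $\ell\geq\ell_0$ witnesses that the essential tree is infinite above $\btau_{\ell_0}$. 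Hence $\M$ can first locate a node $\beta$ in the tree all of whose coordinates are incompatible with $A$ and above which the tree is infinite (such a $\beta$ exists, and $\M$ sees this because the tree and the statement ``infinite above $\beta$'' are absolute), and then apply K\"onig's lemma inside $\M$ to the subtree of extensions of $\beta$. Every coordinate of the resulting branch extends the corresponding coordinate of $\beta$, hence stays incompatible with $A$, so $A\notin\X(Y)$ and $(\Phi_q,\X_q\cup\X(Y))\in\Q_\M$; the rest is unchanged.

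The main obstacle I expect is the absoluteness bookkeeping rather than new combinatorics: one must be careful that the essential tree computed inside $\M$ is the genuine one, that $\M$ therefore correctly sees it as infinite, and — in the hard case — that $\M$ can actually produce the $A$-avoiding node $\beta$; all of this rests on $\M$ being an $\omega$-model and on the relevant sets being definable over $\M$ with only bounded and $\D$-bounded quantifiers over arithmetic matrices. Finally, this lemma is exactly what lets one interleave the main construction in $\P$ (resp.\ $\Q$) with meeting all $\M$-dense sets: at a stage with current condition $p_i$ one applies the lemma to $p_i\restricted_\M$ and $\X:=\X_{p_i}\setminus(2^\omega)^\M$ (adjoining $A$ in the easy case) to get $r\in\D$, and replaces $p_i$ by $(\Phi_r,\X_r\cup\X_{p_i})$, which extends $p_i$, has $\M$-restriction equal to $r\in\D$, and adds no new axioms about the reals outside $\M$ that were already in play.
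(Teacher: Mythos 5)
Your proof is correct and takes essentially the same route as the paper's: the essential-tuple tree $V(\Phi_p,\D,k)$ is an element of $\M$, is infinite under the contradiction hypothesis because the restrictions of the members of $\X$ are essential, and K\"onig's lemma applied inside $\M$ yields a branch whose reals are adjoined to the infinite part, after which density of $\D$ produces the contradiction. Your two refinements --- defining essentiality relative to the full condition $p$ rather than $(\Phi_p,\emptyset)$, and in the hard case passing to a node of the tree all of whose coordinates are incompatible with $A$ before extracting the branch, so that $A\notin\X(Y)$ and the auxiliary condition genuinely lies in $\Q_\M$ --- only tighten points the paper treats briefly (the latter is glossed over by ``the proof from there is the same''), and otherwise the arguments coincide.
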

	
	\begin{proof}
		For the first claim fix such an $\M$, an $\X$, a $p\in\P_\M$, and a $\D\in\M$. If any $X\in \X$ is also an element of $\M$, then the condition $p' = (\Phi_p, \X_p \cup (\X\cap \M))\in \P_\M$ extends $p$ and can add no new computations. By the density of $\D$ we can meet $\D$ below $p'$ adding no computations to any $X\in \X\cap \M$, so we assume that $\X$ is disjoint from $\M$.
		
		Now suppose that every extension of $p$ in $\D$ adds a new axiom applying to some $X\in \X$. As in Lemma \ref{preserve_omega1_easyb} consider the tree $V(\Phi_p,\D,|\X|)$ of essential $\btau$ of length $|\X|$. By assumption $\btau_l = (X_1\restricted l,\ldots,X_n\restricted l)$ is essential to meeting $\D$ below $p$ (where $\X = \{X_1,\ldots X_n\})$. Consequently, $V(\Phi_p,\D,|\X|)$ is a finitely branching tree in $\M$ which is infinite. Hence, it has a path $Y$ in $\M$ corresponding to reals $\X(Y)$ also in $\M$.
		
		We claim $(\Phi_p, \X(Y))$ has no extensions in $\D$. If it did, such an $r$ has a new axiom compatible with some component of each member of $Y$. Consequently, $r$ has a new axiom applying to some $X\in \X(Y)$ and so $r$ can't extend $(\Phi_p,\X(Y))$ a contradiction.
		
		The proof for the second claim in the lemma is similar: as we are assuming $\X$ does not contain $A$, we can still use the same trick as before to assume $\X$ is disjoint from $\M$. The proof from there is the same.
	\end{proof}

\section{Bringing it all together}
	Fix representatives $A\in \a$ and $B\in \b$. Then for each $\c_i,\d_i$ pair, pick representatives $C_i$ and $D_i$, with the requirement that if $i$ is a hard case, then $A\leq_T C_i$. Then, for each easy case $i$, fix a countable $\omega$-model $\M_i$ of ZFC containing $C_i$ yet omitting $\omega_1^{C_i}$ and $A$, and for each hard case $j$, fix a countable $\omega$-model $\M_j$ of ZFC containing $B$ and $C_i$ yet omitting $\omega_1^{B\oplus C_i}$.
	
	Now we need to show that we can construct a generic for all the forcings simultaneously. We can't construct a single generic sequence, because we may add a real $X$ to the infinite part at some point, where that $X$ is not a member of some $\M_i$. To work around this we use the Compatibility Lemma.
	
	We will have one master sequence $\{(\Phi_n,\X_n)\}_{n=0}^\infty$ which will live in $\P$ the unrestricted forcing. We will maintain that $A\notin \X_n$, $\Phi_n$ partially computes $B$ on input $A$ and $p_{n+1}\leq_\P p_n$. These three conditions will guarantee that our sequence is also a descending sequence of $\Q$ conditions. The master sequence will be used to decide all the sentences of the languages $\L^r_{\omega_1,\omega}(C_i)$ and $\L^r_{\omega_1,\omega}(B\oplus C_j)$ for easy cases $i$ and hard cases $j$. We will also diagonalize against computing $D_i$. This part is the same as the argument for the arithmetic degrees, and Corollaries \ref{ess_sent_easy_4},\ref{ess_sent_hard_4}, \ref{diagonalising_easy} and \ref{diagonalising_hard} say we can build such a sequence.
	
	To meet the dense sets in one of our $\omega$-models $\M_i$, we consider the sequence of restricted conditions $(\Phi_n,\X_n)\restricted \M_i$. Certainly at some stage of the construction, we can extend the condition $(\Phi_n,\X_n)\restricted \M_i$ in the corresponding forcing living in $\M_i$ to meet a dense set, however, we can do better. 
	
	By lemma \ref{compataibilty_lemma}, we can extend $(\Phi_n,\X_n)\restricted \M_i$ in $\M_i$ to meet $\D$, without adding any axioms to $A$ or any member of $\X_n$, nor adding $A$ to the infinite part. Call such an extension $q=\condition{q}$. Then we would define the next member of the master sequence to be $p_{n+1}=(\Phi_q,\X_{q}\cup \X_{p_{n}})$. By hypothesis $p_{n+1}\leq_\P p_n$, $p_{n+1}$ partially computes $B$ given $A$, and doesn't have $A$ in the infinite part. Thus, we can continue coding, and have maintained our invariants.
	
	Thus, our master sequence $(\Phi_n,\X_n)$ decides each sentence of each $\L^r_{\omega_1,\omega}(C_i)$ and $\L^r_{\omega_1,\omega}(B\oplus C_j)$, and diagonalizes against every $C_i$ or $B\oplus C_j$-hyperarithmetic reduction, depending on the case. The restricted sequence $(\Phi_n,\X_n)\restricted \M_i$ is $\M_i$ generic, and corresponds to the same generic object as for the master sequence. Hence, $\Phi_G$ preserves each $\omega_1^{C_i}$ and $\omega_1^{B\oplus C_j}$, which completes the proof of the extended Posner-Robinson theorem.

\pagebreak

\end{document}